\renewcommand{\MultipleCiteKeyWarning}[2]{}
\newtheorem{theorem}{Theorem}
\newtheorem{lemma}[theorem]{Lemma}
\newtheorem{corollary}[theorem]{Corollary}
\newtheorem{proposition}[theorem]{Proposition}
\theoremstyle{definition}
\newtheorem{remark}[theorem]{Remark}
\newtheorem{definition}[theorem]{Definition}
\newcommand{\eref}[1]{(\ref{e.#1})}
\newcommand{\tref}[1]{Theorem \ref{t.#1}}
\newcommand{\lref}[1]{Lemma \ref{l.#1}}
\newcommand{\pref}[1]{Proposition \ref{p.#1}}
\newcommand{\cref}[1]{Corollary \ref{c.#1}}
\newcommand{\fref}[1]{Figure \ref{f.#1}}
\newcommand{\sref}[1]{Section \ref{s.#1}}
\newcommand{\partref}[1]{\ref{part.#1}}
\newcommand{\dref}[1]{Definition \ref{d.#1}}
\newcommand{\rref}[1]{Remark \ref{r.#1}}
\newcommand{\deflink}[2]{\hyperref[#1]{#2}}
\newcommand{\EE}{\textup{(\deflink{d.energy_solution}{E})}}
\numberwithin{theorem}{section}
\numberwithin{equation}{section}
\newcommand{\R}{\mathbb{R}}
\newcommand{\grad}{\nabla}
\def\XXint#1#2#3{{\setbox0=\hbox{$#1{#2#3}{\int}$ }
\vcenter{\hbox{$#2#3$ }}\kern-.6\wd0}}
\newcommand{\ep}{\varepsilon}
\newcommand{\Diss}{\operatorname{Diss}}
\newcommand{\Dissbar}{\overline{\operatorname{Diss}}}
\newcommand{\one}{\mathbf{1}}
\definecolor{niceblue}{rgb}{0,0,0.7}
\definecolor{darkgreen}{rgb}{0,0.4,0}
\def\strikethrough#1{\setbox0\hbox{#1}\rlap{#1}\hbox to \wd0{\hss\strikebox\hss}}
\def\strikebox{\vrule height 0.6\ht0 depth -0.4\ht0 width 1.1\wd0}
\begin{document}

\title{On the geometry of rate-independent droplet evolution}
\author[W. M. Feldman]{William M Feldman}
\address{Department of Mathematics, University of Utah, Salt Lake City, Utah, 84112, USA}
\email{feldman@math.utah.edu}
\author[I. C. Kim]{Inwon C Kim} 
\address{Department of Mathematics, University of California, Los Angeles, California, 90095, USA}
\email{ikim@math.ucla.edu}
\author[N. Po\v{z}\'{a}r]{Norbert Po\v{z}\'{a}r}
\address{Faculty of Mathematics and Physics, Institute of Science and Engineering, Kanazawa University, Kakuma, Kanazawa 920-1192, Japan}
\email{npozar@se.kanazawa-u.ac.jp}
\keywords{Rate-independent evolution, free boundaries, contact angle hysteresis, free boundary regularity}
\begin{abstract}
We introduce a toy model for rate-independent droplet motion on a surface with contact angle hysteresis based on the one-phase Bernoulli free boundary problem. We consider a notion of energy solutions and show existence by a minimizing movement scheme. The main result of the paper is on the PDE conditions satisfied by general energy solutions: we show that the solutions satisfy the dynamic contact angle condition $\mathcal{H}^{d-1}$-a.e. along the contact line at every time.

\end{abstract}

\maketitle

%table of contents depth comment out to see subsections
\setcounter{tocdepth}{1}
\tableofcontents

 \section{Introduction}

In this work we consider a toy model, based on the one-phase Bernoulli free boundary problem, for the rate-independent motion of droplets on a solid surface with the effects of contact angle hysteresis. 

It is well understood in the physics and engineering literature that capillary droplets on many solid surfaces experience a phenomenon known as \emph{contact angle hysteresis} \cite{EralMannetjeOh}. Rather than a single contact angle specified by the material properties, as would be predicted by the classical Young's law in capillarity theory \cite{KarimReview}, there is a range of stable contact angles.  Consequently droplets can ``stick" to the solid surface; the contact line (the curve separating the wet and dry regions) does not move under small applied forces although the free surface (the liquid-air interface) does move. The origin and appropriate modeling of contact angle hysteresis is the subject of much debate in the engineering literature \cite{EralMannetjeOh,KarimReview}. 

Similar phenomena occur in the context of two phase flow in a porous medium, called capillary pressure hysteresis in this context \cite{ZMSNHJ,DPW,SS}.  Typically the flow velocity in each phase is determined by Darcy's law and the phase interface moves with velocity proportional to a pressure differential. However, when capillarity forces at the interface, where individual grains of the medium are only partially wetted, are of the same order as the pressure differential, pinning can occur. The origin of this pinning is exactly the microscopic surface roughness of the matrix medium and the associated contact angle hysteresis. The phase interface only advances or recedes when the pressure differential exceeds a certain threshold. 

The present paper is on the qualitative features of a macroscopic model for these types of interface pinning phenomena.  Our model is inspired by the one proposed by DeSimone, Grunewald, and Otto \cite{DeSimoneGrunewaldOtto}, with theory developed by Alberti and DeSimone \cite{alberti2011}.  

Instead of the capillarity energy with slowly varying volume constraint, as considered in \cite{DeSimoneGrunewaldOtto,alberti2011}, we study a quasi-static and rate-independent evolution associated with the Alt-Caffarelli one-phase energy functional under a slowly varying Dirichlet forcing. Although the Dirichlet driven problem does not conserve volume, it arises when studying the local behavior near the contact line where volume can be lost and gained from the bulk. Such scenario arises, for instance, in the low velocity {\it Wilhelmy plate method} for measuring dynamic contact angles, see for example \cite{SauerCarney}. There a plate or fiber is lifted slowly from a liquid bath and, in the reference frame of the plate, the surface height of the bath provides a time varying Dirichlet condition. While we expect to be able to handle the capillary energy instead of our linearized energy with mostly technical modifications, it is less clear how to address the volume constraint instead of the Dirichlet forcing.   See further discussion on the prescribed volume constraint problem in \sref{discussion}.

Our goal in this paper is to describe the motion law of solutions within an energetic framework. More precisely we introduce an energetic notion of solutions, \emph{global energetic solutions} (which we just refer to as ``energy solutions"). Energy solutions can be defined in a quite general setting, and we show that they satisfy the \emph{dynamic slope condition} \eqref{e.dynamic-slope-condition-intro}---which governs the continuous motion---pointwise almost everywhere along the contact line.  To our knowledge this is the first such rigorous result for a general class of weak solutions to a rate-independent model of droplet evolution. We are not aware of any prior works addressing the optimal space-time regularity of energy solutions in rate-independent systems. 
Besides its own interest, our result can be viewed as the first step toward understanding further regularity properties of energy solutions. In order to achieve  regularity properties for energy solutions, or to assess the existence of irregular energy solutions, it is necessary to first understand in detail the dynamic PDE conditions satisfied by solutions.  
For example, in our companion work \cite{FKPii}, we achieved optimal space-time regularity of a notion of solutions in a certain geometry which satisfies the dynamic slope condition in pointwise sense.

\subsection{ The model}\label{s.intro-problem-details} Consider a connected domain $U$ in $\R^d$ with compact complement.  For $u: U \to [0,\infty)$ consider the Alt--Caffarelli one-phase free boundary energy functional
\begin{equation}\label{e.energy-intro}
\mathcal{J}(u) = \int_U |\nabla u|^2 + {\bf 1}_{\{u>0\}} \ dx,
\end{equation}
where $\one_{\{u > 0\}}$ is the indicator function of the set $\Omega(u) := \{u>0\}$.

In the context of capillarity modelling the droplet's free surface (liquid--gas) is given by the graph of $u$ on $U$, and the positivity set $\Omega(u)$ is the wetted area: the contact between the liquid and solid phase; see \fref{one-phase}. The triple junction or contact line (liquid-gas-solid) is located at the free boundary $\partial\Omega(u) \cap U$. The Dirichlet energy $\int_U |\nabla u|^2 \,dx$ comes from a linearization of the surface area.

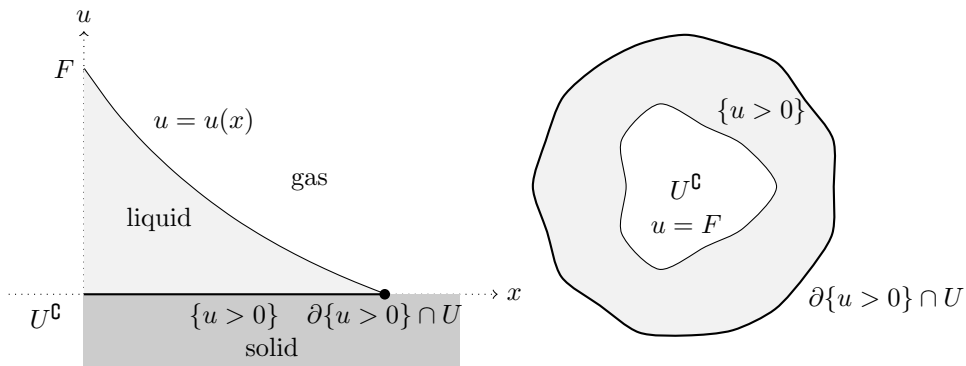
\begin{figure}
\centering
\begin{minipage}{0.48\textwidth}
\begin{tikzpicture}
\draw[->,dotted] (-1,0) -- (5.5,0) node[right] {$x$};
\draw[->,dotted] (0,0) -- (0,3.5) node[above] {$u$};
\draw[thick] plot [smooth] coordinates {
% x = np.linspace(0.5, 1., 11)
% for x, y in zip(x, 1 / x - 1):
%     print(f'({2*(x*4-2):3.3},{0.75*y*4:3.3})')
(0.0,3.0)
(0.4,2.45)
(0.8,2.0)
(1.2,1.62)
(1.6,1.29)
(2.0,1.0)
(2.4,0.75)
(2.8,0.529)
(3.2,0.333)
(3.6,0.158)
(4.0,0.0)
};
\fill[black!5!white] plot [smooth] coordinates {
(0.0,3.0)
(0.4,2.45)
(0.8,2.0)
(1.2,1.62)
(1.6,1.29)
(2.0,1.0)
(2.4,0.75)
(2.8,0.529)
(3.2,0.333)
(3.6,0.158)
(4.0,0.0)
} -- (0,0);
\fill[black!20!white] (0,0) -- (5,0) -- (5,-1) -- (0,-1);
\draw (1,1) node {liquid};
\draw (3,1.5) node {gas};
\draw (2.5,-0.7) node {solid};
\draw (2,0) node[below] {$\{u > 0\}$};
\draw (4,0) node[below] {$\partial \{u > 0\} \cap U$};
\fill (4,0) circle[radius=2pt];
\draw[thick] (0,0)--(4,0);
\draw (0,3) node[left] {$F$};
\draw (0.8,2.0) node[above right] {$u = u(x)$};
\draw (-0.5,0) node[below] {$U^\complement$};
\end{tikzpicture}
\end{minipage}
\hfill
\begin{minipage}{0.45\textwidth}
\begin{tikzpicture}
\draw[thick,fill=black!5!white] plot [smooth cycle] coordinates {
(1.96,0.0)
(1.95,0.633)
(1.58,1.15)
(1.2,1.65)
(0.607,1.87)
(0.0,2.02)
(-0.616,1.9)
(-1.17,1.61)
(-1.64,1.19)
(-1.87,0.607)
(-2.04,0.0)
(-1.85,-0.603)
(-1.66,-1.2)
(-1.15,-1.58)
(-0.629,-1.94)
(-0.0,-1.98)
(0.62,-1.91)
(1.18,-1.63)
(1.6,-1.16)
(1.94,-0.629)
};
\draw[fill=white] plot [smooth cycle] coordinates {
(1.2,0.0)
(0.759,0.551)
(0.259,0.797)
(-0.359,1.1)
(-0.859,0.624)
(-0.8,0.0)
(-0.859,-0.624)
(-0.359,-1.1)
(0.259,-0.797)
(0.759,-0.551)
}
;
\draw (1.5,-1.5) node[right] {$\partial \{u > 0\} \cap U$};
\draw (0,0) node {$U^\complement$};
\draw (0,-0.5) node {$u = F$};
\draw (1,1) node {$\{u > 0\}$};
\end{tikzpicture}
\end{minipage}
\caption{Side view (left) and the top view (right) of the setup for the one-phase free boundary problem.}
\label{f.one-phase}
\end{figure}

We recall that the Euler-Lagrange equation associated with the energy functional $\mathcal{J}$ is the one-phase Bernoulli free boundary problem
\begin{equation}\label{e.euler-lagrange-bernoulli}
\left\{
\begin{aligned}
\Delta u &= 0 && \hbox{in } \Omega(u) \cap U,\\
|\grad u|^2 &= 1 &&\hbox{on } \partial \Omega(u) \cap U.
\end{aligned}
\right.
\end{equation}

We are interested in  rate-independent evolutions associated with the energy functional $\mathcal{J}$, driven by a Dirichlet boundary data $F(t) : [0,T] \to (0,\infty)$ acting as the external forcing.
 We augment the energy functional \eref{energy-intro} by adding an energy dissipation in a form of a \emph{dissipation distance}: for any pair of sets $\Omega_0$ and $\Omega_1$ in $\R^d$ we define
 \[\Diss(\Omega_0,\Omega_1) = \mu_+|\Omega_1 \setminus \Omega_0| + \mu_-|\Omega_0 \setminus \Omega_1|.\]  
 This non-symmetric distance measures the energy dissipated by the motion of the free interface under (monotone) motion of the positive phase from state $\Omega_0$ to state $\Omega_1$.  The coefficients $\mu_+>0$ and $\mu_- \in (0,1)$ can be viewed as the friction forces per unit length of the free interface, respectively for advancing and receding regimes. 
 
 In what follows we will abuse notation and also write $\Diss(u,v) = \Diss(\Omega(u),\Omega(v))$ for the dissipation distance between the positive phase of two profiles $u$ and $v$.

In the simple model we assume that free interface can move only if the ``force'' $|\nabla u|^2 - 1$ per unit length on the interface coming from the first variation of potential energy \eref{energy-intro} can overcome the static friction force $\mu_+$ or $\mu_-$, depending on whether the contact line advances or recedes. Furthermore, the scale at which the contact line moves is much faster than the scale on which we observe the state of the system, and therefore at each time $t$ the interface is assumed to be in an equilibrium configuration and cannot move. This state $u(t)$ can be characterized as a local minimizer of the augmented energy functional
\begin{align}
\label{e.E-def-intro}
\mathcal{E}(u, u') := \mathcal{J}(u') + \Diss(u, u'),
\end{align}
 that expresses the total of the potential energy $\mathcal{J}(u')$ in an alternative state $u'$ and the energy dissipated by the friction forces on the contact line required to reach the alternative state. 
 
 The formal first variation of the augmented energy functional $\mathcal E(u,\cdot)$ results in the pinned one-phase free boundary problem
\begin{equation}\label{e.stability-condition-intro}
\begin{cases}
\Delta u= 0 & \hbox{ in } \ \{u>0\} \cap U,\\
1-\mu_- \leq |\grad u|^2 \leq 1+\mu_+ & \hbox{on } \partial \{u>0\} \cap U.
\end{cases}
\end{equation}
Now the gradient is allowed to take a range of values on the interface: this is the manifestation of contact angle hysteresis in this model as ``slope hysteresis".

The state might change with $t$ due to the  Dirichlet boundary condition
\begin{align*}
u(t) = F(t) \qquad \text{on  } \partial U
\end{align*}
at each $t \geq 0$.
Since we can observe the state only in an equilibrium, the evolution is rate-independent in the sense that the path does not depend on how fast $F$ changes, that is, the time variable can be monotonically reparametrized yielding an equivalent evolution.

Varying $F(t)$ pulls up (or down) the profile $u(t)$ but the free boundary remains pinned as long as the gradient at the free boundary is within the pinning interval in \eref{stability-condition-intro}. Once the gradient saturates one of the endpoints in the pinning condition \eref{stability-condition-intro} somewhere, the interface needs to move. And, indeed, the free boundary \emph{only} advances or recedes when the gradient saturates the corresponding endpoint of the pinning interval.  This heuristic suggests the \emph{dynamic slope condition}
\begin{equation}\label{e.dynamic-slope-condition-intro}
|\grad u(t,x)|^2 = 1\pm \mu_\pm \quad \hbox{ if } \quad \pm V_n(t,x) >0  \quad \hbox{on } \  \partial \Omega(u(t)) \cap U
\end{equation}
where $V_n(t,x)$ is the outward normal velocity of $\Omega(u(t))$ at $x \in \partial \Omega(u(t))$, and $\nabla$ always denotes the spatial gradient. This condition is analogous to the dynamic contact angle condition which is studied in the physics of capillarity.

    \subsection{Main results}

The view of the evolution as a continuum of stable states of the energy functional $\mathcal{E}$ motivates our present notion of solutions of the rate-independent evolution of the Bernoulli functional. However, the local minimality is strengthened to a global one, as is common in the theory of rate-independent systems \cite{mielke2015book,alberti2011}. Additionally, an evolution between any two states at two times along the solution must be energetically allowed: the dissipated energy required to reach the second state from the first cannot be larger than the difference of potential energies of the two states together with the work done by the external forcing.
    
\begin{definition}\label{d.energy_solution}
A measurable $u : [0,T] \to H^1(U)$ is a \emph{energy solution} \EE{} of the quasi-static evolution problem driven by Dirichlet forcing $F$ if the following hold:
\begin{enumerate}
\item (\emph{Forcing})  For all $t \in [0,T]$
\[ u(t) = F(t) \ \hbox{ on } \partial U.\]
\item (\emph{Global stability}) The solution $u(t)\in H^1(U)$ and satisfies  for all $t \in [0,T]$:
\begin{equation*}
\label{e.stability}
\mathcal{J}(u(t)) \leq \mathcal{J}(u') + \Diss(u(t),u') \qquad \text{for all }u' \in u(t)+ H^1_0(U). 
\end{equation*}
\item (\emph{Energy dissipation inequality}) For every $0 \leq t_0 \leq t_1 \leq T$ it holds
\begin{equation}\label{e.diss-inequality}
 \mathcal{J}(u({t_0}))-\mathcal{J}(u({t_1}))  + \int_{t_0}^{t_1}2 \dot{F}(t)P(t) \ dt \geq \Diss(u(t_0),u(t_1)).
\end{equation}
   Here $P(t) = P(u(t))= \int_{\partial U} \frac{\partial u(t)}{\partial n} \ dS$ is an associated pressure. 
  \end{enumerate}
  \end{definition}

In some works, for example \cite{alberti2011}, energy solutions are required to satisfy an \emph{energy dissipation balance} condition instead of the inequality we use. That notion is equivalent to ours, see \rref{energy-dissipation-equality} later for more details.

The Euler-Lagrange equations for the global stability condition \eref{stability} is the pinned one-phase problem \eref{stability-condition-intro}.  Taking the time derivative in the energy dissipation balance shows that smooth energy solutions satisfy the dynamic slope condition \eref{dynamic-slope-condition-intro}; see \eref{energy-diss-eq} and \eref{formal-energy-argument-conclusion} below for details of the computation.  Justifying this formal computation for general energy solutions is quite difficult and is the content of \tref{main-2} below.

 \subsubsection*{Minimizing movements scheme} The most typical way to construct an energy solution \EE{} is based on a time-incremental or minimizing movements scheme. For $\delta > 0$ consider the time-discrete approximation scheme 
\begin{equation}\label{e.minimizing-movement-scheme}
u_{\delta}^k \in \mathop{\textup{argmin}} \left\{ \mathcal{J}(w) + \Diss(u^{k-1}_\delta, w): w\in F(k\delta)+  H^1_0(U)\right\}.
\end{equation}
Using piecewise constant interpolation define, for all $t \in [0,T]$,
 \begin{equation*}
 u_{\delta}(t):= u_{\delta}^k \ \hbox{ and } \ F_\delta(t) = F(k\delta) \ \hbox{ if } \ t\in [k\delta, (k+1)\delta).
 \end{equation*}
The time incremental scheme produces energy solutions via a compactness idea introduced by Mainik and Mielke \cite{MainikMielke}.
\begin{definition}\label{d.mm-solution}

  Say that $u : [0,T] \to H^1(U)$ measurable is a \emph{minimizing movements energy solution} if there is a sequence $\delta_k \to 0$ and $u_{\delta_k}(t)$ solving the scheme \eref{minimizing-movement-scheme} so that $u_{\delta_k}(t) \to u(t)$ in $L^2(U)$ for \emph{every} $t \in [0,T]$.
  \end{definition}

Our first main result is the existence of an energy solution via the minimizing movements scheme.
\begin{theorem}[see \sref{energy-soln-existence} for a proof]\label{t.existence}
The $u_\delta(t)$ generated by the minimizing movements scheme satisfy the following:
\begin{enumerate}[label = (\roman*)]
\item\label{part.existence-BV-bounds} The states $u_{\delta}(t)$ are uniformly Lipschitz,  $\chi_{\Omega_{\delta}(t)}$ and $\mathcal{J}(u_{\delta})$ are uniformly bounded in $BV([0,T]; L^1(U))$ and $BV([0,T]; \R)$ respectively.
\item\label{part.existence-helly}  There is a subsequence $\delta_k \to 0$ and $u : [0,T] \to F(t) + H^1_0(U)$ so that for \emph{every} $t \in [0,T]$
\[\|u_{\delta_k}(t)- u(t)\|_{L^\infty(U)} \to 0 \hbox{ and } \ d_H(\Omega(u_{\delta_k}(t)), \Omega(u(t))) \to 0 \ \hbox{ as} \ \delta_k \to 0.\]
\item\label{part.existence-solution-props} Any such subsequential limit $u(t)$ is an energy solution.
\end{enumerate}
\end{theorem}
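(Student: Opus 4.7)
The plan is to follow the classical Mainik--Mielke \cite{MainikMielke} template for rate-independent systems, adapted to the one-phase Alt--Caffarelli functional \eref{energy-intro}. For \partref{existence-BV-bounds}, the uniform Lipschitz bound on each $u_\delta^k$ should follow from standard Alt--Caffarelli regularity: since the dissipation term $\Diss(u_\delta^{k-1},\cdot)$ only penalizes the volume of the symmetric difference of positivity sets, the harmonic-replacement and non-degeneracy arguments that yield interior Lipschitz bounds for minimizers of $\mathcal{J}$ go through with only a controlled perturbation (with constants depending on $\sup F$). For the $BV$-in-time bound I would test minimality at step $k$ against the simple scaling competitor $w:=(F(k\delta)/F((k-1)\delta))\,u_\delta^{k-1}$, which shares the positive set of $u_\delta^{k-1}$ so that $\Diss(u_\delta^{k-1},w)=0$. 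A direct expansion using the Green's identity $F(t)P(u(t))=\int_U|\nabla u(t)|^2$ on the harmonic region produces the one-step estimate
\[
\mathcal{J}(u_\delta^k)+\Diss(u_\delta^{k-1},u_\delta^k)\le \mathcal{J}(u_\delta^{k-1})+2(F(k\delta)-F((k-1)\delta))\,P(u_\delta^{k-1})+O((\Delta F)^2),
\]
whose telescoping sum gives the desired uniform bounds on $\chi_{\Omega_\delta}\in BV([0,T];L^1(U))$ and on $\mathcal{J}(u_\delta)\in BV([0,T];\R)$.

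For \partref{existence-helly}, the $BV([0,T];L^1(U))$ bound on $\chi_{\Omega_\delta(\cdot)}$ puts me in the setting of an abstract Helly-type selection theorem: diagonalizing over a countable dense set of times and then using the $BV$ estimate to extend the convergence to all $t\in[0,T]$ yields a subsequence $\delta_k\to 0$ with $\chi_{\Omega_{\delta_k}(t)}\to\chi_{\Omega(t)}$ in $L^1(U)$ for every $t$. The uniform Lipschitz and $L^\infty$ bounds combined with harmonicity on $\Omega(u_\delta^k)$ give equicontinuity of $u_\delta(t)$ at each fixed $t$, which combined with the set convergence promotes to uniform convergence $u_{\delta_k}(t)\to u(t)$; the uniform non-degeneracy of one-phase minimizers upgrades this to Hausdorff convergence of the positive sets.

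For \partref{existence-solution-props}, the forcing condition is immediate. To obtain global stability I would first convert the one-sided discrete minimality into the two-sided stability $\mathcal{J}(u_\delta^k)\le \mathcal{J}(w)+\Diss(u_\delta^k,w)$ by using the triangle inequality $\Diss(\Omega_0,\Omega_2)\le \Diss(\Omega_0,\Omega_1)+\Diss(\Omega_1,\Omega_2)$ (a direct consequence of $\Omega_2\setminus\Omega_0\subseteq(\Omega_2\setminus\Omega_1)\cup(\Omega_1\setminus\Omega_0)$ and its reverse). Then pass to the limit using weak $H^1$ lower semicontinuity of the Dirichlet integral, $L^1$ continuity of the volume term (from \partref{existence-helly}), and a recovery sequence $w_\delta:=w+(F(t_\delta)-F(t))\eta$ for a fixed cutoff $\eta\equiv 1$ near $\partial U$. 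The energy dissipation inequality follows by summing the one-step estimates from \partref{existence-BV-bounds} between discrete times approximating $t_0$ and $t_1$, interpreting the sum of work terms as a Riemann approximation of $\int_{t_0}^{t_1}2\dot F(t)P(u(t))\,dt$, and applying $L^1$ lower semicontinuity of $\Diss$ in both arguments.

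The main obstacle I anticipate is the passage to the limit in global stability: producing a recovery sequence whose positivity set converges in $L^1$ to $\{w>0\}$ requires care since $w\mapsto \mathbf{1}_{\{w>0\}}$ is not continuous on $H^1$. One usually needs a density/approximation argument, perhaps first restricting to competitors $w$ whose positive sets have boundary of finite perimeter, then extending by approximation. A secondary technical obstacle is justifying convergence of the work integral, which rests on time-regularity of the pressure $P(u(t))$; one extracts this from $F(t)P(u(t))=\int_U|\nabla u(t)|^2$, together with the uniform Lipschitz bound and the $BV$-in-time control of $\mathcal{J}(u_\delta)$ and $|\Omega(u_\delta)|$ from \partref{existence-BV-bounds}.
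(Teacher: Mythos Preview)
Your proposal follows the same Mainik--Mielke template as the paper, and the argument for \partref{existence-BV-bounds} and \partref{existence-helly} matches the paper almost exactly (same scaling competitor, same telescoping, same Helly selection with non-degeneracy to upgrade to Hausdorff convergence). The differences are in \partref{existence-solution-props}, and they are worth noting because the paper's choices eliminate precisely the two obstacles you flag at the end.

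First, for the stability limit you propose the additive recovery sequence $w_\delta=w+(F(t_\delta)-F(t))\eta$ with a cutoff $\eta$. This is what generates your ``main obstacle'': the positivity set of $w_\delta$ need not converge in $L^1$ to $\{w>0\}$, since the perturbation can create or destroy positivity on $\{\eta\neq 0\}\cap\{w=0\}$ or $\{w\ \text{small}\}$. The paper instead uses the \emph{multiplicative} recovery $\tilde v:=\tfrac{F(k\delta)}{F(t)}\,v$, which has $\Omega(\tilde v)=\Omega(v)$ exactly (since $F>0$), so $\Diss(u_\delta(t),\tilde v)=\Diss(u_\delta(t),v)$ and $\mathcal{J}(\tilde v)\to\mathcal{J}(v)$ trivially as the scaling factor tends to $1$. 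No density or finite-perimeter approximation of competitors is needed.

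Second, for both the left-hand side of stability and the work integral, you rely on weak lower semicontinuity of the Dirichlet energy and separately worry about time-regularity of $P$. The paper avoids this by running the Helly selection not just on $\chi_{\Omega_\delta(t)}$ but on the triple $(\chi_{\Omega_\delta(t)},P_\delta(t),\mathcal{J}(u_\delta(t)))$, each of which is uniformly $BV$ in time by \partref{existence-BV-bounds}. One then checks consistency of the limits (as in \pref{lr-limits}\partref{lr-limits}) so that $\mathcal{J}(u_\delta(t))\to\mathcal{J}(u(t))$ and $P_\delta(t)\to P(u(t))$ with genuine convergence at every $t$; this makes both the stability limit and the dominated-convergence passage in $\int 2\dot F P_\delta$ immediate.
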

By \tref{existence} any minimizing movements solution, \dref{mm-solution}, is indeed an energy solution \EE{}. On the other hand, it remains open whether all energy solutions are minimizing movements solutions.

 The second main result shows that general energy solutions still satisfy the dynamic slope condition \eqref{e.dynamic-slope-condition-intro} in a geometric measure theoretic sense.

\begin{theorem}[see \pref{lr-limits} and \tref{ae-viscosity-prop}]\label{t.main-2}
Suppose $u$ is an energy solution on $[0,T]$. Then
\begin{enumerate}[label = (\roman*)]
\item\label{part.main2-p0}  \emph{(Basic regularity properties)} The states $u(t)$ are uniformly Lipschitz and non-degenerate and  $\mathcal{H}^{d-1}(\partial\Omega(t))$ is uniformly bounded in time. Also $t \mapsto \Omega(u(t))$ is in $BV([0,T];L^1(\R))$ and $u(t)$ has left and right limits in uniform metric at every time, denoted $u_\ell(t)$ and $u_r(t)$.
\item\label{part.main2-p1}  \emph{(Upper and lower envelopes)} The upper and lower semicontinuous envelopes of $u$, called $u^*$ and $u_*$, are themselves energy solutions and actually $u^*(t) = \max\{u_\ell(t), u_r(t)\}$ and $u_*(t) = \min\{u_\ell(t) , u_r(t)\}$.
\item\label{part.main2-p2}  \emph{(Dynamic slope condition a.e.)} For all $t \in [0,T]$ the function $u(t)$ satisfies the stability condition \eref{stability-condition-intro} and satisfies (in terms of $u^*$ and $u_*$) the dynamic slope condition \eref{dynamic-slope-condition-intro} at $\mathcal{H}^{d-1}$-almost every point of its free boundary $\partial \Omega(u(t)) \cap U$.
\end{enumerate} 
\end{theorem}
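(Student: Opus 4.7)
The proof decomposes along the three parts. For part \partref{main2-p0}, the global stability condition makes $u(t)$ an Alt--Caffarelli quasi-minimizer of $\mathcal{J}$: any admissible competitor $u'$ costs at most $(\mu_+ + \mu_-)|\Omega(u') \triangle \Omega(u(t))|$ in the dissipation, which is a permissible perimeter-type perturbation. Standard one-phase theory then delivers uniform Lipschitz bounds, non-degeneracy, and uniform bounds on $\mathcal{H}^{d-1}(\partial \Omega(u(t)))$. For $BV$-in-time of $t \mapsto \chi_{\Omega(u(t))}$, I would apply the dissipation inequality along arbitrary partitions $0 = t_0 < \cdots < t_N = T$: telescoping $\mathcal{J}$, bounding $\int |\dot F||P|\,dt$ using the Lipschitz-dependent bound on $P(t)$, and using $\Diss(\cdot,\cdot) \geq \min(\mu_+, \mu_-)|\cdot \triangle \cdot|$ yields a partition-independent bound on $\sum |\Omega(u(t_{i+1})) \triangle \Omega(u(t_i))|$. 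Because $u(t)$ is the unique harmonic function with Dirichlet data $F(t)$ on $\partial U$ and $0$ on $\partial \Omega(u(t)) \cap U$, the state depends continuously on $\Omega(u(t))$ in $L^\infty$, so the $L^1$ left/right limits of $\chi_{\Omega(u(t))}$ produced by Helly's selection theorem lift to uniform left/right limits of $u(t)$.

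For part \partref{main2-p1}, I would verify each axiom of \dref{energy_solution} for $u^*$ and $u_*$ separately. Global stability passes to $u^*(t)$ by approximating via $s_n \to t$ with $u(s_n) \to u^*(t)$ pointwise; lower-semicontinuity of $\mathcal{J}$ together with continuity of $\Diss(\cdot, u')$ under $L^1$ convergence of positivity sets closes the argument. The dissipation inequality is preserved because its right-hand side is upper-semicontinuous under the envelope operation while the forcing integral $\int 2 \dot F P\,dt$ only sees boundary traces and is insensitive to modifications at countably many times. To identify $u^*(t) = \max\{u_\ell(t), u_r(t)\}$, I would first observe that the maximum of two stable states is stable in the Bernoulli setting (it is the unique harmonic function in the union of the positivity sets with the right data), and second rule out any value strictly above this maximum by testing global stability against the competitor $\min(u(s_n), u^*(t))$ along an appropriate sequence.

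For part \partref{main2-p2}, I would first upgrade the dissipation inequality to the \emph{energy dissipation equality}, since the reverse inequality can be obtained by using the piecewise-constant interpolant between $u(t_0)$ and $u(t_1)$ as a sequence of competitors in the global stability condition and telescoping. Differentiating this equality in $t_1$ and applying the shape derivative formula to $\mathcal{J}(u(t))$ (using harmonicity to reduce $\int |\nabla u|^2$ to a boundary integral and then Hadamard's formula) leads, formally, to
\begin{equation*}
\int_{\partial \Omega(u(t)) \cap U} \bigl(|\nabla u(t)|^2 - 1 + \mu_+ \one_{\{V_n > 0\}} - \mu_- \one_{\{V_n < 0\}}\bigr) V_n \, d\mathcal{H}^{d-1} = 0
\end{equation*}
for a.e.\ $t$, where $V_n$ is the normal velocity interpreted via the Radon--Nikodym derivative of the $BV$-in-time structure of $\chi_{\Omega(u(t))}$. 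Combined with the pinning bounds $1 - \mu_- \leq |\nabla u(t)|^2 \leq 1 + \mu_+$ from the Euler--Lagrange system \eref{stability-condition-intro} (itself an immediate consequence of global stability against small normal perturbations), the integrand must vanish $\mathcal{H}^{d-1}$-a.e. on the free boundary, which is precisely the dynamic slope condition \eref{dynamic-slope-condition-intro}.

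The principal obstacle, and essentially the only substantive input beyond the variational calculus, is giving geometric meaning to $V_n$ along the merely rectifiable free boundary and justifying the shape-derivative identity in this low-regularity setting. The plan is to localize at points of the reduced free boundary, apply Alt--Caffarelli--Friedman-type $\varepsilon$-regularity for quasi-minimizers to obtain $C^{1,\alpha}$ regularity of $\partial \Omega(u(t)) \cap U$ away from a relatively closed singular set of vanishing $\mathcal{H}^{d-1}$-measure, and use cylindrical blow-ups in space-time at regular points to identify the sign of $V_n$ with whether the spatial blow-up is advancing or receding. The classical shape derivative then applies on the regular part, the singular part contributes nothing to the $\mathcal{H}^{d-1}$ integral, and the pointwise identification that forces the slope to saturate the pinning endpoints yields the claim.
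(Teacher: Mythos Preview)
Your outline for part \partref{main2-p0} is essentially the paper's argument and is fine.

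For part \partref{main2-p1} there is a gap. You write that ``the maximum of two stable states is stable in the Bernoulli setting (it is the unique harmonic function in the union of the positivity sets with the right data).'' But the pointwise maximum of two harmonic functions is only subharmonic, and global stability of $u_\ell(t)$ and $u_r(t)$ \emph{separately} (each minimizing $\mathcal{E}(u_\ell,\cdot)$ and $\mathcal{E}(u_r,\cdot)$ respectively) does not obviously yield stability of their max or min. The paper's mechanism is different: it first shows, using the energy dissipation inequality in the limit $t_\pm \to t$, that $u_r(t)$ \emph{also} minimizes $\mathcal{E}(u_\ell(t),\cdot)$. Once $u_\ell$ and $u_r$ minimize the \emph{same} functional, the lattice identity $\mathcal{E}(\Lambda,u_1\vee u_2)+\mathcal{E}(\Lambda,u_1\wedge u_2)=\mathcal{E}(\Lambda,u_1)+\mathcal{E}(\Lambda,u_2)$ makes $u_\ell\vee u_r$ and $u_\ell\wedge u_r$ minimizers too, and in particular harmonic in their positivity sets. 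The sandwich $u_\ell\wedge u_r\le u\le u_\ell\vee u_r$ then comes from a sharp triangle \emph{equality} for $\Diss$, not from a competitor argument of the type you sketch.

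The serious gap is in part \partref{main2-p2}. Your plan is to differentiate the energy dissipation equality in $t$ and invoke the Hadamard shape derivative, appealing to $\varepsilon$-regularity to get a $C^{1,\alpha}$ free boundary off a null set. The paper explicitly explains why this formal route cannot be closed: even granting spatial $C^{1,\alpha}$ regularity $\mathcal{H}^{d-1}$-a.e.\ (which itself is delicate here, since $u(t)$ is only an inward minimizer for $\mathcal{J}_{1-\mu_-}$ and an outward minimizer for $\mathcal{J}_{1+\mu_+}$, not a minimizer of a single functional), you have \emph{no time regularity beyond $BV$}. The ``Radon--Nikodym derivative of the $BV$-in-time structure of $\chi_{\Omega(u(t))}$'' is a measure on $[0,T]\times U$, not an $\mathcal{H}^{d-1}$-integrable normal velocity on $\partial\Omega(u(t))$ at a fixed time, so the displayed identity you write down has no rigorous meaning. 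Differentiation would at best give the conclusion for a.e.\ $t$, whereas the theorem claims \emph{every} $t\in(0,T]$; and at jump times the shape-derivative picture collapses entirely.

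The paper avoids all of this by arguing by contradiction at a fixed $t_0$. If the advancing slope condition fails on a set of positive $\mathcal{H}^{d-1}$ measure, one builds by hand, via a Vitali covering by small cylinders and explicit bent-plane barriers, a competitor $u_\delta$ with $\Omega(u(t_{-1}))\subset\Omega(u_\delta)\subset\Omega(u(t_0))$, $|\Omega(u(t_0))\setminus\Omega(u_\delta)|\ge c\delta$, and a quantitatively favorable Dirichlet-energy change. Plugging this into the discrete energy dissipation inequality on $[t_0-\delta,t_0]$ yields a contradiction as $\delta\to 0$. Jump times are then reduced to the left-continuous case using the envelope structure from part \partref{main2-p1}. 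This competitor construction, not regularity theory, is the substantive content of the proof.
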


The highlight of this result is \partref{main2-p2}, where we show that the energy dissipation balance condition is satisfied in a strong local sense. The regularity properties in \partref{main2-p0} follow from relatively standard energy arguments; we are recalling them here for context. The precise description of the semicontinuous envelopes \partref{main2-p1} is a somewhat unusual result in the context of literature on rate-independent motion. Our analysis of the energetic properties of the upper and lower envelopes in \partref{main2-p1}, see \pref{lr-limits} below, is more typical of the theory of discontinuous viscosity solutions \cite{usersguide} and plays an important role in the proof of the dynamic slope condition.

  On the other hand, just under the energy solution property, we are not aware of any results on the higher regularity of solutions or on stronger (local) notions of the energy dissipation balance condition. In particular we are not aware of any prior literature which assesses the local regularity of the solution implied by the space-time effect of the energy dissipation balance condition. In order to take advantage of the Euler-Lagrange equation to prove higher regularity, as we have done in \cite{FKPii}, it is necessary to first understand exactly in what sense the PDE conditions are satisfied.

\subsection{Open questions}\label{s.discussion} 

We end the introduction with  several open questions. 

{\it  Energetic notion in anisotropic setting}. The global notion of energy solutions which we use in this paper provides many mathematical conveniences and allows much of our analysis. In particular this notion relies on the existence of a globally defined dissipation distance.  In the isotropic case that we consider in this paper there is such a simple and natural distance function.  Such a quantity is not always available. For example, it is not clear how to define any global dissipation distance in the case of the anisotropic media which arise from periodic homogenization \cite{Feldman21,FeldmanSmart,Kim08,CaffarelliMellet2}.

{\it  Locality of jump laws}. It is well-known that global energy solutions can exhibit un-realistic jump discontinuities, i.e. jumps that would be inconsistent with viscous / rate-dependent approximations. They tend to jump ``as early as possible" to preserve the global energy minimality at each time; see \fref{jump-law} for illustration.
   In the companion paper \cite{FKPii} we will discuss a notion of \textit{obstacle solutions}, which jump ``as late and little as possible". This is generally regarded as a physically preferable jump law.  The obstacle solution dissipates the ``right" amount of energy on its jumps, but it is not clearer whether it yields an energetic notion of jump dissipation. It would be interesting to study the possible connection of  the obstacle solutions with the available energetic notion of solutions which jumps ``as late as possible" in other settings, for instance the notion of  the balanced viscosity notions (\cite{MielkeRossiSavare1} or \cite[Chapter 3.8.2]{mielke2015book}).  
\begin{figure}
\begin{tabular}{ll}
 \includegraphics[width=0.5\textwidth]{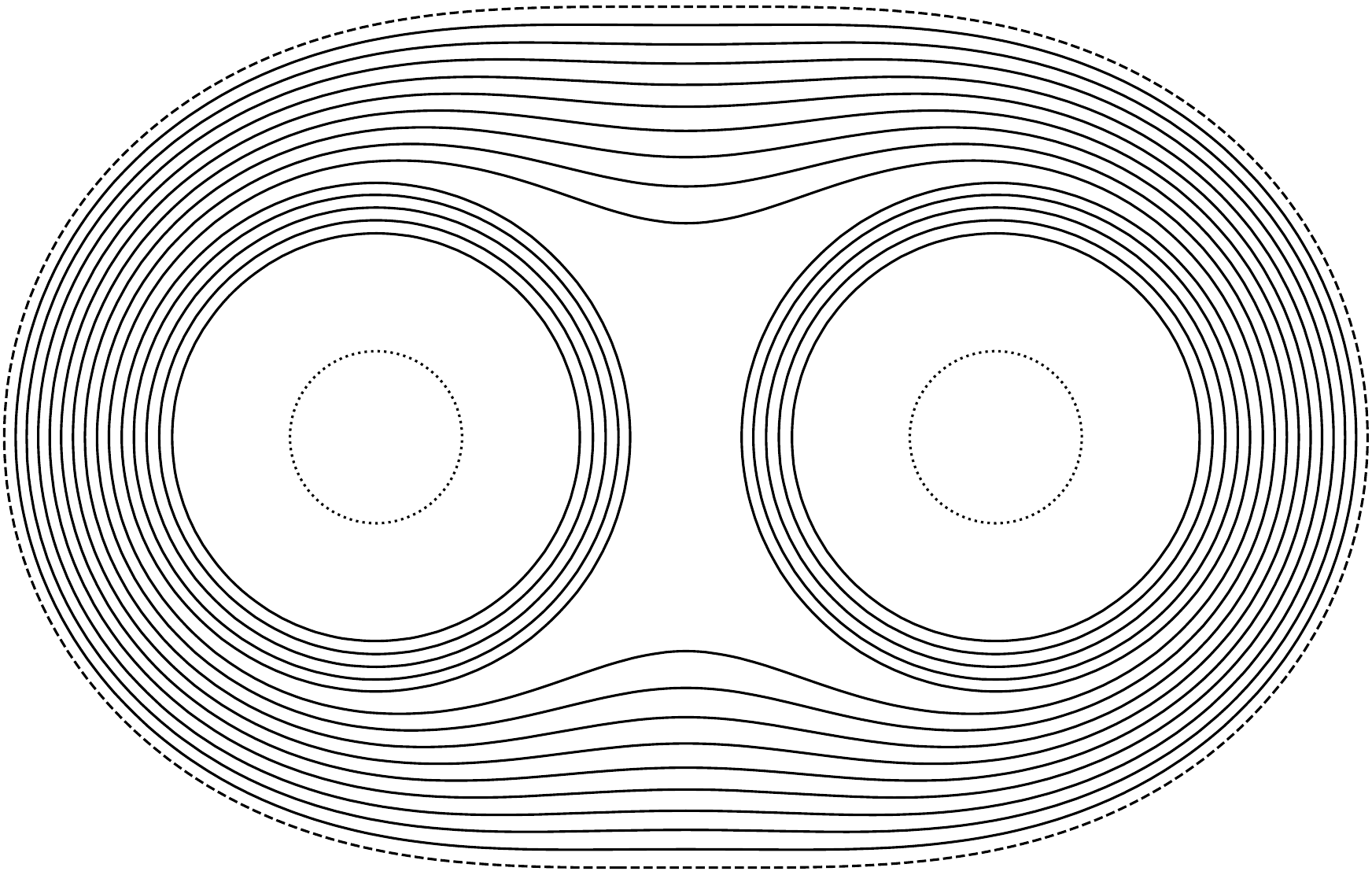}
 &
 \includegraphics[width=0.5\textwidth]{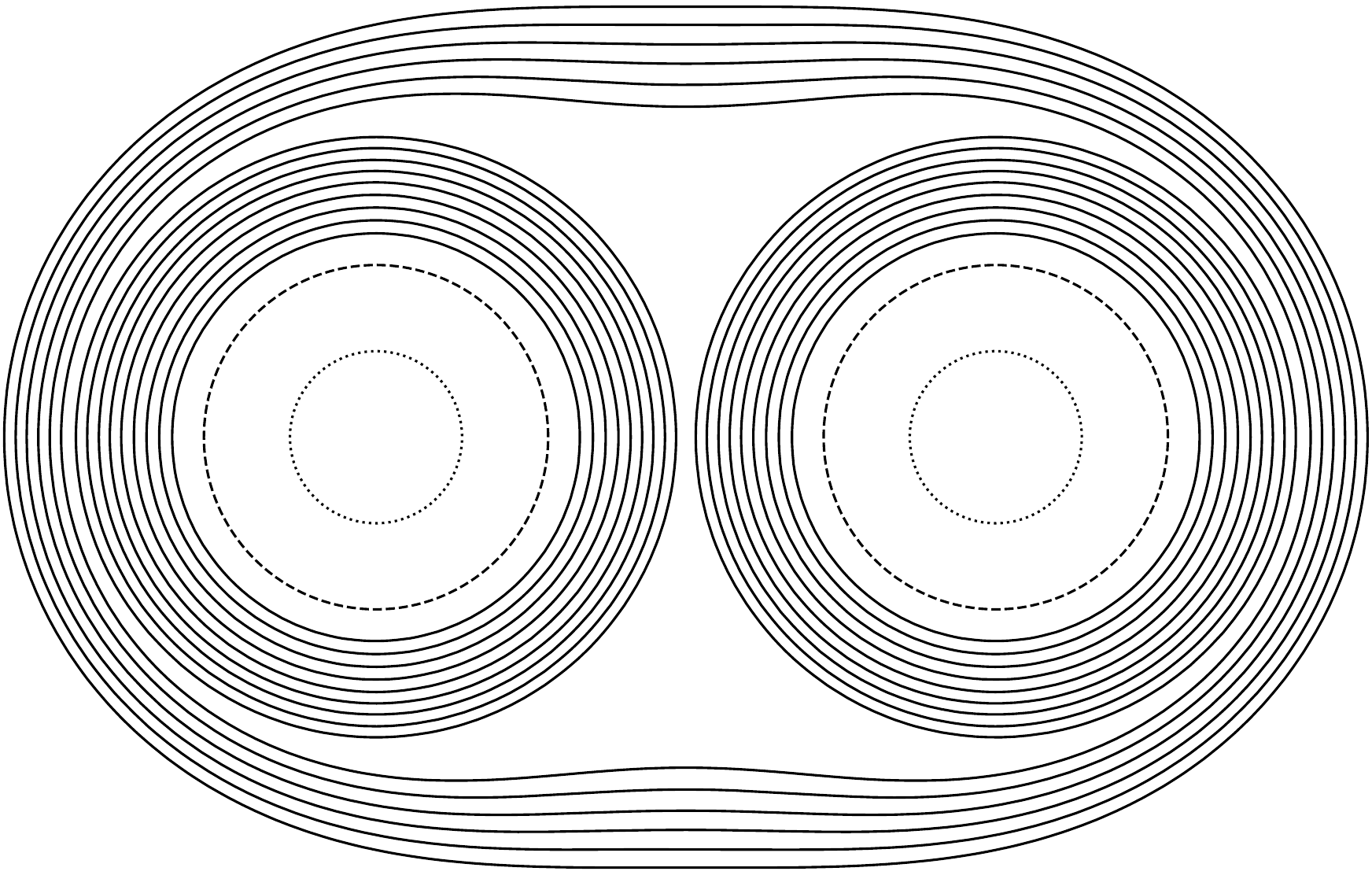}
\end{tabular}
\caption{Illustration of solutions jumping ``as early as possible'' (left) and ``as late as possible'' (right). An increasing Dirichlet boundary condition is prescribed on the two small dotted circles.}
\label{f.jump-law}
\end{figure}

{\it  Volume constraint.} Lastly, we hope that our approach can be extended to address the prescribed volume case, where we minimize \eqref{e.E-def-intro} with the constraint $\int u(t) \ dx = \textup{Vol}(t)$ at each time. The volume $\textup{Vol}(t)$ is a prescribed function of time driving the evolution, modeling evaporation/condensation processes. The first variation of this problem leads to the interior operator $-\Delta u = \lambda$  in the positive set of $u$, replacing the first equation in \eqref{e.stability-condition-intro}. Our theory can be applied rather easily to this case if the Lagrange multipler  $\lambda$ were  instead a priori given as a function of time.  The challenge with prescribing the volume $\textup{Vol}$  is the implicit dependence of $\lambda$ on both $\textup{Vol}$ and the geometry of the set $\{u>0\}$, which brings up various technical difficulties. For instance  the normalization of solutions with respect to the Dirichlet data, in the proof of Theorem 4.3, may not be easily adapted in this case.

\subsection*{Acknowledgments} W. Feldman was partially supported by the NSF grants DMS-2009286 and DMS-2407235. I. Kim was partially supported by the NSF grant DMS-2153254. N. Pozar was partially supported by JSPS KAKENHI Kiban C Grant No. 23K03212.

\subsection{Notations and conventions} We list several notations and conventions which will be in force through the paper.

\begin{enumerate}[label = $\vartriangleright$]
\item We call a constant \emph{universal} if it only depends on $d$ and $\mu_+>0$, $\mu_- \in (0,1)$, a constant is called \emph{dimensional} if it only depends on the dimension.  
\item We will refer to universal constants by $C \geq1$ and $0<c \leq 1$ and allow such constants to change from line to line of the computation.  \item Denote $a \vee b = \max(a, b)$ and $a \wedge b = \min(a, b)$, respectively, for the maximum and minimum of two real numbers $a$ and $b$. 
\item We often abuse notation and write $\Omega(t)$ instead of $\Omega(u(t))$ etc.
   \item $u^*$ and $u_*$ denote the upper-semicontinuous envelope and the lower-semicontinuous envelope, respectively, of $u$ in both time and space. We write USC and LSC respectively as shorthand for upper-semicontinuous and lower-semicontinuous.
\item $F + H^1_0(U)$ refers to the space of functions in $H^1(U)$ with trace $F$ on $\partial U$.
\end{enumerate}

\section{Example of hysteresis}\label{s.ex-hysteresis} In order to understand the macroscopic effect of the hysteresis inherent in the slope pinning condition we consider a simple explicitly solvable radially symmetric example in dimension $d=2$. See \fref{hysteresis-loop} which will be further explained below.
\newcommand{\figScaleX}{.7}
\newcommand{\figScaleY}{.65}
\begin{figure}
\begin{tabular}{ll}
\begin{tikzpicture}[yscale=\figScaleY, xscale = \figScaleX]
\draw[->] (0,0) -- (3.5000,0.0000) node[right] {$|x|$};
\draw[->] (0,0) -- (0,5.5452+.5) node[above] {$u$};
\draw[black] plot coordinates {
    (0.0000,0.5000)
    (0.0263,0.4771)
    (0.0526,0.4548)
    (0.0790,0.4330)
    (0.1053,0.4118)
    (0.1316,0.3910)
    (0.1579,0.3707)
    (0.1842,0.3509)
    (0.2105,0.3316)
    (0.2369,0.3126)
    (0.2632,0.2940)
    (0.2895,0.2758)
    (0.3158,0.2580)
    (0.3421,0.2406)
    (0.3685,0.2235)
    (0.3948,0.2067)
    (0.4211,0.1902)
    (0.4474,0.1740)
    (0.4737,0.1581)
    (0.5000,0.1425)
    (0.5264,0.1272)
    (0.5527,0.1121)
    (0.5790,0.0973)
    (0.6053,0.0827)
    (0.6316,0.0684)
    (0.6580,0.0543)
    (0.6843,0.0404)
    (0.7106,0.0267)
    (0.7369,0.0133)
    (0.7632,0.0000)
};
\draw[black] plot coordinates {
    (0.0000,0.6667)
    (0.0263,0.6361)
    (0.0526,0.6064)
    (0.0790,0.5773)
    (0.1053,0.5490)
    (0.1316,0.5213)
    (0.1579,0.4943)
    (0.1842,0.4679)
    (0.2105,0.4421)
    (0.2369,0.4168)
    (0.2632,0.3920)
    (0.2895,0.3678)
    (0.3158,0.3440)
    (0.3421,0.3208)
    (0.3685,0.2979)
    (0.3948,0.2755)
    (0.4211,0.2536)
    (0.4474,0.2320)
    (0.4737,0.2108)
    (0.5000,0.1900)
    (0.5264,0.1696)
    (0.5527,0.1495)
    (0.5790,0.1297)
    (0.6053,0.1103)
    (0.6316,0.0912)
    (0.6580,0.0724)
    (0.6843,0.0539)
    (0.7106,0.0356)
    (0.7369,0.0177)
    (0.7632,0.0000)
};
\draw[black] plot coordinates {
    (0.0000,1.0000)
    (0.0263,0.9542)
    (0.0526,0.9096)
    (0.0790,0.8660)
    (0.1053,0.8235)
    (0.1316,0.7820)
    (0.1579,0.7415)
    (0.1842,0.7019)
    (0.2105,0.6631)
    (0.2369,0.6252)
    (0.2632,0.5881)
    (0.2895,0.5517)
    (0.3158,0.5161)
    (0.3421,0.4812)
    (0.3685,0.4469)
    (0.3948,0.4133)
    (0.4211,0.3804)
    (0.4474,0.3480)
    (0.4737,0.3162)
    (0.5000,0.2850)
    (0.5264,0.2544)
    (0.5527,0.2242)
    (0.5790,0.1946)
    (0.6053,0.1654)
    (0.6316,0.1368)
    (0.6580,0.1085)
    (0.6843,0.0808)
    (0.7106,0.0534)
    (0.7369,0.0265)
    (0.7632,0.0000)
};
\draw[black] plot coordinates {
    (0.0000,1.0000)
    (0.0263,0.9542)
    (0.0526,0.9096)
    (0.0790,0.8660)
    (0.1053,0.8235)
    (0.1316,0.7820)
    (0.1579,0.7415)
    (0.1842,0.7019)
    (0.2105,0.6631)
    (0.2369,0.6252)
    (0.2632,0.5881)
    (0.2895,0.5517)
    (0.3158,0.5161)
    (0.3421,0.4812)
    (0.3685,0.4469)
    (0.3948,0.4133)
    (0.4211,0.3804)
    (0.4474,0.3480)
    (0.4737,0.3162)
    (0.5000,0.2850)
    (0.5264,0.2544)
    (0.5527,0.2242)
    (0.5790,0.1946)
    (0.6053,0.1654)
    (0.6316,0.1368)
    (0.6580,0.1085)
    (0.6843,0.0808)
    (0.7106,0.0534)
    (0.7369,0.0265)
    (0.7632,0.0000)
};
\draw[black] plot coordinates {
    (0.0000,2.1363)
    (0.0489,2.0208)
    (0.0978,1.9105)
    (0.1468,1.8051)
    (0.1957,1.7040)
    (0.2446,1.6070)
    (0.2935,1.5138)
    (0.3424,1.4240)
    (0.3914,1.3374)
    (0.4403,1.2538)
    (0.4892,1.1731)
    (0.5381,1.0949)
    (0.5871,1.0191)
    (0.6360,0.9457)
    (0.6849,0.8744)
    (0.7338,0.8052)
    (0.7827,0.7379)
    (0.8317,0.6724)
    (0.8806,0.6087)
    (0.9295,0.5466)
    (0.9784,0.4860)
    (1.0273,0.4269)
    (1.0763,0.3693)
    (1.1252,0.3129)
    (1.1741,0.2579)
    (1.2230,0.2041)
    (1.2720,0.1514)
    (1.3209,0.0999)
    (1.3698,0.0494)
    (1.4187,0.0000)
};
\draw[black] plot coordinates {
    (0.0000,3.2726)
    (0.0686,3.0743)
    (0.1372,2.8884)
    (0.2058,2.7134)
    (0.2743,2.5480)
    (0.3429,2.3913)
    (0.4115,2.2425)
    (0.4801,2.1006)
    (0.5487,1.9653)
    (0.6173,1.8357)
    (0.6858,1.7116)
    (0.7544,1.5924)
    (0.8230,1.4778)
    (0.8916,1.3674)
    (0.9602,1.2610)
    (1.0288,1.1582)
    (1.0973,1.0588)
    (1.1659,0.9626)
    (1.2345,0.8695)
    (1.3031,0.7791)
    (1.3717,0.6914)
    (1.4403,0.6062)
    (1.5088,0.5233)
    (1.5774,0.4427)
    (1.6460,0.3642)
    (1.7146,0.2878)
    (1.7832,0.2132)
    (1.8518,0.1404)
    (1.9203,0.0694)
    (1.9889,0.0000)
};
\draw[black] plot coordinates {
    (0.0000,4.4089)
    (0.0866,4.1174)
    (0.1732,3.8482)
    (0.2597,3.5983)
    (0.3463,3.3649)
    (0.4329,3.1461)
    (0.5195,2.9401)
    (0.6060,2.7456)
    (0.6926,2.5613)
    (0.7792,2.3861)
    (0.8658,2.2193)
    (0.9524,2.0601)
    (1.0389,1.9078)
    (1.1255,1.7618)
    (1.2121,1.6216)
    (1.2987,1.4868)
    (1.3852,1.3570)
    (1.4718,1.2318)
    (1.5584,1.1110)
    (1.6450,0.9941)
    (1.7316,0.8811)
    (1.8181,0.7715)
    (1.9047,0.6653)
    (1.9913,0.5622)
    (2.0779,0.4620)
    (2.1644,0.3646)
    (2.2510,0.2698)
    (2.3376,0.1776)
    (2.4242,0.0877)
    (2.5107,0.0000)
};
\draw[black] plot coordinates {
    (0.0000,5.5452)
    (0.1034,5.1514)
    (0.2069,4.7930)
    (0.3103,4.4640)
    (0.4138,4.1601)
    (0.5172,3.8776)
    (0.6207,3.6138)
    (0.7241,3.3663)
    (0.8276,3.1332)
    (0.9310,2.9130)
    (1.0345,2.7042)
    (1.1379,2.5058)
    (1.2414,2.3168)
    (1.3448,2.1363)
    (1.4483,1.9636)
    (1.5517,1.7981)
    (1.6552,1.6391)
    (1.7586,1.4863)
    (1.8621,1.3390)
    (1.9655,1.1970)
    (2.0690,1.0598)
    (2.1724,0.9272)
    (2.2759,0.7989)
    (2.3793,0.6745)
    (2.4828,0.5539)
    (2.5862,0.4368)
    (2.6897,0.3230)
    (2.7931,0.2124)
    (2.8966,0.1048)
    (3.0000,0.0000)
};
\node[below] at (0.7632,0.0000) {$R_0$};
\draw (0.7632,-0.0500)--(0.7632,0.0500);
\node[below] at (3.0000,0.0000) {$R_1$};
\draw (3.0000,-0.0500)--(3.0000,0.0500);
\node[left] at (0.0000,0.5000) {$F_0$};
\draw (-0.0500,0.5000)--(0.0500,0.5000);
\node[left] at (0.0000,1.0000) {$\sigma F_0$};
\draw (-0.0500,1.0000)--(0.0500,1.0000);
\node[left] at (0.0000,5.5452) {$F_1$};
\draw (-0.0500,5.5452)--(0.0500,5.5452);
\end{tikzpicture}
&
\begin{tikzpicture}[yscale=\figScaleY, xscale = \figScaleX]
\draw[->] (0,0) -- (3.5000,0.0000) node[right] {$|x|$};
\draw[->] (0,0) -- (0,5.5452+.5) node[above] {$u$};
\draw[black!30!white] plot coordinates {
    (0.0000,5.5452)
    (0.1034,5.1514)
    (0.2069,4.7930)
    (0.3103,4.4640)
    (0.4138,4.1601)
    (0.5172,3.8776)
    (0.6207,3.6138)
    (0.7241,3.3663)
    (0.8276,3.1332)
    (0.9310,2.9130)
    (1.0345,2.7042)
    (1.1379,2.5058)
    (1.2414,2.3168)
    (1.3448,2.1363)
    (1.4483,1.9636)
    (1.5517,1.7981)
    (1.6552,1.6391)
    (1.7586,1.4863)
    (1.8621,1.3390)
    (1.9655,1.1970)
    (2.0690,1.0598)
    (2.1724,0.9272)
    (2.2759,0.7989)
    (2.3793,0.6745)
    (2.4828,0.5539)
    (2.5862,0.4368)
    (2.6897,0.3230)
    (2.7931,0.2124)
    (2.8966,0.1048)
    (3.0000,0.0000)
};
\draw[black!30!white] plot coordinates {
    (0.0000,3.6968)
    (0.1034,3.4343)
    (0.2069,3.1953)
    (0.3103,2.9760)
    (0.4138,2.7734)
    (0.5172,2.5851)
    (0.6207,2.4092)
    (0.7241,2.2442)
    (0.8276,2.0888)
    (0.9310,1.9420)
    (1.0345,1.8028)
    (1.1379,1.6706)
    (1.2414,1.5445)
    (1.3448,1.4242)
    (1.4483,1.3091)
    (1.5517,1.1987)
    (1.6552,1.0928)
    (1.7586,0.9908)
    (1.8621,0.8927)
    (1.9655,0.7980)
    (2.0690,0.7065)
    (2.1724,0.6181)
    (2.2759,0.5326)
    (2.3793,0.4497)
    (2.4828,0.3693)
    (2.5862,0.2912)
    (2.6897,0.2154)
    (2.7931,0.1416)
    (2.8966,0.0699)
    (3.0000,0.0000)
};
\draw[black!30!white] plot coordinates {
    (0.0000,2.7726)
    (0.1034,2.5757)
    (0.2069,2.3965)
    (0.3103,2.2320)
    (0.4138,2.0800)
    (0.5172,1.9388)
    (0.6207,1.8069)
    (0.7241,1.6831)
    (0.8276,1.5666)
    (0.9310,1.4565)
    (1.0345,1.3521)
    (1.1379,1.2529)
    (1.2414,1.1584)
    (1.3448,1.0682)
    (1.4483,0.9818)
    (1.5517,0.8991)
    (1.6552,0.8196)
    (1.7586,0.7431)
    (1.8621,0.6695)
    (1.9655,0.5985)
    (2.0690,0.5299)
    (2.1724,0.4636)
    (2.2759,0.3994)
    (2.3793,0.3372)
    (2.4828,0.2769)
    (2.5862,0.2184)
    (2.6897,0.1615)
    (2.7931,0.1062)
    (2.8966,0.0524)
    (3.0000,0.0000)
};
\draw[black!30!white] plot coordinates {
    (0.0000,2.7726)
    (0.1034,2.5757)
    (0.2069,2.3965)
    (0.3103,2.2320)
    (0.4138,2.0800)
    (0.5172,1.9388)
    (0.6207,1.8069)
    (0.7241,1.6831)
    (0.8276,1.5666)
    (0.9310,1.4565)
    (1.0345,1.3521)
    (1.1379,1.2529)
    (1.2414,1.1584)
    (1.3448,1.0682)
    (1.4483,0.9818)
    (1.5517,0.8991)
    (1.6552,0.8196)
    (1.7586,0.7431)
    (1.8621,0.6695)
    (1.9655,0.5985)
    (2.0690,0.5299)
    (2.1724,0.4636)
    (2.2759,0.3994)
    (2.3793,0.3372)
    (2.4828,0.2769)
    (2.5862,0.2184)
    (2.6897,0.1615)
    (2.7931,0.1062)
    (2.8966,0.0524)
    (3.0000,0.0000)
};
\draw[black!30!white] plot coordinates {
    (0.0000,2.2045)
    (0.0866,2.0587)
    (0.1732,1.9241)
    (0.2597,1.7991)
    (0.3463,1.6825)
    (0.4329,1.5731)
    (0.5195,1.4701)
    (0.6060,1.3728)
    (0.6926,1.2806)
    (0.7792,1.1931)
    (0.8658,1.1097)
    (0.9524,1.0300)
    (1.0389,0.9539)
    (1.1255,0.8809)
    (1.2121,0.8108)
    (1.2987,0.7434)
    (1.3852,0.6785)
    (1.4718,0.6159)
    (1.5584,0.5555)
    (1.6450,0.4971)
    (1.7316,0.4405)
    (1.8181,0.3858)
    (1.9047,0.3326)
    (1.9913,0.2811)
    (2.0779,0.2310)
    (2.1644,0.1823)
    (2.2510,0.1349)
    (2.3376,0.0888)
    (2.4242,0.0438)
    (2.5107,0.0000)
};
\draw[black!30!white] plot coordinates {
    (0.0000,1.6363)
    (0.0686,1.5372)
    (0.1372,1.4442)
    (0.2058,1.3567)
    (0.2743,1.2740)
    (0.3429,1.1957)
    (0.4115,1.1212)
    (0.4801,1.0503)
    (0.5487,0.9826)
    (0.6173,0.9179)
    (0.6858,0.8558)
    (0.7544,0.7962)
    (0.8230,0.7389)
    (0.8916,0.6837)
    (0.9602,0.6305)
    (1.0288,0.5791)
    (1.0973,0.5294)
    (1.1659,0.4813)
    (1.2345,0.4347)
    (1.3031,0.3895)
    (1.3717,0.3457)
    (1.4403,0.3031)
    (1.5088,0.2617)
    (1.5774,0.2214)
    (1.6460,0.1821)
    (1.7146,0.1439)
    (1.7832,0.1066)
    (1.8518,0.0702)
    (1.9203,0.0347)
    (1.9889,0.0000)
};
\draw[black!30!white] plot coordinates {
    (0.0000,1.0682)
    (0.0489,1.0104)
    (0.0978,0.9553)
    (0.1468,0.9025)
    (0.1957,0.8520)
    (0.2446,0.8035)
    (0.2935,0.7569)
    (0.3424,0.7120)
    (0.3914,0.6687)
    (0.4403,0.6269)
    (0.4892,0.5865)
    (0.5381,0.5474)
    (0.5871,0.5096)
    (0.6360,0.4729)
    (0.6849,0.4372)
    (0.7338,0.4026)
    (0.7827,0.3690)
    (0.8317,0.3362)
    (0.8806,0.3043)
    (0.9295,0.2733)
    (0.9784,0.2430)
    (1.0273,0.2135)
    (1.0763,0.1846)
    (1.1252,0.1565)
    (1.1741,0.1289)
    (1.2230,0.1020)
    (1.2720,0.0757)
    (1.3209,0.0499)
    (1.3698,0.0247)
    (1.4187,0.0000)
};
\draw[black!30!white] plot coordinates {
    (0.0000,0.5000)
    (0.0263,0.4771)
    (0.0526,0.4548)
    (0.0790,0.4330)
    (0.1053,0.4118)
    (0.1316,0.3910)
    (0.1579,0.3707)
    (0.1842,0.3509)
    (0.2105,0.3316)
    (0.2369,0.3126)
    (0.2632,0.2940)
    (0.2895,0.2758)
    (0.3158,0.2580)
    (0.3421,0.2406)
    (0.3685,0.2235)
    (0.3948,0.2067)
    (0.4211,0.1902)
    (0.4474,0.1740)
    (0.4737,0.1581)
    (0.5000,0.1425)
    (0.5264,0.1272)
    (0.5527,0.1121)
    (0.5790,0.0973)
    (0.6053,0.0827)
    (0.6316,0.0684)
    (0.6580,0.0543)
    (0.6843,0.0404)
    (0.7106,0.0267)
    (0.7369,0.0133)
    (0.7632,0.0000)
};
\node[below] at (0.7632,0.0000) {$R_0$};
\draw (0.7632,-0.0500)--(0.7632,0.0500);
\node[below] at (3.0000,0.0000) {$R_1$};
\draw (3.0000,-0.0500)--(3.0000,0.0500);
\node[left] at (0.0000,0.5000) {$F_0$};
\draw (-0.0500,0.5000)--(0.0500,0.5000);
\node[left] at (0.0000,5.5452) {$F_1$};
\draw (-0.0500,5.5452)--(0.0500,5.5452);
\node[left] at (0.0000,2.7726) {$\frac{1}{\sigma} F_1$};
\draw (-0.0500,2.7726)--(0.0500,2.7726);
\end{tikzpicture}
\\
\begin{tikzpicture}[yscale=\figScaleY, xscale = \figScaleX]
\draw[->] (0,0) -- (0,5.5452) node[right] {$F$};
\draw[->] (0,0) -- (4.5000,0) node[above] {$R$};
\draw[smooth,gray] plot coordinates {
    (1.0000,0.0000)
    (1.1118,0.2358)
    (1.2301,0.5095)
    (1.3520,0.8156)
    (1.4761,1.1496)
    (1.6015,1.5085)
    (1.7278,1.8896)
    (1.8545,2.2907)
    (1.9816,2.7103)
    (2.1088,3.1468)
    (2.2361,3.5990)
    (2.3635,4.0658)
    (2.4908,4.5463)
    (2.6181,5.0397)
    (2.7454,5.5452)
};
\draw[smooth,gray] plot coordinates {
    (1.0000,0.0000)
    (1.2143,0.2358)
    (1.4286,0.5095)
    (1.6429,0.8156)
    (1.8571,1.1496)
    (2.0714,1.5085)
    (2.2857,1.8896)
    (2.5000,2.2907)
    (2.7143,2.7103)
    (2.9286,3.1468)
    (3.1429,3.5990)
    (3.3571,4.0658)
    (3.5714,4.5463)
    (3.7857,5.0397)
    (4.0000,5.5452)
};
\draw[->] (1.6429,0.8156) -- (1.6429,1.2234) ;
\draw (1.6429,0.8156) -- (1.6429,1.6311) ;
\draw[smooth,black,->] plot coordinates {
    (1.6429,1.6311)
    (1.7278,1.8896)
    (1.8545,2.2907)
    (1.9816,2.7103)
};
\draw[smooth,black] plot coordinates {
    (1.9816,2.7103)
    (2.1088,3.1468)
    (2.2361,3.5990)
    (2.3635,4.0658)
    (2.4908,4.5463)
};
\draw (2.4908,3.4097) -- (2.4908,2.2732) ;
\draw[->] (2.4908,4.5463) -- (2.4908,3.4097) ;
\draw[smooth,black] plot coordinates {
    (1.6429,0.8156)
    (1.8571,1.1496)
    (2.0714,1.5085)
};
\draw[smooth,black,<-] plot coordinates {
    (2.0714,1.5085)
    (2.2857,1.8896)
    (2.4908,2.2732)
};
\filldraw (1.6429,0.8156) circle (0.0250);
\draw[dotted] (1.6429,0.8156) -- (0.0000,0.8156);
\node[left] at (0.0000,0.8156) {$F_0$};
\draw (-0.0500,0.8156)--(0.0500,0.8156);
\node[below] at (1.6429,0.0000) {$R_0$};
\draw (1.6429,-0.0500)--(1.6429,0.0500);
\filldraw (1.6429,1.6311) circle (0.0250);
\node[left] at (0.0000,1.6311) {$\sigma F_0$};
\draw[dotted] (1.6429,1.6311) -- (0.0000,1.6311);
\draw (-0.0500,1.6311)--(0.0500,1.6311);
\filldraw (2.4908,4.5463) circle (0.0250);
\node[left] at (0.0000,4.5463) {$F_1$};
\draw[dotted] (2.4908,4.5463) -- (0.0000,4.5463);
\draw (-0.0500,4.5463)--(0.0500,4.5463);
\filldraw (2.4908,2.2732) circle (0.0250);
\node[below] at (2.4908,0.0000) {$R_1$};
\draw (2.4908,-0.0500)--(2.4908,0.0500);
\node[left] at (0.0000,2.2732) {$\frac{1}{\sigma}F_1$};
\draw[dotted] (2.4908,2.2732) -- (0.0000,2.2732);
\draw (-0.0500,2.2732)--(0.0500,2.2732);
\node[right] at (2.7454,5.5452) {$\gamma_+$};
\node[right] at (4.0000,5.5452) {$\gamma_-$};
\end{tikzpicture}
&
\begin{tikzpicture}[yscale=\figScaleY, xscale = \figScaleX]
%\begin{tikzpicture}
%\draw[->] (0,0) -- (4.5,0.0000) node[right] {$t$};
%\draw[->] (0,0) -- (0,4.5) node[above] {$F$};
%\draw (2, 4) parabola (0,.5);
%\draw (2,4) parabola (4,.5);
%\node[below] at (4,0) {$T$};
%\draw (4,-.05)--(4,.05);
%\node[left] at (0.0000,0.5000) {$F_0$};
%\draw (-0.0500,0.5000)--(0.0500,0.5000);
%\node[left] at (0.0000,4.0000) {$F_1$};
%\draw (-0.0500,4.0000)--(0.0500,4.0000);
%\node[left] at (0.0000,2.0000) {$\frac{1}{\sigma} F_1$};
%\draw (-0.0500,2.0000)--(0.0500,2.0000);
%\end{tikzpicture}

\draw[->] (0,0) -- (0,5.5452) node[right] {$F$};
\draw[->] (0,0) -- (4.5000,0) node[above] {$t$};

\draw (2,4.54) parabola (0,.8156);
\draw (2,4.54) parabola (4,.8156);
\node[below] at (4,0) {$T$};
\draw (4,-.05)--(4,.05);
\node[left] at (0.0000,0.8156) {$F_0$};
\draw (-0.0500,0.8156)--(0.0500,0.8156);
\node[left] at (0.0000,1.6311) {$\sigma F_0$};
\draw (-0.0500,1.6311)--(0.0500,1.6311);
\node[left] at (0.0000,4.5463) {$F_1$};
\draw (-0.0500,4.5463)--(0.0500,4.5463);

\node[left] at (0.0000,2.2732) {$\frac{1}{\sigma}F_1$};
\draw (-0.0500,2.2732)--(0.0500,2.2732);
\end{tikzpicture}
\end{tabular}
\caption{Hysteresis loop for radial solutions. Top left: profiles of advancing solution starting from $(R_0,F_0) \in \gamma_-$. Top right: profiles of receding solution starting at $(R_1,F_1) \in \gamma_+$. Bottom left: hysteresis loop diagram in $(R,F)$-plane. Bottom right: plot of $F(t)$. Parameters used to generate top and bottom set of pictures do not exactly match in order to better display the respective graphs.  The factor $\sigma$ is defined to be $\big(\frac{1+\mu_+}{1-\mu_-}\big)^{1/2}$.}
\label{f.hysteresis-loop}
\end{figure}
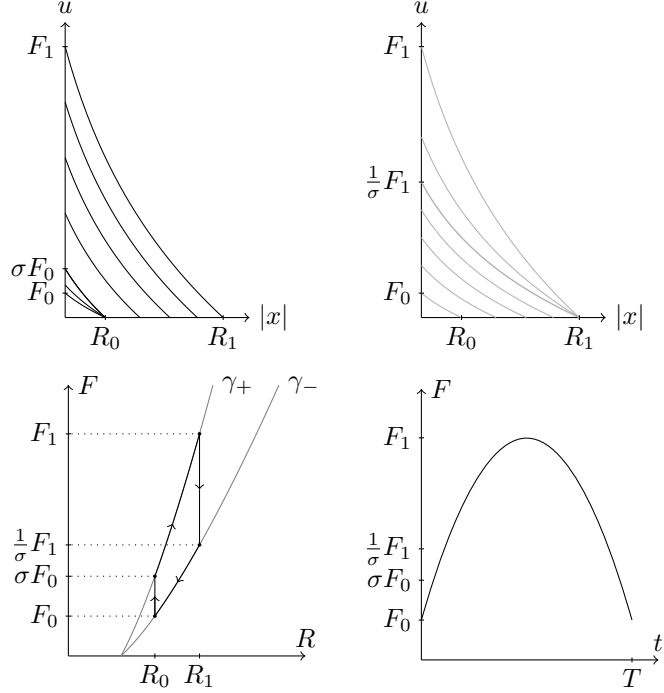

Denote the family of radially symmetric solutions $v_{\lambda,F}$ of
\[ 
\left\{\begin{aligned}
\Delta v_{\lambda,F} &= 0 \ \hbox{ in } \ \Omega(v_{\lambda,F}) \setminus \overline{B_1}, \\
v_{\lambda,F} &= F \ \hbox{ on } \ \partial B_1,\\
|\grad v_{\lambda,F}| &= \lambda \ \hbox{ on } \ \partial \Omega(v_{\lambda,F}) \setminus \overline{B_1}.
\end{aligned}\right.
\]
 These can be explicitly computed as
\[v_{\lambda,F}(x) = F\left(1-\frac{\log |x|}{\log \zeta(\lambda^{-1}F)}\right)_+\]
 where $\zeta : (0,\infty) \to (1,\infty)$ strictly monotone increasing is the inverse of $R \mapsto R \log R$ i.e.
 \[\zeta(s)\log \zeta(s) = s \ \hbox{ for } \ s > 0.\]
 Note that the radius of the support of $v_{\lambda,F}$ is $R = \zeta(\lambda^{-1}F)$.
 
 Now let us consider the radially symmetric solution of \eqref{e.stability-condition-intro}--\eqref{e.dynamic-slope-condition-intro} in $U = \R^2 \setminus B_1$ subject to the boundary condition $u(t) = F(t)$ on $\partial B_1$ with some forcing $F(t)$ and
\[ u_0 = v_{(1-\mu_-)^{1/2},F_0}.\]
The solution must be of the form
\[u(t) = v_{\lambda(t),F(t)} \ \hbox{ with } \ \lambda(t)^2 \in [1-\mu_-,1+\mu_+] \]
and the dynamic slope condition becomes
\[\pm\frac{d}{dt}R(t) = \pm\frac{d}{dt}\zeta(\lambda(t)^{-1}F(t)) >0 \ \hbox{ implies } \ \lambda(t) = 1\pm\mu_\pm.\]
In other words 
\[\pm\frac{d}{dt}R(t) > 0  \ \hbox{ implies } \ R(t) = \zeta((1\pm\mu_\pm)^{1/2}F(t)).\]
or viewing matters in the $(R,F)$ plane the state of the system is always in the region
\[\mathcal{S}  = \bigg\{(R,F) \in [1,\infty)\times (0,\infty) : \ \zeta((1+\mu_+)^{-1/2}F) \leq R \leq \zeta((1-\mu_-)^{-1/2}F)\bigg\}\]
and $R$ can only increase/decrease while on the respective boundary curves
\[\gamma_\pm = \{(R,F): \ R = \zeta((1\pm\mu_\pm)^{-1}F)\}.\]

\section{Basic space and time regularity properties of energy solutions} % sec 5
\label{s.basic-energy}

In this section we study energy solutions \EE{} establishing existence and various spatial and temporal regularity properties.

In \sref{energy-soln-spatial-reg} we recall several results from the literature on inward and outward minimizers of one-phase functionals.  These results, in particular, also apply to states satisfying the global stability condition of energy solutions \EE{}.

In \sref{energy-soln-temporal-reg} we show $BV$ time regularity of energy solutions using a typical Gr\"onwall argument via the energy dissipation inequality, see \lref{energy-soln-time-reg}.  With this regularity we can also establish that the energy dissipation balance holds with equality, see \lref{energy-diss-eq}.

Finally in \sref{energy-soln-lr-limits} we establish the main temporal regularity result of the section, \pref{lr-limits}. Abstract theory of bounded variation maps from $[0,T]$ into a metric space shows that $u(t)$ has left and right limits $u_\ell(t)$ and $u_r(t)$ at every time.  Then we use the monotonicity structure of the problem to show that the lower and upper-semicontinuous envelopes of an energy solution are exactly $u_\ell(t) \wedge u_r(t)$ and $u_\ell(t) \vee u_r(t)$ and these are energy solutions as well.  This is independently interesting, elucidating the structure of the jump discontinuities, and it also plays a key role in establishing the dynamic slope condition in \sref{energy-soln-motion-law}.

\subsection{inward and outward minimality and spatial regularity}\label{s.energy-soln-spatial-reg}
Let $\Lambda$ be an open region that contains  $\R^d\setminus U$.  In this section we will discuss the regularity of minimizers of the \emph{dissipation augmented energy}, defined above in \eref{E-def-intro},
\begin{equation*}\label{e.augmented-energy-def}
\mathcal{E}(\Lambda,u) := \mathcal{J}(u) + \textup{Diss}(\Lambda,\Omega(u)).
\end{equation*}
We will also abuse notation to write $\mathcal{E}(v,u)  = \mathcal{E}(\Omega(v),u)$ when $v$ is another non-negative function on $\R^n$.  

Note that if $u$ is an energy solution on $[0,T]$, from the global stability property at each time $u(t)$ is a minimizer for $\mathcal{E}(u(t),\cdot)$.

It is convenient to make a connection with the notions of inward and outward minimality for the Bernoulli functional $\mathcal{J}$.  First of all we introduce the notation for ${Q}>0$ and an open region $U \subset \R^d$ 
\[\mathcal{J}_{Q}(u) := \int_{U} |\grad u|^2 + {Q}{\bf 1}_{\{u>0\}} \ dx.\]
We have written and will continue to write $\mathcal{J} = \mathcal{J}_1$ abusing notation. 

Next we introduce the notions of inward and outward minimizers. These notions have appeared in the literature before, for example see the book \cite{VelichkovBook} for further references.
\begin{definition}
 $u \in H^1(U)$ is an {\it outward (resp. inward) minimizer} of $\mathcal{J}_{Q}(\cdot;U)$ if
\begin{enumerate}
\item The set $\Omega(u) = \{u>0\}$ is open and $u$ is harmonic in $\Omega(u)$.
\item For any $v \in u + H^1_0(U)$ with $v \geq u$ (resp. $v \leq u$)
\[\mathcal{J}_{Q}(u) \leq \mathcal{J}_{Q}(v).\]
\end{enumerate}
\end{definition}

Minimizers of $\mathcal{E}(\Lambda,\cdot)$ have a natural inward/outward $\mathcal{J}_{1\pm \mu_\pm}$ minimality property.

\begin{lemma}\label{l.inward-outward-min-props}
Let $\Lambda$ be an open set that contains $\R^d \setminus U$. Suppose that $u$ is a global minimizer of
\[ \mathcal{J}(v) + \textup{Diss}(\Lambda,\Omega(v)) \ \hbox{ over } \ v \in u + H^1_0(U).\]
Then $u$ is an outward minimizer for $\mathcal{J}_{1+\mu_+}$ and an inward minimizer for $\mathcal{J}_{1-\mu_-}$.  Furthermore if $B_r$ lies outside of $\overline{\Lambda}$ then $u$ minimizes $\mathcal{J}_{1+\mu_+}$ in $B_r$, and if $B_r \subset \Lambda$ then $u$ minimizes $\mathcal{J}_{1-\mu_-}$ in $B_r$.
\end{lemma}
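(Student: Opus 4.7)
The plan is to exploit the explicit form of the dissipation distance $\Diss(\Lambda,\Omega(v)) = \mu_+|\Omega(v)\setminus\Lambda| + \mu_-|\Lambda\setminus\Omega(v)|$ and compare the given global minimization directly against appropriately chosen test functions, so each of the four claims reduces to a short set-algebra computation.

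For outward minimality, I would test against any $v \in u + H^1_0(U)$ with $v \geq u$, so that $\Omega(v) \supseteq \Omega(u)$. Splitting the symmetric difference over $\Lambda$ and its complement gives
\begin{equation*}
\Diss(\Lambda,\Omega(v)) - \Diss(\Lambda,\Omega(u)) = \mu_+\bigl|(\Omega(v)\setminus\Omega(u))\setminus\Lambda\bigr| - \mu_-\bigl|\Lambda\cap(\Omega(v)\setminus\Omega(u))\bigr|,
\end{equation*}
which is trivially bounded above by $\mu_+|\Omega(v)\setminus\Omega(u)|$. Since $\mathcal{J}(v)-\mathcal{J}(u) = \int(|\grad v|^2 - |\grad u|^2)\,dx + |\Omega(v)\setminus\Omega(u)|$, the global minimality inequality rearranges to $\mathcal{J}_{1+\mu_+}(u) \leq \mathcal{J}_{1+\mu_+}(v)$, which is exactly outward minimality. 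Inward minimality for $\mathcal{J}_{1-\mu_-}$ is symmetric: test against $v \leq u$, compute $\Diss(\Lambda,\Omega(v))-\Diss(\Lambda,\Omega(u)) = -\mu_+|(\Omega(u)\setminus\Omega(v))\setminus\Lambda| + \mu_-|\Lambda\cap(\Omega(u)\setminus\Omega(v))| \leq \mu_-|\Omega(u)\setminus\Omega(v)|$, and combine.

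For the ball-localized claims, the key observation is that no monotonicity constraint on $v$ is needed because one of the two dissipation terms becomes $v$-independent. If $B_r \cap \overline{\Lambda} = \emptyset$ and $v$ agrees with $u$ outside $B_r$, then $\Omega(v)\triangle\Omega(u) \subset B_r$ is disjoint from $\Lambda$, so $|\Lambda\setminus\Omega(v)| = |\Lambda\setminus\Omega(u)|$ is constant, while $|\Omega(v)\setminus\Lambda| = |\Omega(v)\cap B_r| + |\Omega(u)\setminus\Lambda\setminus B_r|$ only has the first summand depending on $v$. Plugging into the global minimization and cancelling $v$-independent terms (including $|\Omega(v)\setminus B_r| = |\Omega(u)\setminus B_r|$) leaves exactly
\begin{equation*}
\int_{B_r}|\grad u|^2 + (1+\mu_+)|\Omega(u)\cap B_r| \leq \int_{B_r}|\grad v|^2 + (1+\mu_+)|\Omega(v)\cap B_r|,
\end{equation*}
i.e.\ the $\mathcal{J}_{1+\mu_+}$ minimization in $B_r$. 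The case $B_r \subset \Lambda$ is entirely analogous: the $\mu_+$ term becomes $v$-independent, and the $\mu_-$ term contributes $-\mu_-|\Omega(v)\cap B_r|$, which combines with the bulk $|\Omega(v)\cap B_r|$ coming from $\mathcal{J}(v)$ to yield the coefficient $1-\mu_-$.

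I do not anticipate any real obstacle. The entire lemma is bookkeeping for how the nonsymmetric dissipation distance behaves under monotone or spatially localized perturbations; the only subtlety is to track which dissipation term is negligible or constant in each of the two geometric configurations, so that the effective comparison functional becomes $\mathcal{J}_{1\pm\mu_\pm}$.
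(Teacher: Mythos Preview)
Your proposal is correct and follows essentially the same approach as the paper: both arguments reduce to the same set-algebra observations about how $\Diss(\Lambda,\cdot)$ behaves under monotone or ball-localized perturbations. The only cosmetic difference is that the paper rewrites $\mathcal{J}_{1+\mu_+}(u)$ as $\mathcal{J}_1(u)+\Diss(\Lambda,\Omega(u))-\mu_-|\Lambda\setminus\Omega(u)|+\mu_+|\Omega(u)\cap\Lambda|$ and tracks the two correction terms, whereas you compute $\Diss(\Lambda,\Omega(v))-\Diss(\Lambda,\Omega(u))$ directly; the content is identical.
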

\begin{proof}
Suppose $v \in u+H^1_0(U)$ with $v \geq u$.  Then $\{v>0\} \supset \{u>0\}$, so
\begin{equation}\label{e.outward-min-containments}
|\Lambda \setminus \Omega(u)| \geq |\Lambda \setminus \Omega(v)| \ \hbox{ and } \ |\Omega(u) \cap \Lambda| \leq |\Omega(v) \cap \Lambda|.
\end{equation}

We can apply \eref{outward-min-containments} along with the minimality property of $u$ to obtain
\begin{align*}
\mathcal{J}_{1+\mu_+}(u) &= \mathcal{J}_1(u) + \mu_+|\Omega(u)| \\
&=\mathcal{J}_1(u) + \mu_+|\Omega(u) \setminus \Lambda| + \mu_+|\Omega(u) \cap \Lambda|\\
&= \mathcal{J}_1(u) + \textup{Diss}(\Lambda,\Omega(u)) - \mu_-|\Lambda \setminus \Omega(u)|+ \mu_+|\Omega(u) \cap \Lambda|\\
&\leq \mathcal{J}_1(v) + \textup{Diss}(\Lambda,\Omega(v)) - \mu_-|\Lambda \setminus \Omega(v)|+ \mu_+|\Omega(v) \cap \Lambda|\\
&=\mathcal{J}_{1+\mu_+}(v).
\end{align*}
 Lastly note that if $v = u$ outside of  $B_r\subset \R^d \setminus \Lambda$ then \eref{outward-min-containments} holds with equalities.

 Symmetrical computations give the (inward) minimality property for $\mathcal{J}_{1-\mu_-}$.  
 \end{proof}
\subsubsection{Viscosity solution properties of inward / outward minimizers} The notions of inward and outward minimizers are reminiscent of viscosity sub and supersolutions.  Indeed there is a direct correspondence between the two notions.
\begin{lemma}\label{l.inward-outward-min-implies-vs}
Suppose that $S>0$ and $u$ is an inward minimizer of $\mathcal{J}_{S}$ in a domain $U$.  Then in the viscosity sense
\[|\grad u|^2 \geq {Q}  \ \hbox{ on } \partial \{u>0\} \cap U.\]
Similarly, if $u$ is an outward minimizer of $\mathcal{J}_{S}$ then in the viscosity sense
\[|\grad u|^2 \leq {Q}  \ \hbox{ on } \partial \{u>0\} \cap U.\]
\end{lemma}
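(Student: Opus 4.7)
The plan is to prove each of the two inequalities by contradiction, via an explicit inward (respectively outward) perturbation, treating the inward case in detail since the outward case is entirely dual. Recall that $|\nabla u|^2 \geq Q$ on $\partial \{u>0\}$ in the viscosity sense means: whenever $\varphi \in C^1$ is a test function touching $u$ from above at a free boundary point $x_0 \in \partial \{u > 0\} \cap U$ with $\{\varphi > 0\}$ a $C^1$ domain containing $x_0$ on its boundary, and $u \leq \varphi$ locally in $\{\varphi > 0\}$ with $\varphi(x_0) = 0$, then the nontangential gradient $|\nabla \varphi(x_0)|^2 \geq Q$. So suppose for contradiction that such $\varphi$ exists with $\alpha^2 := |\nabla \varphi(x_0)|^2 < Q$. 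After translation and rotation we can take $x_0 = 0$ and $\nabla \varphi(0) = \alpha e_d$, so that locally the set $\{u > 0\}$ is trapped in $\{x_d > -o(|x|)\}$ and $u(x) \leq \alpha (x \cdot e_d)_+ + o(|x|)$. Lipschitz continuity and non-degeneracy of $u$, standard consequences of inward/outward minimality (see \cite{VelichkovBook}), make both statements meaningful.

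The next step is the construction of an inward competitor. Work at small scale by setting $u_r(y) := r^{-1} u(ry)$; each $u_r$ is an inward minimizer of $\mathcal{J}_Q$ on $r^{-1}U$, and along a subsequence $u_r$ converges uniformly on $\overline{B_1}$ to a global blow-up $u_0$ satisfying $u_0 \leq \alpha (y\cdot e_d)_+$ with equality at the origin. Fix a small $\delta > 0$ with $\alpha^2 + 2\delta < Q$, and define $v_r$ on $B_1$ by setting $v_r \equiv 0$ on $B_1 \cap \{y_d \leq \delta\}$, and on the half-ball $D_\delta := B_1 \cap \{y_d > \delta\}$ letting $v_r$ be the harmonic function with boundary data $u_r$ on $\partial B_1 \cap \{y_d > \delta\}$ and $0$ on $\{y_d = \delta\} \cap B_1$; extend by $u_r$ outside $B_1$. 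Then $v_r \in u_r + H^1_0(B_1)$, $\Omega(v_r) \subsetneq \Omega(u_r)$, and the strip $\Omega(u_r) \cap \{0 < y_d \leq \delta\}$ lies entirely in the removed region for $r$ small (by the trapping of the free boundary established above). This strip has volume at least $c_d \delta$ for a dimensional constant $c_d > 0$, by non-degeneracy of $u_r$.

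Thus the potential term of $\mathcal{J}_Q$ drops by at least $Q c_d \delta$ in passing from $u_r$ to $v_r$. For the Dirichlet energy, the harmonic replacement in $D_\delta$ is the minimizer of the Dirichlet integral subject to its boundary data, so one compares $\int_{B_1} |\nabla u_r|^2 - \int_{B_1} |\nabla v_r|^2$ with the corresponding difference for the linearized profiles $\alpha (y_d)_+$ versus $\alpha(y_d - \delta)_+$, which is exactly $\alpha^2 c_d \delta + O(\delta^2)$ by direct computation on a half-strip. A careful bookkeeping, using the uniform convergence $u_r \to u_0$ together with standard energy comparison (harmonic replacement lowers Dirichlet energy), gives
\begin{equation*}
\mathcal{J}_Q(v_r) - \mathcal{J}_Q(u_r) \leq (\alpha^2 - Q)\, c_d\, \delta + o_{r \to 0}(1) + O(\delta^2),
\end{equation*}
which becomes strictly negative for $\delta$ small and then $r$ small, contradicting the inward minimality of $u_r$. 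The outward case is identical up to sign, taking instead a test function touching $u$ from below with $|\nabla \varphi(x_0)|^2 > Q$, harmonically extending $u$ into a thin outward layer, and showing the Dirichlet energy savings beat the area cost.

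The main obstacle is justifying the Dirichlet-energy comparison rigorously, since the convergence $u_r \to u_0$ is merely uniform, not $H^1$. The fix is to exploit that $u_r$ is harmonic inside $\Omega(u_r)$ and to represent both Dirichlet integrals via boundary integrals (integration by parts against $u_r$ and $v_r$), so that the estimate reduces to control of normal-derivative traces on the cutting hyperplane $\{y_d = \delta\}$ and on $\partial B_1$; this is where uniform convergence plus interior Lipschitz estimates for harmonic functions suffice. Alternatively, one can bypass blow-ups and work at a fixed small scale $r$ using $\varphi$ itself as the profile that defines the competitor, as in \cite{VelichkovBook}; this is arguably cleaner but requires one to commit to a more specific form of the viscosity test function in the definition.
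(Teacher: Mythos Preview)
Your blow-up/harmonic-replacement route differs substantially from the paper's argument, and as written it has genuine gaps. The paper does exactly what you mention as an alternative in your final paragraph: it works directly with the test function $\varphi$ (assumed WLOG strictly superharmonic at $x_0$), sets $u_\delta := u \wedge (\varphi - \delta)_+$, and uses an elementary integration-by-parts inequality (Lemma~\ref{l.intbyparts2}) to compare Dirichlet energies. The competitor $u_\delta$ is automatically $\leq u$, the perturbation is localized by strictness of the touching, and the energy difference is bounded below by $\int_{\Omega(u)\setminus\Omega(u_\delta)} (Q - |\nabla\varphi|^2)\,dx \geq (Q - |p|^2 - o_\delta(1))|\Omega(u)\setminus\Omega(u_\delta)|$, giving the contradiction in two lines.

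Your main approach has three real problems. First, you invoke Lipschitz continuity of $u$ to extract a convergent blow-up, but the lemma is stated for an inward minimizer \emph{alone}, which gives non-degeneracy but not a Lipschitz bound; the blow-up may not converge. Second, and more serious, your competitor $v_r$ need not satisfy $v_r \leq u_r$, which is what inward minimality requires: on $D_\delta$ your $v_r$ is harmonic and strictly positive (if $u_r>0$ anywhere on $\partial B_1\cap\{y_d>\delta\}$), while the free boundary of $u_r$ could enter $D_\delta$, so $u_r=0<v_r$ at interior points. Even your weaker claim $\Omega(v_r)\subset\Omega(u_r)$ fails for the same reason. Third, the claimed volume lower bound $|\Omega(u_r)\cap\{0<y_d\leq\delta\}|\geq c_d\delta$ does not follow from non-degeneracy at a single point; it requires density estimates, which again need both inward and outward minimality. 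These issues are fixable in the context where $u$ is both an inward and outward minimizer, but not for the lemma as stated---and even then the energy bookkeeping you flag as the ``main obstacle'' is substantially more delicate than the paper's two-line computation via $u\wedge(\varphi-\delta)_+$.
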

In general the implication cannot go the other way: there are viscosity solutions, i.e. stationary points of the energy, which are not energy minimizers. 

It is well known that such inward and outward minimality properties imply the corresponding viscosity solution conditions, see for example \cite[Proposition 7.1]{VelichkovBook}. We present a proof anyway since we will use similar, but more involved, computations in the proof of \tref{ae-viscosity-prop}.

Before proceeding with the proof let us write down a corollary of \lref{inward-outward-min-props} and \lref{inward-outward-min-implies-vs} for energy solutions.
\begin{corollary}[Stability implies slope in pinning interval]\label{c.energy-stability-implies-viscosity-soln}
Suppose that $u$ is an energy solution on $[0,T]$, then  for each time $u(t)$ is a viscosity solution of
\[ 1-\mu_- \leq |\grad u(t)|^2 \leq 1+\mu_+ \quad \hbox{on } \partial \{u(t)>0\} \cap U.\]
\end{corollary}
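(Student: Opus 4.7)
The proof is essentially an immediate concatenation of the two lemmas just stated, so the plan is quite short. The plan is to fix a time $t \in [0,T]$ and apply \lref{inward-outward-min-props} with $\Lambda = \Omega(u(t))$, then invoke \lref{inward-outward-min-implies-vs} twice.

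More precisely, the global stability condition (2) in \dref{energy_solution} says exactly that $u(t) \in u(t) + H^1_0(U)$ minimizes
\[ w \mapsto \mathcal{J}(w) + \Diss(u(t), w) = \mathcal{J}(w) + \Diss(\Omega(u(t)), \Omega(w)) \]
over $w \in u(t) + H^1_0(U)$. Setting $\Lambda := \Omega(u(t))$ (which contains $\R^d \setminus U$ up to the forcing boundary, or more accurately we pair it with the definition where $\Lambda$ is the relevant reference set for the dissipation), the hypothesis of \lref{inward-outward-min-props} is met. Hence $u(t)$ is an outward minimizer for $\mathcal{J}_{1+\mu_+}$ and an inward minimizer for $\mathcal{J}_{1-\mu_-}$ in $U$.

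Applying \lref{inward-outward-min-implies-vs} to the outward minimality property yields the viscosity inequality $|\grad u(t)|^2 \leq 1+\mu_+$ on $\partial \{u(t)>0\} \cap U$, and applying it to the inward minimality property yields $|\grad u(t)|^2 \geq 1-\mu_-$ on the same set. Combining the two gives the pinning interval
\[ 1-\mu_- \leq |\grad u(t)|^2 \leq 1+\mu_+ \quad \text{on } \partial \{u(t)>0\} \cap U \]
in the viscosity sense, as claimed.

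There is no real obstacle here; the only bookkeeping point worth flagging is ensuring that the global stability condition, which is phrased with the dissipation measured against $u(t)$ itself, lines up with the hypothesis of \lref{inward-outward-min-props}, which asks $\Lambda$ to contain $\R^d \setminus U$. This is handled by interpreting $\Omega(u(t))$ as extended by the exterior set implicit in the boundary data $F(t) > 0$ (so that $u(t) > 0$ on a neighborhood of $\partial U$ in the exterior direction), or equivalently by noting that the dissipation term and the energy only see what happens inside $U$ where competitors are taken, so the proof of \lref{inward-outward-min-props} goes through verbatim with $\Lambda = \Omega(u(t))$.
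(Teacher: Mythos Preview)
Your proposal is correct and matches the paper's approach exactly: the paper presents this corollary without an explicit proof, stating it simply as ``a corollary of \lref{inward-outward-min-props} and \lref{inward-outward-min-implies-vs} for energy solutions,'' which is precisely the concatenation you carry out. Your bookkeeping remark about $\Lambda \supset \R^d \setminus U$ is a fair observation, and your resolution (that the dissipation and energy only see competitors inside $U$, so the computation in \lref{inward-outward-min-props} goes through) is the right one.
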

Thus energy solutions satisfy the stability condition \eref{stability-condition-intro}.

\begin{proof} [Proof of \lref{inward-outward-min-implies-vs}]

{\bf (Subsolution)} Suppose that a smooth test function $\varphi$ touches $u$ from above in $\overline{\Omega(u)}$ strictly at $x_0 \in \partial \Omega(u)$ with $p:= \nabla\varphi(x_0) \neq 0$ and $\Delta\varphi (x_0) < 0$.  Let $\delta>0$ and consider the comparison function
\[ u_\delta(x)  := u(x) \wedge (\varphi(x) - \delta)_+.\]
Since the touching is strict $ \{u>0\} \setminus \{u_\delta>0\}$ is contained in a ball of radius $o_\delta(1)$ around $x_0$ as $\delta \to 0$.  In particular we can assume that $\delta$ is small enough so that $\varphi$ is superharmonic in $\{u_\delta < u\}$. By the inward minimality of $u$
\[\mathcal{J}_{Q}(u) \leq \mathcal{J}_{Q}(u_\delta).\]
Now note that $u_\delta$ extends to a superharmonic function in $\{u>0\}$ by
\[\bar{u}_\delta(x) :=\begin{cases} u_\delta(x) &x\in \Omega(u_\delta)\\
\varphi(x) - \delta &x \in \Omega(u) \setminus \Omega(u_\delta).
\end{cases} \]
So we can apply {\lref{intbyparts2}}, formula \eref{intbyparts2-supersoln}, and we find
\begin{align*} 
\mathcal{J}_{Q}(u) - \mathcal{J}_{Q}(u_\delta) 
& \geq \int_{\{u>0\} \setminus \{u_\delta>0\}}{Q}-|\nabla\varphi(x)|^2  \ dx\\
& \geq [({Q}-|p|^2)-o_{\delta}(1)]|\{{u}>0\} \setminus \{{u_\delta}>0\}|.
\end{align*}
Combining the previous
\[[({Q}-|p|^2)-o_{\delta}(1)]|\{{u}>0\} \setminus \{{u_\delta}>0\}| \leq 0\]
 so dividing through by $|\{{u}>0\} \setminus \{{u_\delta}>0\}|>0$ and taking $\delta \to 0$ we find
\[ {Q} \leq |p|^2.\]

{\bf(Supersolution)} Suppose that a smooth test function $\varphi$ touches $u$ from below strictly at $x_0 \in \partial \Omega(u)$ with $p:= \nabla \varphi(x_0) \neq 0$ and $\Delta\varphi (x_0) > 0$.  Let $\delta>0$ and consider the comparison function
\[ u_\delta(x)  = u(x) \vee (\varphi(x) + \delta).\]

Since the touching is strict $\{u_\delta>u\}$ is contained in a ball of radius $o_\delta(1)$ around $x_0$ as $\delta \to 0$.  In particular we can assume that $\delta$ is small enough so that $\Delta u_\delta \geq 0$. By the outward minimality property of $u$
\[
 \mathcal{J}_{S}(u) \leq \mathcal{J}_{S}(u_\delta).
\]
Applying {\lref{intbyparts2} (note $u_\delta$ not harmonic in $\{u_\delta > 0\}$ but in general subharmonic)}  we find
\begin{align*} 
\mathcal{J}_{S}(u) - \mathcal{J}_{S}(u_\delta) &\geq  \int_{\{u_\delta>0\} \setminus \{u>0\}}|\nabla u_\delta|^2 - {Q} \ dx \\
& =  \int_{\{u_\delta>0\} \setminus \{u>0\}}|\nabla\varphi(x)|^2 - {Q} \ dx\\
& \geq [(|p|^2-{Q})-o_{\delta}(1)]|\{u_\delta>0\} \setminus \{u>0\}|.
\end{align*}
Combining the previous 
\[ [(|p|^2-Q)-o_{\delta}(1)]|\{u_\delta>0\} \setminus \{u>0\}|\leq 0\]
so dividing through by $|\{u_\delta>0\} \setminus \{u>0\}|>0$ and taking $\delta \to 0$ we find
\[ |p|^2 \leq {Q} .\]
\end{proof}

\subsubsection{Regularity properties of inward / outward minimizers} Next we collect several basic regularity results of $\mathcal{J}_{Q}$ inward / outward minimizers.

\begin{lemma}\label{l.initial-fb-regularity}
\begin{enumerate}[label = (\roman*)]
\item (Lipschitz estimate) There is $C(d) \geq 1$ so that for any viscosity solution $u$  of
\[\Delta u = 0 \ \hbox{ in } \ \Omega(u) \cap B_2 \ \hbox{ and } \ |\grad u|^2 \leq {Q} \ \hbox{ on } \ \partial \Omega(u) \cap B_2\]
 we have
\[\|\grad u\|_{L^\infty(B_1)} \leq C(\sqrt{Q}+\|\grad u\|_{L^2(B_2)}).\]
In particular this holds for outward minimizers of $\mathcal{J}_{Q}$ in $B_2$ by \lref{inward-outward-min-implies-vs}.
\item (Non-degeneracy) There is $c(d,Q)>0$ so that if $u \in H^1(U)$ is an inward minimizer of $\mathcal{J}_{Q}$ in $U$, then
\[\sup_{B_r(x)} u \geq c r \quad\hbox{ for any } x\in \partial\{u>0\} \hbox{ and } B_r(x)\subset U.\]
\item\label{part.density-estimates} (Density estimates) For ${Q}_1\leq {Q}_2$, there is $c(d,{Q}_1,{Q}_2)>0$ so that if $u$ is an inward minimizer of $\mathcal{J}_{{Q}_1}$ and an outward minimizer of $\mathcal{J}_{{Q}_2}$ in $U$ then
\[c \leq \frac{|\Omega(u) \cap B_r(x)|}{|B_r|} \leq 1 - c \hbox{ for any } x\in \partial\Omega(u) \hbox{ with } B_r(x) \subset U. \]
\item\label{part.perimeter-esimate} (Perimeter estimate) There is $C(d)>0$ such that if $u$ is an inward minimizer of $\mathcal J_S$ in $B_2$ then 
\[\textup{Per}(\Omega(u);B_1) \leq C\sqrt{Q}(1+\|\grad u\|_{L^2(B_2)}).\]
\item (Hausdorff dimension) If $u$ satisfies the conclusions of \partref{density-estimates} and perimeter estimate \partref{perimeter-esimate} in $B_2$ then
\[\mathcal{H}^{d-1}(\partial \Omega(u) \cap B_{1}) \leq C(1+\|\grad u\|_{L^2(B_2)})\]
where $C$ depends on the constants in \partref{density-estimates} and \partref{perimeter-esimate}.
\end{enumerate}
\end{lemma}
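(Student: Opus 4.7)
All five items are by now standard consequences of the Alt--Caffarelli framework adapted to one-sided minimality; the plan is to prove them in the stated order so each builds on the previous, and to emphasize where the one-sided hypothesis is enough.

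\textbf{(i) and (ii).} For the Lipschitz bound, extend $u$ to be subharmonic in $B_2$ (zero outside $\{u>0\}$) and combine with the outward viscosity condition $|\nabla u|^2 \le Q$ at the free boundary. The key estimate is that, for $x_0 \in \Omega(u)$ with $d = \dist(x_0,\partial\Omega(u)\cap B_2) \ll 1$, the harmonic function $u$ in $B_d(x_0)$ satisfies $u(x_0)/d \le C(\sqrt Q + \|\nabla u\|_{L^2(B_2)})$ via a boundary Harnack / maximum principle argument comparing with the fundamental solution; this yields $|\nabla u(x_0)| \le C\sqrt Q$ through interior gradient estimates. Deep interior points are handled by standard interior estimates. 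For non-degeneracy, take $x_0 \in \partial\{u>0\}$ and suppose $\sup_{B_r(x_0)} u < \varepsilon r$. Set $v := u \wedge w$, where $w$ is the harmonic function in $B_r(x_0)$ matching $u$ on $\partial B_r(x_0)$ and equal to zero on a small ball; alternatively use $v := u - (u \wedge \varepsilon r \eta)$ for a suitable cutoff $\eta$. Since $v\le u$ and $\{v>0\} \subsetneq \{u>0\}$ in a nontrivial set, the inward minimality of $u$ for $\mathcal J_Q$ gives $Q\,|\{u>0\}\setminus\{v>0\}| \le \int |\nabla u|^2 - |\nabla v|^2$. The right side is $O(\varepsilon^2 r^d)$ by construction while the left side is $\gtrsim Q r^d$ unless the density of $\Omega(u)\cap B_r(x_0)$ is small; running the argument through shows $\varepsilon \ge c(d,Q)$.

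\textbf{(iii) Density estimates.} The lower density bound follows by combining (i) and (ii): by non-degeneracy there is $y\in B_{r/2}(x)$ with $u(y) \ge cr$, and by the Lipschitz estimate $u \ge cr/2$ in $B_{cr}(y)$, so this whole ball sits in $\Omega(u)\cap B_r(x)$. For the upper density, I use the outward minimality of $u$ for $\mathcal J_{Q_2}$. Let $v$ be the harmonic extension of $u$ in $B_r(x)$; then $v \ge u \ge 0$, $\{v>0\}\cap B_r = B_r$, and the outward minimality gives
\[
Q_2 \bigl|B_r \setminus \Omega(u)\bigr| \le \int_{B_r} |\nabla u|^2 - |\nabla v|^2\,dx.
\]
The right side is controlled by a Caccioppoli / Poincar\'e estimate using that $u$ vanishes at $x \in \partial\Omega(u)$: since $u=0$ on a set of positive measure in $B_r(x)$ by the lower density bound already proved, Poincar\'e with trace gives $\int_{B_r} |\nabla u-\nabla v|^2 \le Cr^{d}$ improved by the vanishing to $\le C(1-c) r^d$ times the harmonic energy, producing the desired strict upper bound $|B_r\setminus\Omega(u)| \ge c r^d$.

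\textbf{(iv) Perimeter estimate.} Use the Alt--Caffarelli coarea trick: for small $\varepsilon>0$ test inward minimality against $v_\varepsilon := (u-\varepsilon)_+ \vee 0$ extended by $u$ outside a region; more usefully, test against $v_\varepsilon(x) = u(x)\, \mathbf 1_{\{u>\varepsilon\}}$ (with harmonic interpolation across $\{u=\varepsilon\}$ if needed). Then $\{v_\varepsilon>0\}\subset \{u>0\}$, $|\{u>0\}\setminus\{v_\varepsilon>0\}| = |\{0<u\le \varepsilon\}|$, and inward minimality of $\mathcal J_Q$ combined with the Lipschitz estimate (controlling $|\nabla u|$ near $\{u=\varepsilon\}$) and an $\varepsilon \to 0$ limit via coarea yields
\[
\textup{Per}(\{u>0\};B_1) \le C\sqrt Q\bigl(1+\|\nabla u\|_{L^2(B_2)}\bigr),
\]
where the factor $\sqrt Q$ arises from balancing the $Q$-term against $|\nabla u|^2 \lesssim Q$ on the level sets.

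\textbf{(v) Hausdorff measure.} By the density bounds in (iii), $\partial \Omega(u) \cap B_1$ coincides, up to $\mathcal H^{d-1}$-null set, with the measure-theoretic boundary, and the $\mathcal H^{d-1}$ measure of $\partial\Omega(u)\cap B_1$ is controlled by $\textup{Per}(\Omega(u);B_{3/2})$ via a Vitali covering: cover $\partial\Omega\cap B_1$ by disjoint balls $B_{r_i}(x_i)$, each contributing $\ge c r_i^{d-1}$ to the perimeter by the relative isoperimetric inequality applied to $\Omega(u)\cap B_{r_i}(x_i)$ (whose density is in $[c,1-c]$). Summing gives $\mathcal H^{d-1}(\partial \Omega(u)\cap B_1) \le C\,\textup{Per}(\Omega(u);B_{3/2})$, and then (iv) closes the loop.

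\textbf{Main obstacle.} The least routine step is (iv): constructing a competitor that both respects the one-sided (inward) minimality and produces a clean coarea bookkeeping. The technical point is that $(u-\varepsilon)_+$ lowers $u$ but changes its boundary trace, so the competitor must be patched via a harmonic replacement in a thin layer near $\{u=\varepsilon\}$, and one needs the Lipschitz bound from (i) to ensure this layer has controlled measure and gradient. Once (iv) is in hand, (v) is just covering.
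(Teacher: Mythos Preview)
The paper does not actually prove this lemma: its entire proof consists of pointers to the books of Velichkov and Caffarelli--Salsa for each item (Lipschitz continuity, non-degeneracy, density, perimeter, Hausdorff measure). Your sketches are essentially the standard Alt--Caffarelli arguments one finds in those references, so in content you are aligned with what the paper is invoking.

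A few minor comments on your outline. In (iii) your description of the upper density bound is a bit garbled (the ``$\le C(1-c)r^d$ times the harmonic energy'' sentence does not quite parse), but the underlying idea---compare $u$ with its harmonic replacement in $B_r$, use outward minimality, and control the Dirichlet defect via the Lipschitz bound and the fact that $u$ vanishes on a set of positive measure---is correct. In (iv) you correctly flag the trace issue with $(u-\varepsilon)_+$; the standard fix is to localize with a cutoff so the competitor agrees with $u$ outside $B_1$, which your parenthetical ``extended by $u$ outside a region'' gestures at. None of these are genuine gaps for a proof plan at this level of detail.
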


\begin{proof}
See the books \cite{VelichkovBook,CaffarelliSalsa} for presentations of the proofs and citations for their original appearances in the literature, Lipschitz continuity of viscosity solutions is in \cite[Lemma 11.19]{CaffarelliSalsa}, non-degeneracy is in \cite[Lemma 4.4]{VelichkovBook}, density estimates are in \cite[Lemma 5.1]{VelichkovBook}, perimeter estimates are in \cite[Lemma 5.6]{VelichkovBook}, Hausdorff dimension estimates in \cite[Lemma 5.9]{VelichkovBook}.
\end{proof}

\begin{remark}
The $C^{1,\beta}$ regularity of $\mathcal{E}(u,\cdot)$ minimizers is known due to the very recent result of Ferreri and Velichkov \cite{FerreriVelichkov} when $\{u>0\}$ is $C^{1,\alpha}$.
\end{remark}

\subsection{Bounded variation regularity in $t$}\label{s.energy-soln-temporal-reg}

BV regularity in time is typical for solutions of (global) dissipative evolution problems \cite{mielke2015book}.  Following the standard argument, we establish the $\textup{BV}$ in time regularity using a Gr\"onwall type argument with the energy dissipation inequality.

\begin{lemma}\label{l.energy-soln-time-reg}
Suppose $u$ is an energy solution \EE{} on $[0,T]$. Then
\[{\bf 1}_{\Omega(u(t))} \in \textup{BV}([0,T]; L^1(\R^d))\]
and
\[\mathcal{J}(u(t)), \ P(u(t))  \in \textup{BV}([0,T];\R),\]
where recall the pressure is $P(u(t))   = \int_{\partial \Omega(t)} |\nabla u| dS= \int_{\partial U } \frac{\partial u}{\partial n} dS = F(t)^{-1}\int |\nabla u(t)|^2 \ dx$.
\end{lemma}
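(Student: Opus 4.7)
\emph{Plan.} The three BV statements are tied together. First, a priori bounds on $\mathcal J(u(t))$ and $P(t)$ come from applying Gr\"onwall to the energy dissipation inequality; this then makes the $L^1$-BV regularity of $\mathbf{1}_{\Omega(u(t))}$ an immediate consequence of summing the same inequality over a partition; finally, BV of $\mathcal J$ and $P$ follow by using scalar multiples of $u(s)$ as competitors in the global stability condition at time $t$. For the a priori step, the global stability forces $u(t)$ to be harmonic on $\Omega(u(t))\cap U$ and to vanish on $\partial\Omega(u(t)) \cap U$, so integration by parts against the boundary condition $u(t)=F(t)$ on $\partial U$ gives $\int_U |\nabla u(t)|^2 \,dx = F(t) P(t)$ and hence $P(t) \leq F(t)^{-1} \mathcal J(u(t))$. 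Applying \eref{diss-inequality} with $t_0 = 0$ and dropping the dissipation term gives
\[ \mathcal J(u(t)) \leq \mathcal J(u(0)) + \int_0^t 2 |\dot F(\tau)| F(\tau)^{-1} \mathcal J(u(\tau))\, d\tau, \]
and under the standing hypothesis that $F \in W^{1,1}([0,T])$ is bounded below away from zero, Gr\"onwall yields $\sup_{[0,T]} \mathcal J(u(\cdot)) \leq M$ and hence $\sup_{[0,T]} P(\cdot) \leq M / \inf F$.

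\emph{BV of the positive phase.} For any partition $0 = t_0 < \cdots < t_N = T$, summing \eref{diss-inequality} telescopes to
\[ \sum_{i=1}^N \Diss(u(t_{i-1}), u(t_i)) \leq \mathcal J(u(0)) - \mathcal J(u(T)) + \int_0^T 2 \dot F(\tau) P(\tau) \, d\tau \leq C \]
by the a priori step. Since $\Diss(u(s), u(t)) \geq (\mu_+ \wedge \mu_-) \|\mathbf{1}_{\Omega(u(s))} - \mathbf{1}_{\Omega(u(t))}\|_{L^1(\R^d)}$, this is exactly the required BV bound, and in particular $|\Omega(u(\cdot))| \in \textup{BV}([0,T]; \R)$.

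\emph{BV of $\mathcal J$ and $P$.} Given $s, t \in [0,T]$, the function $v = (F(t)/F(s)) u(s)$ is admissible for the stability inequality at time $t$ and has $\Omega(v) = \Omega(u(s))$, so only the Dirichlet part of $\mathcal J$ scales and
\[ \mathcal J(u(t)) - \mathcal J(u(s)) \leq \bigl[(F(t)/F(s))^2 - 1\bigr] F(s) P(s) + \Diss(u(t), u(s)). \]
The symmetric choice $(F(s)/F(t)) u(t)$ at time $s$ gives the reverse one-sided bound, and together with the $L^\infty$ bounds on $F$ and $P$ these combine to
\[ |\mathcal J(u(t)) - \mathcal J(u(s))| \leq C |F(t) - F(s)| + C \|\mathbf{1}_{\Omega(u(t))} - \mathbf{1}_{\Omega(u(s))}\|_{L^1}. \]
Summing over a partition and using the preceding step together with BV of $F$ gives $\mathcal J(u(\cdot)) \in \textup{BV}([0,T]; \R)$. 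Finally, $P(t) = F(t)^{-1}(\mathcal J(u(t)) - |\Omega(u(t))|)$ is BV as a product/quotient of BV functions (with $F$ bounded below).

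The main obstacle is the rescaled-competitor argument. Multiplying $u(s)$ by the scalar $F(t)/F(s)$ is the canonical way to adjust the Dirichlet datum without disturbing the positive set, and it works precisely because the volume portion of $\mathcal J$ is invariant under positive rescaling of $u$. This causes the $|\Omega|$-contributions to cancel and isolates a clean dependence on $|F(t) - F(s)|$; the rest is Gr\"onwall plus telescoping.
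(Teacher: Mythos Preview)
Your proof is correct and follows essentially the same approach as the paper: Gr\"onwall on the dissipation inequality for a priori bounds, then summing the dissipation inequality over partitions for $BV$ of $\mathbf{1}_{\Omega}$, then the rescaled competitor $(F(t)/F(s))u(s)$ in the stability condition to control energy increments. The only minor variation is that you use stability symmetrically at both times $s$ and $t$ to bound $|\mathcal{J}(u(t))-\mathcal{J}(u(s))|$, whereas the paper gets one direction from the dissipation inequality (via \eref{energy-ineq-BV}) and the other from stability; your symmetric version is slightly cleaner but the argument is otherwise the same.
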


Before beginning the proofs we make a remark about the definition of energy solution.
\begin{remark}\label{r.energy-dissipation-equality}
Energy solutions will also satisfy an \emph{energy dissipation balance} condition.  Define the total variation of the dissipation distance along a path $\Lambda(t)$ of finite measure subsets of $\R^d$
\begin{equation}\label{e.dissbar-def}
\Dissbar(\Lambda(\cdot);[s,t]) := \sup \left\{ \sum_{k=1}^N \textup{Diss}(\Lambda({t_{k-1}}),\Lambda({t_{k}})): (t_k)_{k=0}^N \hbox{ partitions $[s,t]$}\right\}.
\end{equation}
Then let $u(t)$ be an energy solution on $[0,T]$, i.e. satisfying the forcing, stability, and energy dissipation inequality conditions of \EE{}. One can show immediately, by applying \eref{diss-inequality} on each subinterval of an arbitrary partition of $[t_0,t_1]$, that 
\begin{equation}\label{e.dissbar-inequality}
 \mathcal{J}(u({t_0}))-\mathcal{J}(u({t_1}))  + \int_{t_0}^{t_1}2 \dot{F}(t)P(t) \ dt \geq \Dissbar(u(\cdot);[t_0,t_1]).
\end{equation}
With more work (see \lref{energy-diss-eq} later) combining with the stability property one can also show the identity
\begin{equation}\label{e.energy-diss-eq}
\mathcal{J}(u({t_0}))-\mathcal{J}(u({t_1}))  + \int_{t_0}^{t_1}2 \dot{F}(t)P(t) \ dt = \Dissbar(u(\cdot);[t_0,t_1]).
\end{equation}
In some works, for example \cite{alberti2011}, the energy dissipation balance \eref{energy-diss-eq} is used in place of the energy dissipation inequality \eref{diss-inequality} as part of the definition of energy solution.  This difference of definition is just a matter of preference at least in this problem.
\end{remark}

\begin{remark}
It is quite natural, and important for applications, to consider general $F(t,x)$ and allow for $F$ to be only $\textup{BV}$ regular in time.  However, this generality does add serious complications which would significantly lengthen the presentation, and it is not so relevant to the goals of the present work.  
\end{remark}

\begin{proof}[Proof of \lref{energy-soln-time-reg}]
By the energy dissipation inequality \eref{dissbar-inequality}
\begin{equation}\label{above_0}
 \mathcal{J}(u(0))-\mathcal{J}(u(t))  + \int_{0}^{t}2 \dot{F}(s)P(u(s)) \ ds \geq  \overline{\textup{Diss}}(\Omega(u(\cdot));[0,t]) 
\geq \mu_+ \wedge \mu_- [{\bf 1}_{\Omega(\cdot)}]_{\textup{BV}([0,t];L^1)}
 \end{equation}
In the remainder of the proof we will denote the Dirichlet energy
\[\mathcal{D}(t) := \int_{\Omega(u(t))} |\nabla u(t,x)|^2 \ dx = F(t)P(u(t)).\]
Note that $\mathcal{D}(t) \leq \mathcal{J}(u(t))$ so in particular
\begin{align*}
\mathcal{D}(t) &\leq \mathcal{J}(u(0))+\int_{0}^{t}2 \dot{F}(s)P(u(s)) \ ds \\
&= \mathcal{J}(u(0))+\int_{0}^{t}2 \frac{d}{ds}(\log F)(s) \mathcal{D}(s) \ ds
\end{align*}
for all $0 \leq t \leq T$ so by Gr\"onwall
\[\mathcal{D}(t) \leq \mathcal{J}(u(0)) \frac{F(t)^2}{F(0)^2}.\]
Thus $P(u(t)) \leq F(0)^{-2} F(t)$, and from \eqref{above_0}  we have 
\begin{align*}
\mathcal{J}(u(t)) + \mu_+ \wedge \mu_- [{\bf 1}_{\Omega(\cdot)}]_{\textup{BV}([0,t];L^1)} &\leq \mathcal{J}(u(0))\left(1+\frac{1}{F(0)^2}\int_0^t2\dot{F}(s)F(s) \ ds\right) \\
&= \mathcal{J}(u(0))\left(1+\frac{F(t)^2-F(0)^2}{F(0)^2}\right)\\
&=\mathcal{J}(u(0))\frac{F(t)^2}{F(0)^2}.
\end{align*}
To summarize,
\begin{equation}\label{e.energy-ineq-BV}
\mathcal{J}(u(t)) + \mu_+ \wedge \mu_- [{\bf 1}_{\Omega(\cdot)}]_{\textup{BV}([0,t];L^1)}\leq \mathcal{J}(u(0))\frac{F(t)^2}{F(0)^2}.
\end{equation}
In particular
\begin{equation}\label{bv_1}
[{\bf 1}_{\Omega(\cdot)}]_{\textup{BV}([0,t];L^1)} \leq \frac{1}{\mu_+\wedge \mu_-} \mathcal{J}(u(0))\frac{F(t)^2}{F(0)^2}.
\end{equation}
This gives the BV in time estimate of ${\bf 1}_{\Omega(t)}$.

Now we turn to the BV estimate of the Dirichlet energy.  Focusing on the other term on the left hand side in \eref{energy-ineq-BV}, and using the fact that $\mathcal{J}(u(t)) = \mathcal{D}(t) + |\Omega(u(t))|$,
\[\mathcal{D}(t) \leq \frac{F(t)^2}{F(0)^2}\mathcal{D}(0)+\frac{F(t)^2}{F(0)^2}|\Omega(u(0))| - |\Omega(u(t))|\]
or rearranging this
\begin{align}\mathcal{D}(t) - \mathcal{D}(0) &\leq \frac{\mathcal{D}(0)}{F(0)^{2}}(F(t)^2 - F(0)^2)+\frac{1}{F(0)^{2}}(F(t)^2-F(0)^2)|\Omega(u(0))|\notag\\
&\quad  +|\Omega(u(t)) \Delta \Omega(u(0))|\notag\\
& = \frac{\mathcal{J}(u(0))}{F(0)^{2}}(F(t)^2 - F(0)^2)+|\Omega(u(t)) \Delta \Omega(u(0))|.\label{e.energy-ineq-D-upper}
\end{align}
Global stability implies a bound in the other direction
\begin{align*}
 \mathcal{J}(u(0)) &\leq J\left(\frac{F(0)}{F(t)}u(t)\right) + \mu_+ \vee \mu_- |\Omega(u(0)) \Delta \Omega(u(t))|,
 \end{align*}
 which expands out as
 \[\mathcal{D}(0)+|\Omega(u(0))| \leq \frac{F(0)^2}{F(t)^2}\mathcal{D}(t)+|\Omega(u(t))| + \mu_+ \vee \mu_- |\Omega(u(0)) \Delta \Omega(u(t))|,\]
 which we can use to find
\begin{equation}\label{e.energy-ineq-D-lower}
\mathcal{D}(0) - \mathcal{D}(t)  \leq \frac{\mathcal{D}(t)}{F(t)^{2}}(F(0)^2 - F(t)^2)+(1+ \mu_+ \vee \mu_-) |\Omega(u(0)) \Delta \Omega(u(t))|.
 \end{equation}
 
 Since $0$ and $t$ can be replaced by arbitrary $t_0 < t_1 \in [0,T]$, for any partition of $[0,T]$ we can apply \eref{energy-ineq-D-lower} and \eref{energy-ineq-D-upper} together to find:
\begin{align*}
\sum_j |\mathcal{D}(t_{j+1}) - \mathcal{D}(t_j)| &\leq \sum_j\bigg[\max\left\{\frac{\mathcal{J}(u(t_j))}{F(t_j)^2},\frac{\mathcal{J}(u(t_{j+1}))}{F(t_{j+1})^2}\right\}\left|F(t_{j+1})^2 - F(t_{j})^2\right|\\
&\quad \quad  + (1+\mu_+ \vee \mu_-) |\Omega(u(t_{j+1})) \Delta \Omega(u(t_j))|.
\end{align*}
Applying  \eref{energy-ineq-BV} again for $F(t)^{-2}\mathcal{J}(u(t)) \leq F(0)^{-2}\mathcal{J}(u(0))$, we obtain that 
$$
\sum_j |\mathcal{D}(t_{j+1}) - \mathcal{D}(t_j)| \leq \frac{\mathcal{J}(u(0))}{F(0)^2}[F^2]_{\textup{BV}([0,T])}+(1+\mu_+ \vee \mu_-)[{\bf 1}_{\Omega(\cdot)}]_{\textup{BV}([0,T];L^1)}.
$$

Thus we conclude that $\mathcal{D}(t) \in BV([0,T])$ with the estimate, applying \eqref{bv_1},
\[[\mathcal{D}]_{BV([0,T])} \leq \left[[F^2]_{\textup{BV}([0,T])}\\
+\frac{1+\mu_+ \vee \mu_-}{\mu_+\wedge \mu_-} F(t)^2\right]\frac{\mathcal{J}(u(0))}{F(0)^2}.\] Finally since $\mathcal{J}(u(t)) = \mathcal{D}(t) + |\Omega(u(t))|$ and both terms on the right are in $BV([0,T])$ then so is $\mathcal{J}(u(t))$.  Similarly $P(u(t)) = F(t)^{-1}\mathcal{D}(t)$ is in $BV([0,T])$ since $F(t)$ is bounded from below.
\end{proof}

From these regularity properties we can establish that the stability and energy dissipation inequality properties of energy solutions force the \emph{energy dissipation equality}. 
\begin{lemma}[Energy dissipation equality]\label{l.energy-diss-eq}
If $u$ is an energy solution on $[0,T]$ then for all $s\leq t$ in $[0,T]$
\[ \mathcal{J}(u(s))-\mathcal{J}(u(t))  + \int_{s}^{t} 2 \dot{F}(\tau)P(u(\tau)) \ d\tau = \overline{\textup{Diss}}(\Omega(u(\cdot));[s,t]).\]
\end{lemma}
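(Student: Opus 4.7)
One direction is essentially free: \rref{energy-dissipation-equality} already notes that applying \eref{diss-inequality} on each subinterval of a partition of $[s,t]$, telescoping, and taking the supremum in \eref{dissbar-def} yields
\[\mathcal{J}(u(s)) - \mathcal{J}(u(t)) + \int_s^t 2\dot F(\tau) P(u(\tau)) \, d\tau \;\geq\; \Dissbar(\Omega(u(\cdot));[s,t]).\]
The plan is to prove the reverse inequality using global stability with a family of test functions obtained by temporal rescaling.

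Given a partition $s = t_0 < t_1 < \cdots < t_N = t$, for each $k$ I would test the stability of $u(t_{k-1})$ against the competitor $u'_k := (F(t_{k-1})/F(t_k))\, u(t_k)$. This rescaling is engineered so that $u'_k = F(t_{k-1})$ on $\partial U$ (as required by admissibility) while $\Omega(u'_k) = \Omega(u(t_k))$, whence $\Diss(u(t_{k-1}), u'_k) = \Diss(\Omega(u(t_{k-1})), \Omega(u(t_k)))$. Writing $\mathcal{D}(\tau) := \int_U |\nabla u(\tau)|^2 \, dx = F(\tau) P(u(\tau))$, so that $\mathcal{J}(u(\tau)) = \mathcal{D}(\tau) + |\Omega(u(\tau))|$, the stability inequality $\mathcal{J}(u(t_{k-1})) \leq \mathcal{J}(u'_k) + \Diss(u(t_{k-1}), u'_k)$ rearranges as
\[\Diss(\Omega(u(t_{k-1})), \Omega(u(t_k))) \;\geq\; \mathcal{J}(u(t_{k-1})) - \mathcal{J}(u(t_k)) + \frac{F(t_k)^2 - F(t_{k-1})^2}{F(t_k)^2}\, \mathcal{D}(t_k).\]

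Summing over $k$, telescoping the $\mathcal{J}$-differences, and bounding the left side by $\Dissbar$ yields for every partition
\[\Dissbar(\Omega(u(\cdot));[s,t]) \;\geq\; \mathcal{J}(u(s)) - \mathcal{J}(u(t)) + \sum_{k=1}^N \frac{\mathcal{D}(t_k)}{F(t_k)^2}\bigl[F(t_k)^2 - F(t_{k-1})^2\bigr].\]
Along a sequence of partitions with mesh tending to zero, the Riemann--Stieltjes sum on the right converges to $\int_s^t \mathcal{D}(\tau) F(\tau)^{-2}\, d(F^2)(\tau) = \int_s^t 2\dot F(\tau) P(u(\tau)) \, d\tau$, yielding the reverse inequality and hence the claimed equality.

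The principal (mild) difficulty is ensuring convergence of the Riemann--Stieltjes sum: by \lref{energy-soln-time-reg} $\mathcal{D}$ lies in $BV([0,T])$ and is bounded, hence Riemann integrable, but may have countably many jumps. This is handled routinely by restricting partition points to the co-countable set of continuity points of $\mathcal{D}$, so that upper and lower Darboux sums agree in the limit. The key conceptual step is the temporal-rescaling choice of test function, which converts the a priori spatial stability inequality at each fixed time into the temporal energy balance in the limit.
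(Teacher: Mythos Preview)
Your proposal is correct and follows essentially the same route as the paper: both test global stability of $u(t_{k-1})$ against the rescaled competitor $\tfrac{F(t_{k-1})}{F(t_k)}u(t_k)$, obtain the same one-step inequality, telescope, and pass to the limit using the BV regularity of $P$ (equivalently $\mathcal{D}$) from \lref{energy-soln-time-reg}. The only cosmetic difference is that the paper estimates the Riemann-sum error by hand via the bound $F(t)/F(t_{j+1})\le 1+\ep$ together with $[P]_{BV}$, whereas you package this as convergence of a Riemann--Stieltjes sum for a BV (hence a.e.\ continuous, hence Riemann integrable) integrand against the $C^1$ integrator $F^2$.
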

\begin{proof}
We need to establish the upper bound to complement \eref{diss-inequality}. This will follow from the time regularity and the global stability property.  We may take $s = 0$ and $t = T$.
Let $\ep>0$. By the continuity of $\log F(t)$, we can choose a (finite) partition $0 = t_0< t_1 < \cdots < t_N = T$ of $[0,T]$ such that 
 \begin{equation}\label{e.partition-choice-cond}
\sup_j(t_{j+1}-t_j) \leq \ep \ \hbox{ and } \ \sup_{t \in [t_j,t_{j+1}]} \frac{F(t)}{F(t_{j+1})} \leq 1 + \ep.
 \end{equation}
 We write the energy dissipation using a telescoping sum
\begin{equation}\label{e.energy-telescope}
  \mathcal{J}(u(0))-\mathcal{J}(u(T)) = \sum_{j=0}^{N-1} [\mathcal{J}(u(t_{j}))-\mathcal{J}(u(t_{j+1}))].
  \end{equation}
  Applying the global stability condition for $u(t_j)$, 
  \begin{align*}
  \mathcal{J}(u(t_{j}))-\mathcal{J}(u(t_{j+1})) &\leq \mathcal{J}\left(\tfrac{F(t_{j})}{F(t_{j+1})}u(t_{j+1})\right)+\textup{Diss}(\Omega(t_j),\Omega(t_{j+1}))-\mathcal{J}(u(t_{j+1})) \\
  &= \textup{Diss}(\Omega(t_j),\Omega(t_{j+1}))+  \underbrace{(\tfrac{F(t_j)^2}{F(t_{j+1})^2} - 1)\mathcal{D}(u(t_{j+1}))}_{\text{:=A}}
  \end{align*}
Now from the fact that $\mathcal{D}(u(t)) = F(t) P(u(t))$, 
\[
A= \frac{F(t_j)^2 - F(t_{j+1})^2}{F(t_{j+1})}P(u(t_{j+1}))=\int_{t_j}^{t_{j+1}} 2 \frac{F(t)}{F(t_{j+1})} \dot{F}(t) P(u(t_{j+1})) \ dt.
\]
From \eref{partition-choice-cond} we have
\begin{align*}
A &=\int_{t_j}^{t_{j+1}} 2 \frac{F(t)}{F(t_{j+1})} \dot{F}(t) P(u(t_{j+1})) \ dt \notag\\
&\quad \quad \leq \int_{t_j}^{t_{j+1}} 2 \frac{F(t)}{F(t_{j+1})} \dot{F}(t) P(u(t)) \ dt + 2(1+\ep)\|\dot{F}\|_\infty [P(u(\cdot))]_{\textup{BV}((t_j,t_{j+1}])}(t_{j+1}-t_j)\notag\\
&\quad \quad \leq (1+\ep)\int_{t_j}^{t_{j+1}} 2 \dot{F}(t) P(u(t)) \ dt + 2(1+\ep)\ep\|\dot{F}\|_\infty [P(u(\cdot))]_{\textup{BV}((t_j,t_{j+1}])}
\end{align*}
Combining the previous estimates into \eref{energy-telescope} we find
\begin{align*}
\mathcal{J}(u(0))-\mathcal{J}(u(T)) &\leq(1+\ep)\int_{0}^{T} 2 \dot{F}(t) P(u(t)) \ dt +\sum_{j=0}^{N-1}\textup{Diss}(\Omega(t_{j}),\Omega(t_{j+1})) \\
&\quad \quad + \sum_{j=0}^{N-1}2(1+\ep)\ep\|\dot{F}\|_\infty [P(u(\cdot))]_{\textup{BV}((t_j,t_{j+1}])} \\
&\leq \overline{\textup{Diss}}(\Omega(u(\cdot));(0,T))+(1+\ep)\int_{0}^{T} 2 \dot{F}(t) P(u(t)) \ dt      \\
&\quad \quad + 2(1+\ep)\ep \|\dot{F}\|_\infty [P(u(\cdot))]_{\textup{BV}((0,T])}.
\end{align*}
  Since $\ep>0$ was arbitrary we get the result.
  \end{proof}

\subsection{Left and right limits and monotonicity of jumps}\label{s.energy-soln-lr-limits}
The $\textup{BV}$ time regularity of ${\bf 1}_{\Omega(u(t))}$ allows us to establish further temporal left and right continuity properties of the evolution at every time.  Recall the dissipation augmented energy functional \eref{E-def-intro},
\begin{equation*}
\mathcal{E}(\Lambda,u) := \mathcal{J}(u) + \textup{Diss}(\Lambda,\Omega(u)).
\end{equation*}
First we discuss a monotonicity property for minimizers of $\mathcal{E}$.

\subsubsection{No crossing property of $\mathcal{E}$ minimizers}

The next lemma shows that minimizers of the dissipation distance augmented energy $\mathcal{E}$, defined in \eref{E-def-intro}, satisfy a certain ordering property.  

Suppose that $u_1$ and $u_2$ are arbitrary $H^1(U)$ functions. Note that
 \[ \mathcal{J}(u_1 \vee u_2) + \mathcal{J}(u_1\wedge u_2) =  \mathcal{J}(u_1) + \mathcal{J}(u_2)\]
 and
 \[ \textup{Diss}(\Lambda,\Omega(u_1 \vee u_2)) + \textup{Diss}(\Lambda,\Omega(u_1 \wedge u_2)) = \textup{Diss}(\Lambda,\Omega(u_1))+ \textup{Diss}(\Lambda,\Omega(u_2)).\]
  Combining the previous two equations yields
  \begin{equation}\label{e.E-additivity-property}
   \mathcal{E}(\Lambda,u_1 \vee u_2) +  \mathcal{E}(\Lambda,u_1 \wedge u_2) = \mathcal{E}(\Lambda,u_1 ) +  \mathcal{E}(\Lambda, u_2).
   \end{equation}
  It is standard to derive from this additivity property that the pointwise minimum and maximum of two minimizers with the same Dirichlet condition are energy minimizers as well.  Adding in the strong maximum principle for harmonic functions we can show an ordering property of the collection of energy minimizers, again this is a sufficiently standard idea that we do not have a particular original reference.

\begin{lemma}[No crossing]\label{l.no-crossing}
Let $\Lambda$ be an open set containing $\R^d \setminus U$.  
If $u_1$ and $u_2$ both minimize $\mathcal{E}(\Lambda,\cdot)$ with respect to $H^1_0(U)$ perturbations and $u_1 - u_2 \in H^1_0(U)$. Then $u_1 \wedge u_2$ and $u_1 \vee u_2$ are also minimizers and $u_1$ and $u_2$ are ordered in each connected component of $\Omega(u_1) \cup \Omega(u_2) = \Omega(u_1 \vee u_2)$.

\end{lemma}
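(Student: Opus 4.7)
The plan is two-stage: first I would upgrade $u_1\vee u_2$ and $u_1\wedge u_2$ to minimizers using the additivity \eqref{e.E-additivity-property}; then I would apply the strong maximum principle to the difference of two minimizers to extract the ordering on each connected component of $\Omega(u_1)\cup\Omega(u_2)$.

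For the first stage, I would note that $H^1_0(U)$ is closed under the positive-part operation, so the admissibility hypothesis $u_1-u_2 \in H^1_0(U)$ yields $u_1\vee u_2-u_1 = (u_2-u_1)_+$ and $u_1-u_1\wedge u_2 = (u_1-u_2)_+$ in $H^1_0(U)$. Thus $u_1\vee u_2$ and $u_1\wedge u_2$ are both admissible competitors for each of $u_1$ and $u_2$. Using minimality and summing the four resulting inequalities gives
\[
\mathcal{E}(\Lambda,u_1)+\mathcal{E}(\Lambda,u_2) \;\leq\; \mathcal{E}(\Lambda,u_1\vee u_2)+\mathcal{E}(\Lambda,u_1\wedge u_2),
\]
but the two sides are equal by \eqref{e.E-additivity-property}, so each individual inequality must be an equality. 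In particular $u_1\vee u_2$ and $u_1\wedge u_2$ are minimizers, and are therefore harmonic in their positive sets.

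For the no-crossing, I would fix a connected component $V$ of $\Omega(u_1\vee u_2)=\Omega(u_1)\cup\Omega(u_2)$. Since $u_1\vee u_2$ is now a minimizer, it is harmonic throughout $V$. On any connected component $W$ of the open set $V\cap\Omega(u_1)$, the function $\phi:=u_1\vee u_2 - u_1 \geq 0$ is a difference of two harmonic functions, hence itself harmonic, and the strong maximum principle applied to $-\phi$ yields the dichotomy: either $\phi\equiv 0$ on $W$ (so $u_1\geq u_2$ on $W$) or $\phi>0$ strictly on $W$ (so $u_2>u_1$ on $W$). To lift the ordering to all of $V$: if every such $W$ falls into the strict alternative, then $u_2>u_1$ on $V\cap\Omega(u_1)$, and on $V\setminus\Omega(u_1)\subset\Omega(u_2)$ we have $u_1=0<u_2$, so $u_2>u_1$ everywhere on $V$. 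Otherwise, some $W$ satisfies $u_1\geq u_2$, in which case I would claim $W=V$: if not, the connectedness of $V$ would supply a boundary point $x^*\in\partial W\cap V$ with $u_1(x^*)=0$ but $u_2(x^*)>0$ (because $V\subset\Omega(u_1)\cup\Omega(u_2)$), and continuity of $u_1,u_2$ would then force $u_2>u_1$ in a neighborhood of $x^*$ intersected with $W$, contradicting $u_1\geq u_2$ on $W$; thus $W=V$ and $u_1\geq u_2$ on $V$.

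The main point requiring care is the final topological step extending the ordering from components of $V\cap\Omega(u_1)$ to all of $V$; otherwise the argument is routine bookkeeping with the energy additivity and the strong maximum principle, so I do not anticipate any serious obstacle.
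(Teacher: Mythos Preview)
Your proof is correct and follows essentially the same approach as the paper: both arguments use the additivity identity \eqref{e.E-additivity-property} together with minimality to deduce that $u_1\vee u_2$ and $u_1\wedge u_2$ are minimizers (hence harmonic in their positivity sets), and then appeal to the strong maximum principle/unique continuation for harmonic functions to obtain the ordering on each connected component. The paper compresses the second stage into the single phrase ``by unique continuation for harmonic functions,'' whereas you spell out carefully the topological step that propagates the local ordering on a component $W$ of $V\cap\Omega(u_1)$ to all of $V$; this is a welcome elaboration, and your argument there is sound.
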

\begin{remark}\label{r.multiple-components}
To be clear the ordering between $u_1$ and $u_2$ may differ in the different connected components of $\Omega(u_1) \cup \Omega(u_2)$.   For example, given two ordered minimizers the same scenario can be repeated far away with the reverse ordering to create unordered minimizers.
\end{remark}

\begin{proof}[Proof of \lref{no-crossing}]Recalling \eref{E-additivity-property} we have
  \[ \mathcal{E}(\Lambda,u_1 \vee u_2) +  \mathcal{E}(\Lambda,u_1 \wedge u_2) = \mathcal{E}(\Lambda,u_1 ) +  \mathcal{E}(\Lambda, u_2).\]
 On the other hand, since $u_1 = u_2$ on $\partial U$ in the trace sense, also $u_1 \wedge u_2, u_1 \vee u_2  = u_1 = u_2$ on $\partial U$ in the trace sense.  So, from the minimizer assumption on $u_1$ and $u_2$, 
  we conclude that all four terms in the above equality must be equal.  Thus $u_1$, $u_2$, $u_1 \wedge u_2$, and $u_1 \vee u_2$ are all minimizers of $\mathcal{E}(\Lambda,\cdot)$ over the admissible class.  In particular each one is harmonic in its positivity set.  By unique continuation for harmonic functions this means that $u_1$ and $u_2$ are ordered in each connected component of $U \cap \{u_1 \vee u_2 > 0\}$.  Note that by harmonicity again if $u_1 \neq u_2$ in a given connected component of $U \cap \{ u_1 \vee  u_2 >0\}$ then the ordering is strict in it. 
\end{proof}

\subsubsection{Abstract left-right limits} Next we discuss some general facts about bounded variation maps on intervals of the line $\R$.  Suppose $(Y, d_Y)$ is a complete metric space.
\begin{definition} 
Say $f : [a,b] \to Y$ is a bounded variation map if
\[[f]_{\textup{BV}([a,b];Y)} := \sup_{a =t_0 \leq t_1 \leq\cdots \leq t_{N+1} = b} \sum_{j=0}^N d(f(t_{j+1}),f(t_j)) < +\infty.\]
On open or half open intervals define
\[[f]_{\textup{BV}([a,b);Y)}:= \lim_{b' \nearrow b}[f]_{\textup{BV}([a,b'];Y)},\]
or similar for $(a,b]$ and $(a,b)$.  The limit exists by monotonicity.
\end{definition}
\begin{definition}
Let $f: [a,b] \to Y$.  For each $t \in [a,b]$, if the limits exist define the left limit $f_\ell(t)$ at $t \in (a,b]$
\[f_\ell(t) := \lim_{s \to t -} f(s)\]
and the right limit $f_r(t)$ at $t \in [a,b)$
\[ f_r(t):= \lim_{s \to t +} f(s).\]
\end{definition}

\begin{lemma}\label{l.BV-abstract-lr}
Let $(Y,d)$ be a complete metric space and $f$ be a bounded variation map $[a,b] \to Y$. Then $f$ has left (resp. right) limit at each $t \in (a,b]$ (resp. $[a,b)$) and $f$ has at most countably many jump discontinuities.
\end{lemma}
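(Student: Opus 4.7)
The plan is to exploit the nondecreasing total variation function $V : [a,b] \to [0,\infty)$ defined by $V(s) := [f]_{\textup{BV}([a,s];Y)}$, which is bounded by hypothesis. For any $s \leq s'$ in $[a,b]$, inserting $\{s, s'\}$ into a partition nearly realizing $V(s')$ gives the fundamental estimate
\begin{equation*}
d_Y(f(s), f(s')) \leq V(s') - V(s).
\end{equation*}
Since $V$ is monotone and bounded, it has left and right limits at every point in $[a,b]$, so the right-hand side tends to zero as $s, s' \to t^-$ (or $s, s' \to t^+$).

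First I would establish existence of $f_\ell(t)$ for $t \in (a,b]$. Given any sequence $s_n \nearrow t$, the estimate above shows $(f(s_n))$ is Cauchy in $Y$, hence converges by completeness. Independence from the sequence follows by interleaving two such sequences and applying the same estimate. The argument for $f_r(t)$ at $t \in [a,b)$ is symmetric.

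Next, for countability of jump discontinuities, I would call $t$ a jump point if $f_\ell(t) \neq f(t)$ or $f(t) \neq f_r(t)$ (with the obvious one-sided conventions at the endpoints $a$ and $b$), and set
\begin{equation*}
\sigma(t) := d_Y(f_\ell(t), f(t)) + d_Y(f(t), f_r(t)),
\end{equation*}
so that $t$ is a discontinuity iff $\sigma(t) > 0$. For fixed $\varepsilon > 0$, I claim the set $\{t : \sigma(t) \geq \varepsilon\}$ is finite: given distinct points $t_1 < \cdots < t_k$ in this set, pick points $s_i^\pm$ with $s_i^- < t_i < s_i^+$ so close to $t_i$ that $d_Y(f(s_i^-), f(t_i)) + d_Y(f(t_i), f(s_i^+)) \geq \sigma(t_i)/2 \geq \varepsilon/2$ and so that the intervals $[s_i^-, s_i^+]$ are pairwise disjoint. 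Then the partition containing all the $s_i^\pm$ and the $t_i$ shows $[f]_{\textup{BV}([a,b];Y)} \geq k\varepsilon/2$, whence $k \leq 2[f]_{\textup{BV}([a,b];Y)}/\varepsilon$. Taking the union over $\varepsilon = 1/n$ then exhibits the discontinuity set as a countable union of finite sets.

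I do not foresee a serious obstacle: the argument is a standard application of the BV estimate for the triangle-inequality-like bound $d_Y(f(s), f(s')) \leq V(s') - V(s)$. The only mild subtlety is that one must separately account for ``removable'' discontinuities where $f_\ell(t) = f_r(t) \neq f(t)$, which is the reason to use $\sigma(t)$ rather than $d_Y(f_\ell(t), f_r(t))$ as the jump measure.
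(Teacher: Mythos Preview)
Your proposal is correct and follows essentially the same standard argument as the paper's sketch: both deduce the Cauchy property from the finite total variation and then bound the number of jumps by the total variation. Your version is organized slightly differently, introducing the monotone function $V(s) = [f]_{\textup{BV}([a,s];Y)}$ to obtain the clean estimate $d_Y(f(s),f(s')) \leq V(s')-V(s)$, whereas the paper argues directly that $\sum_j d_Y(f(s_j),f(s_{j+1})) \leq [f]_{\textup{BV}}$ for any increasing sequence; for countability the paper sums $d_Y(f_\ell(t),f_r(t))$ over all $t$ while you instead bound the cardinality of $\{\sigma(t)\geq \varepsilon\}$. Your use of $\sigma(t)=d_Y(f_\ell(t),f(t))+d_Y(f(t),f_r(t))$ is a mild refinement, since it also catches removable-type discontinuities where $f_\ell(t)=f_r(t)\neq f(t)$, a point the paper's sketch does not address explicitly.
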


The proof is standard, but we give a sketch for convenience. 

\begin{proof}

For the directional limits note that for any sequence $s_j \nearrow t \in (a,b]$
\[\sum_j d_Y(f(s_j),f(s_{j+1})) \leq [f]_{\textup{BV}([a,b];Y)} < + \infty\]
and so $f(s_j)$ is Cauchy in $Y$.  The limits must agree on different approaching sequences by a typical interlacement argument with pairs of sequences. A symmetric argument produces the right limits.  

Now, first arguing for any finite set $\mathcal{T}$ of times, and then taking the supremum over all finite sets
\[\sum_{t \in [a,b]}d_Y(f_\ell(t),f_r(t)) \leq [f]_{\textup{BV}([a,b];Y)}.\]
In particular $f_\ell(t) = f_r(t)$ except for at most countably many times.

\end{proof}
 \subsubsection{Characterization of the upper and lower semicontinuous envelopes for energy solutions.} \lref{BV-abstract-lr} and \lref{energy-soln-time-reg} together yield the existence of left and right limits of $\Omega(u(t))$, $\mathcal{J}(u(t))$, and $P(u(t))$ at all times.  Our next result says  that  $u(t)$ also has left and right limits in the uniform metric, even though we do not necessarily establish that $t \to u(t)$ is a $\textup{BV}$ in time map into $C_c(\R^d)$. The left and right limits of all the previous quantities are consistent, i.e. $(P\circ u)_\ell(t) = P(u_\ell(t))$ etc. Furthermore the left and right limits of $u(t)$ satisfy certain global minimality properties for the energy plus dissipation distance.

The special structure of the problem comes in when we prove certain monotonicity properties of all the jumps, namely properties \partref{lr-limits-jumps}-\partref{lr-limits-redefine} below.  This allows us to make a simple classification of the upper and lower semicontinuous envelopes of energy solutions, showing that they are energy solutions themselves.  This simple jump structure will be very useful in \sref{energy-soln-motion-law}, where we prove a certain weak version of the dynamic slope condition. This role makes the following proposition a central result of this section.

Semicontinuous envelopes are typically important in studying the geometric properties of interface evolution problems.  The fact that these envelopes are themselves energy solutions is extremely useful to us in \sref{energy-soln-motion-law}. 

In service of analyzing the properties of energy solutions at jump times and simplifying repetitive notations let us introduce the set
\begin{equation}\label{e.replacement-U-values}
\mathcal{U}(t) = \{u_\ell(t),u(t),u_r(t),u_{\ell}(t) \wedge u_r(t), u_\ell(t) \vee u_r(t)\}
\end{equation}
which is a kind of multi-valued version of $u(t)$ indexing the values taken by $u(t)$ ``near" time $t$ as well as the upper and lower envelopes.  The set $\mathcal{U}(t)$ comes with a natural partial order induced by the time variable.  Since $u_\ell(t)$ is the limit from the left of $u(t)$ it is ``before" all the other elements of $\mathcal{U}(t)$, and since $u_r(t)$ is the limit from the right it is ``after" all the other elements of $u(t)$.  Specifically we define the partial order
\begin{equation}\label{e.time-ordering-def}
u_\ell(t) \ \trianglelefteq \ u(t), \  u_{\ell}(t) \wedge u_r(t), \  u_\ell(t) \vee u_r(t) \ \trianglelefteq \  u_r(t).
\end{equation}
To be clear this partial ordering is purely related to the ``time" of the elements of $\mathcal{U}(t)$ and is not related to the spatial ordering between the different elements of $\mathcal{U}(t)$.

The core step in the following proposition is part \partref{lr-limits-minimizer}. Essentially we show that every element of $\mathcal{U}(t)$ is a valid intermediate state for the evolution at time $t$.  More specifically, the energy dissipation relation is still satisfied as long as the system jumps from $u_\ell(t)$ to any element of $\mathcal{U}(t)$ and then to $u_r(t)$.  

\begin{proposition}\label{p.lr-limits}
Suppose $u$ is an energy solution on $[0,T]$ then:
\begin{enumerate}[label = (\roman*)]
\item\label{part.lr-limits} (Left and right limits exist and are consistent) At every $t \in [0,T]$, $u(t)$, $\mathcal{J}(u(t))$, $P(u(t)$, and $\Omega(u(t))$ have left and right limits and 
\begin{equation}\label{e.consistency-lr-limits}
\begin{array}{c}
\lim_{s \to_{\ell / r} t} \Omega(u(t)) = \Omega(u_{\ell / r}(t)), \ \lim_{s \to_{\ell / r} t} \mathcal{J}(u(t)) = \mathcal{J}(u_{\ell / r}(t)), \\
 \hbox{ and } \  \lim_{s \to_{\ell / r} t} P(u(t)) = P(u_{\ell / r}(t))
 \end{array}
\end{equation}
\item\label{part.lr-limits-minimizer} (Jumps minimize energy plus dissipation) At every $t \in [0,T]$ for any $v,w \in \mathcal{U}(t)$ (from \eref{replacement-U-values}), with $v \trianglelefteq w$ (from \eref{time-ordering-def})
\[w \ \hbox{ minimizes } \ \mathcal{E}(v,\cdot) \ \hbox{ over } \ F(t) + H^1_0(U).\]
Note that in particular this implies all $v \in \mathcal{U}(t)$ are globally stable since $v \trianglelefteq v$.
\item\label{part.lr-limits-jumps}  (Jumps are monotone per component) At every $t \in [0,T]$
\[u_\ell(t) \wedge u_r(t) \leq u(t) \leq u_\ell(t) \vee u_r(t).\]
In particular at each $t \in [0,T]$ the upper semicontinuous and lower semi-continuous envelopes are $u^*(t) = u_r(t) \vee u_\ell(t)$ and $u_*(t) = u_r(t) \wedge u_\ell(t)$.
\item\label{part.lr-limits-redefine}  (Re-definition at jumps) Suppose that $v:[0,T] \times \R^d \to \R$ satisfies the property: 
\[\hbox{for each $t \in [0,T]$ } \ v(t) \in \mathcal{U}(t) \ \hbox{ (from \eref{replacement-U-values})}.\] Then $v(t)$ is also an energy solution on $[0,T]$.  In particular the upper and lower semicontinuous envelopes $u^*(t)$ and $u_*(t)$ are also energy solutions.
\end{enumerate}
\end{proposition}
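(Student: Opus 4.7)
The four parts build on each other: for \partref{lr-limits}, the $\textup{BV}$ time regularity from \lref{energy-soln-time-reg} plus the spatial regularity of \lref{initial-fb-regularity} delivers the left/right limits; for \partref{lr-limits-minimizer}, the global stability and the energy-dissipation equality \lref{energy-diss-eq} pass to the jump times, and the no-crossing \lref{no-crossing} reaches the $\vee$ and $\wedge$ combinations; for \partref{lr-limits-jumps}, the chained minimization from \partref{lr-limits-minimizer} forces equality in the $\Diss$ triangle inequality, which combined with no-crossing pins down the pointwise ordering on each component; and \partref{lr-limits-redefine} follows by direct verification using the equalities of \partref{lr-limits-minimizer} plus the fact that jumps occur on only a countable set of times.

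\textbf{Part \partref{lr-limits}.} Applying \lref{BV-abstract-lr} to the $\textup{BV}$ maps identified in \lref{energy-soln-time-reg} immediately gives left and right limits for $\one_{\Omega(u(t))}$, $\mathcal{J}(u(t))$ and $P(u(t))$ at every $t$. For $u(t)$ itself, any sequence $u(s_n)$ with $s_n \to t^-$ is uniformly Lipschitz and non-degenerate by \lref{initial-fb-regularity}, so after extracting a subsequence we get a uniform limit $w \in F(t)+H^1_0(U)$. Its positivity set coincides with the $L^1$ limit of $\Omega(u(s_n))$ (which, by density/non-degeneracy, is also the Hausdorff limit), and $w$ is harmonic inside it with prescribed Dirichlet trace $F(t)$, so all subsequential limits coincide. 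Set $u_\ell(t) := w$, and argue symmetrically on the right; the consistency formulas \eref{consistency-lr-limits} are then automatic.

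\textbf{Part \partref{lr-limits-minimizer}.} I first show $u_\ell(t)$ is globally stable at time $t$: given $u' \in F(t)+H^1_0(U)$, the rescaled competitor $u'_n := (F(s_n)/F(t))u'$ lies in $F(s_n)+H^1_0(U)$, so stability at $u(s_n)$ yields $\mathcal{J}(u(s_n)) \leq \mathcal{J}(u'_n) + \Diss(u(s_n),u'_n)$, and both sides pass to the limit since $F(s_n) \to F(t)$ and $\Omega(u'_n) = \Omega(u')$. Specializing \lref{energy-diss-eq} to $[s,t]$ and letting $s \to t^-$ gives
\[
\mathcal{J}(u_\ell(t)) - \mathcal{J}(u(t)) = \lim_{s \to t^-}\Dissbar(u(\cdot);[s,t]) \geq \Diss(u_\ell(t), u(t)),
\]
which combined with stability of $u_\ell(t)$ tested against $u(t)$ forces equality $\mathcal{J}(u_\ell(t)) = \mathcal{J}(u(t)) + \Diss(u_\ell(t), u(t))$. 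Since the minimum of $\mathcal{E}(u_\ell(t),\cdot)$ over $F(t)+H^1_0(U)$ equals $\mathcal{J}(u_\ell(t))$ (attained by $u_\ell(t)$ itself), this equality says $u(t)$ also achieves that minimum. A symmetric argument on $[t, s']$ with $s' \to t^+$ gives $u_r(t)$ minimizing $\mathcal{E}(u(t),\cdot)$, and chaining the resulting equalities shows $u_r(t)$ also minimizes $\mathcal{E}(u_\ell(t),\cdot)$. Finally, \lref{no-crossing} applied to the two minimizers $u_\ell, u_r$ of $\mathcal{E}(u_\ell(t),\cdot)$ shows $u_\ell \vee u_r$ and $u_\ell \wedge u_r$ are minimizers as well, covering every $v \trianglelefteq w$ pair.

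\textbf{Parts \partref{lr-limits-jumps}, \partref{lr-limits-redefine}, and the main obstacle.} Since $u_\ell, u(t), u_r$ all minimize $\mathcal{E}(u_\ell(t),\cdot)$ with common value $\mathcal{J}(u_\ell(t))$, one gets $\mathcal{J}(u_\ell) - \mathcal{J}(u_r) = \Diss(u_\ell, u_r) = \Diss(u_\ell, u(t)) + \Diss(u(t), u_r)$, so the $\Diss$ triangle inequality is an equality. Expanding separately the $\mu_+$ and $\mu_-$ contributions yields the set-theoretic betweenness $\Omega(u_\ell) \cap \Omega(u_r) \subseteq \Omega(u(t)) \subseteq \Omega(u_\ell) \cup \Omega(u_r)$ together with $\Omega(u_\ell) \subseteq \Omega(u(t)) \cup \Omega(u_r)$. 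On each connected component of $\Omega(u_\ell) \cup \Omega(u(t)) \cup \Omega(u_r)$, \lref{no-crossing} provides a total order among the three harmonic functions; the only order compatible with the containments places $u(t)$ between $u_\ell$ and $u_r$, since any alternative forces a pair to share positivity set and boundary data, hence to coincide by the strong maximum principle. For \partref{lr-limits-redefine}, any selection $v(t) \in \mathcal{U}(t)$ inherits forcing and global stability directly from \partref{lr-limits-minimizer}; the integral $\int 2 \dot F P\, d\tau$ is unchanged because $v = u$ off a countable set of jump times; the endpoint adjustments in the energy-dissipation inequality come from the $\mathcal{J}$/$\Diss$ equalities of \partref{lr-limits-minimizer} and the triangle inequality for $\Diss$. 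The main obstacle is \partref{lr-limits-jumps}: turning a single scalar triangle-equality for $\Diss$ into measure-theoretic containments strong enough to fix, via no-crossing and the maximum principle, the pointwise ordering of three harmonic functions on every component.
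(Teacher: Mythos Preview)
Your overall strategy matches the paper's, and Parts \partref{lr-limits} and \partref{lr-limits-redefine} are essentially the same. There are, however, two genuine gaps.

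\textbf{Part \partref{lr-limits-minimizer} is incomplete.} After using \lref{no-crossing} to show that $u_\ell \vee u_r$ and $u_\ell \wedge u_r$ minimize $\mathcal{E}(u_\ell,\cdot)$, you assert this ``covers every $v \trianglelefteq w$ pair''. It does not. The partial order \eref{time-ordering-def} also requires, for instance, that $u_\ell \wedge u_r$ be globally stable (i.e.\ minimize $\mathcal{E}(u_\ell \wedge u_r,\cdot)$) and that $u_r$ minimize $\mathcal{E}(u_\ell \wedge u_r,\cdot)$ and $\mathcal{E}(u_\ell \vee u_r,\cdot)$. These do not follow from what you have written; the paper obtains them by separate triangle-inequality manipulations (global stability of $u_\ell \wedge u_r$ via $\Diss(u_\ell,v) - \Diss(u_\ell,u_\ell \wedge u_r) \leq \Diss(u_\ell \wedge u_r,v)$, and then a longer chain using the sharp identity in \lref{diss-triangle-eq} for the last two). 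These extra cases are exactly what Part \partref{lr-limits-redefine} needs to handle the endpoint adjustments when $v(t_0)$ or $v(t_1)$ equals $u_\ell \wedge u_r$ or $u_\ell \vee u_r$.

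\textbf{Part \partref{lr-limits-jumps}: a spurious claim and an unnecessarily roundabout argument.} You write that ``expanding separately the $\mu_+$ and $\mu_-$ contributions'' in the triangle equality yields the extra containment $\Omega(u_\ell) \subseteq \Omega(u(t)) \cup \Omega(u_r)$. This is not justified: the $\mu_+$- and $\mu_-$-parts of $\Diss(A,B)+\Diss(B,C)-\Diss(A,C)$ are not individually nonnegative, so their separate vanishing does not follow from vanishing of the sum. Fortunately this extra containment is not needed. The sharp identity \lref{diss-triangle-eq} gives precisely $\Omega(u_\ell \wedge u_r) \subset \Omega(u(t)) \subset \Omega(u_\ell \vee u_r)$, and from there the paper goes directly to the pointwise ordering by the maximum principle: $u_\ell \wedge u_r$, $u(t)$, $u_\ell \vee u_r$ are each harmonic in their positivity sets with common trace $F(t)$ on $\partial U$, so the nested supports force $u_\ell \wedge u_r \leq u(t) \leq u_\ell \vee u_r$. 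Your component-by-component case analysis via \lref{no-crossing} can be made to work, but it is longer and relies on a containment you have not established.
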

Besides the independent interest in understanding the time regularity properties of energetic solutions, this result plays a key role in establishing the validity of the free boundary motion law in \tref{ae-viscosity-prop}.
\begin{remark}
Part \partref{lr-limits-jumps} does allow the possibility that the free boundaries $\partial \Omega(u_\ell(t))$ and $\partial \Omega(u_r(t))$ are distinct and monotonically ordered but touch in some nontrivial region. This is possible to occur: for example  consider a small ball and a large ball merging with a large pinning interval.  This would allow a portion of the free boundary opposite the ``contacting area" to stay fixed as depicted in the simulation \cite[Fig.~2]{FKPii}.  In other words, although strong maximum principle holds for the solutions, it does not hold for the free boundaries.
\end{remark}
\begin{remark}
From \lref{no-crossing} and \partref{lr-limits-jumps} in each connected component $\Lambda$ of $U \cap \{u^*(t) > 0\}$ we have  $\{u^*(t,\cdot)|_\Lambda,u_*(t,\cdot)|_\Lambda\} = \{u_r(t)|_\Lambda,u_\ell(t)|_\Lambda\}$.
\end{remark}
\begin{remark}
$v(t)$ as in \partref{lr-limits-redefine} is always measurable since $v(t) =u(t)$ except at countably many times.
\end{remark}

\begin{proof}[Proof of \pref{lr-limits}]
{\bf Part \partref{lr-limits}.} By \lref{energy-soln-time-reg} the map 
\[t\mapsto (\mathbf{1}_{\Omega(u(t))},\mathcal{J}(u(t)),P(u(t))) \ \hbox{ is in } \ BV([0,T];L^1(\R^d) \times \R \times \R)\] 
and so by \lref{BV-abstract-lr} the map has left and right limits in $L^1(\R^d) \times \R \times \R$ at each $t\in [0,T]$. 

Call $\Omega_\ell(t_0)$ to be the set obtained by the $L^1$ left limit of $\Omega(u(t))$ at $t_0$.  Let $t_n \nearrow t_0$. By uniform Lipschitz continuity of $u(t_n)$ there is a subsequence converging uniformly to some $u_\infty$ which satisfies $u_\infty = F(t_0)$ on $\partial U$. By uniform non-degeneracy of the $u(t)$ we must have $\Omega(u_\infty) = \Omega_\ell(t_0)$. This specifies that $u_\infty$ is the unique solution of
\begin{equation}\label{e.conclude-u-conv-from-omega-conv}
\Delta u_\infty = 0 \ \hbox{ in } \ \Omega_\ell(t_0) \cap U \ \hbox{ with } \ u_\infty = 0 \ \hbox{ on } \ \partial \Omega_\ell(t_0) \ \hbox{ and } \ u_\infty = F(t_0) \ \hbox{ on } \ \partial U.
\end{equation}
 Since the original sequence $t_n \nearrow t_0$ was arbitrary we find that $u(t) \to u_\infty$ uniformly as $t_n \nearrow t_0$. The argument is the same for right limits.
 
 Thus $u(t)$ has left and right limits $u_{\ell/r}(t)$ in the uniform metric at each time $t \in [0,T]$.  Next we need to show the consistency properties \eref{consistency-lr-limits}.  We already showed that $\lim_{s \to_{\ell / r} t} \Omega(u(t)) = \Omega(u_{\ell / r}(t))$ using uniform non-degeneracy in the previous paragraph.   Recalling that $P(u(t)) = F(t)^{-1}\mathcal{D}(u(t))$, to conclude we just need to check the consistency of limits for the Dirichlet energy $\mathcal{D}(u(t))$
 \begin{equation}\label{e.dirichlet-energy-lr-limits}
 \lim_{s \to_{\ell/r} t} \int_U |\grad u(t)|^2 \ dx = \int_U |\grad u_{\ell/r}(t)|^2 \ dx.
 \end{equation}
We claim that $\grad u(s)$ converges locally uniformly on $\Omega(u_{\ell/r}(t))$ to $\grad u_{\ell/r}(t)$ as $s \nearrow t$ (resp. $s \searrow t$). Along with the uniform boundedness of $\grad u(t)$ the dominated convergence theorem gives \eref{dirichlet-energy-lr-limits}. 

As usual we handle the left limit case and the right limit case is similar. Let $K$ be a compact subset of $\Omega(u_\ell(t))$. By the uniform convergence $u(s) \to u_\ell(t)$ as $s \nearrow t$ and by uniform non-degeneracy 
\[ \inf_K u(s) \geq c d(K,\Omega(u_\ell(t))^{\complement})\]
and so by uniform Lipschitz continuity 
\[ d(K,\Omega(u(s))^{\complement}) \geq c d(K,\Omega(u_\ell(t))^{\complement}) \]
for $s < t$ sufficiently close to $t$. Thus the $\grad u(s)$ are uniformly bounded and uniformly continuous on $K$ for $s<t$ sufficiently close to $t$. Typical arbitrary subsequence arguments show that $\grad u(s)$ converges uniformly on $K$ to $\grad u_\ell(t)$ as $s \nearrow t$.

{\bf Part \partref{lr-limits-minimizer}.} We introduce the extraneous notation $u_0(t) = u(t)$ in order to prove minimizer properties for  $u_{\ell} /u_0  / u_r$ at once. 

 First we check that each of $u_{\ell / 0 / r}(t)$ minimize their own respective $\mathcal{E}(\Omega(u_{\ell / 0 / r}(t)),\cdot)$ over $F(t) + H^1_0(U)$. For $u_0(t)$ this is just the global stability property of an energy solution. For $u_\ell(t)$, by the global stability property for any $t_- < t$ we know
\[\mathcal{J}(u(t_-)) \leq J\left(\frac{F(t_-)}{F(t)}v\right) + \textup{Diss}(u(t_-),v) \ \hbox{ for all } \ v \in F(t) + H^1_0(U).\]
Taking the limit as $t_-\nearrow t$, using time continuity of $F$, and applying part \partref{lr-limits}
\[\mathcal{J}(u_\ell(t)) \leq \mathcal{J}(v) + \textup{Diss}(u_\ell(t),v) \ \hbox{ for all } \ v \in F(t) + H^1_0(U).\]
 The proof that $u_r(t)$ minimizes $\mathcal{E}(u_r(t),\cdot)$ is similar, applying global stability of $u(t_+)$ and taking a limit $t_+ \searrow t$.

 Next we check that $u_{0 / r}$ minimize $\mathcal{E}(u_{\ell}(t),\cdot)$ over $F(t) + H^1_0(U)$. Apply the dissipation relation with times $t_- \leq t \leq t_+$
\[ \mathcal{J}(u(t_-))  + \int_{t_-}^{t_+}2\dot{F}(s)P(s) \ ds \geq \textup{Diss}(u(t_-),u(t_+))+\mathcal{J}(u(t_+)).\]
Taking the limit as $t_- \nearrow t$ and $t_+ \searrow t$ and using the left/right continuity of $\mathcal{J}(u(\cdot))$ and $\Omega(\cdot)$ established in part \partref{lr-limits}
\[\mathcal{J}(u_\ell(t)) \geq \mathcal{J}(u_r(t)) + \textup{Diss}(u_\ell(t),u_r(t))\]
or in the case $t_+ = t$ and $t_- \nearrow t$
\[\mathcal{J}(u_\ell(t)) \geq \mathcal{J}(u(t))+  \textup{Diss}(u_\ell(t),u(t))\]
Since we already established that $u_\ell(t)$ is always a minimizer of $\mathcal{E}(u_\ell(t),\cdot)$ also $u_{0 /r}(t)$ must be minimizers.  Finally in the case $t_- = t$ and $t_+ \searrow t$ we obtain similarly
\[\mathcal{J}(u(t)) \geq \mathcal{J}(u_r(t)) + \textup{Diss}(u(t),u_r(t))\]
which gives that $u_r(t)$ minimizes $\mathcal{E}(u(t),\cdot)$.

 Lastly we consider $u_\ell(t)\wedge u_r(t)$ and $u_\ell(t) \vee u_r(t)$.  Since $t$ is fixed for the remainder of this part of the proof we will write $u_\ell = u_\ell(t)$ and $u_r = u_r(t)$ to simplify expressions. 

First of all, since $u_\ell$ and $u_r$ minimize $\mathcal{E}(u_\ell,\cdot)$ over $F(t) + H^1_0(U)$, so also, by \eref{E-additivity-property}, $u_\ell \wedge u_r$ and $u_\ell \vee u_r$ both minimize $\mathcal{E}(u_\ell,\cdot)$ over $F(t) + H^1_0(U)$. 

Then for any $v \in F(t) + H^1_0(U)$
  \[\mathcal{E}(u_\ell, u_\ell \wedge u_r) \leq \mathcal{E}(u_\ell,v)\]
  or
  \begin{align*}
  \mathcal{J}(u_\ell \wedge u_r) &\leq \mathcal{J}(v) + \Diss(u_\ell,v) - \Diss(u_\ell,u_\ell \wedge u_r) \\
  &\leq \mathcal{J}(v) +\Diss(u_\ell \wedge u_r,v)\\
  &= \mathcal{E}(u_\ell\wedge u_r,v)
  \end{align*}
  by the dissipation distance triangle inequality \lref{diss-triangle-eq} for the last inequality.  Thus $u_\ell\wedge u_r$ is globally stable. A similar argument applies to $u_\ell \vee u_r$.
  
  Finally we need to argue that 
  \[u_r \ \hbox{ minimizes $\mathcal{E}(u_\ell\wedge u_r,\cdot)$ and $\mathcal{E}(u_\ell\vee u_r,\cdot)$ over $F(t) + H^1_0(U)$.}\]
We just argue for $\mathcal{E}(u_\ell\wedge u_r,\cdot)$, the other case is similar,
  \begin{align*}
  \mathcal{E}(u_\ell\wedge u_r,v) &=  \mathcal{E}(u_\ell,v)+\textup{Diss}(u_\ell\wedge u_r,v)-\textup{Diss}(u_\ell,v) \\
  &\geq \mathcal{E}(u_\ell,u_r)+\textup{Diss}(u_\ell\wedge u_r,v)-\textup{Diss}(u_\ell,v)\\
  &=\mathcal{J}(u_r) +\Diss(u_\ell,u_r)+ \textup{Diss}(u_\ell\wedge u_r,v)-\textup{Diss}(u_\ell,v)\\
  & = \mathcal{J}(u_r)+\Diss(u_\ell\wedge u_r,v)+\Diss(v,u_r)\\
  &\quad \quad +[\Diss(u_\ell,u_r)-\Diss(u_\ell,v)-\Diss(v,u_r)]\\
  &= \mathcal{J}(u_r)+\Diss(u_\ell\wedge u_r,u_r)\\
  &\quad \quad +(\mu_-+\mu_+)[|\Omega(v) \setminus \Omega( (u_\ell \wedge u_r) \vee u_r)| + |\Omega((u_\ell \wedge u_r) \wedge u_r) \setminus \Omega(v)|]\\
  & \quad \quad -(\mu_-+\mu_+)[|\Omega(v) \setminus \Omega( u_\ell \vee u_r)| + |\Omega(u_\ell \wedge u_r) \setminus \Omega(v)|]\\
  &=\mathcal{E}(u_\ell\wedge u_r,u_r).
  \end{align*}
  where we have used that $u_r$ minimizes $\mathcal{E}(u_\ell,\cdot)$, then the sharp triangle inequality \lref{diss-triangle-eq}, and finally that $(u_\ell \wedge u_r) \wedge u_r = u_\ell \wedge u_r$ and $(u_\ell \wedge u_r) \vee u_r = u_\ell \vee u_r$.

{\bf Part \partref{lr-limits-jumps}.} Since $t$ is fixed in this part of the proof we write $u_{\ell / 0 / r}(t) = u_{\ell / 0 / r}$ dropping the $t$ dependence. By global stability and part \partref{lr-limits-minimizer} respectively we know that $u_\ell$ and $u_r$ both minimize $\mathcal{E}(u_\ell,\cdot)$.  Furthermore \lref{no-crossing} yields that $u_\ell$ and $u_r$ are ordered in each connected component of $\Omega(u_\ell \vee u_r)$.

Next we show that $u_\ell \wedge u_r \leq u \leq u_\ell \vee u_r$.  We use the multiple minimizations in part \partref{lr-limits-minimizer} to show an ``equality in triangle inequality" which implies that $u$ must be in between $u_\ell$ and $u_r$. Intuitively speaking an intermediate jump which crosses either $u_\ell$ or $u_r$ would just incur extra dissipation cost to return to $u_r$.

We compute using part \partref{lr-limits-minimizer}
\begin{align*}
\mathcal{J}(u_r) + \textup{Diss}(u_\ell,u_r)&= \mathcal{J}(u)+\textup{Diss}(u_\ell,u)\\
&=\mathcal{J}(u_r) +\textup{Diss}(u,u_r)+\textup{Diss}(u_\ell,u).
\end{align*}
More specifically the first equality is since $u_r$ and $u$ both minimize $\mathcal{E}(u_\ell,\cdot)$, the second equality is since $u$ and $u_r$ both minimize $\mathcal{E}(u,\cdot)$. Simplifying we find the equality
\begin{equation}\label{e.triangle-equality}
 \textup{Diss}(u_\ell,u)+\textup{Diss}(u,u_r) - \textup{Diss}(u_\ell,u_r) = 0. 
\end{equation}
By the sharp triangle inequality \lref{diss-triangle-eq}
\begin{equation*}\label{e.triangle-inequality-strictness}
\textup{Diss}(u_\ell,u)+\textup{Diss}(u,u_r) - \textup{Diss}(u_\ell,u_r) = (\mu_-+\mu_+) \bigg[ |\Omega(u_\ell \wedge u_r) \setminus \Omega(u)| + |\Omega(u) \setminus \Omega(u_\ell \vee u_r)|\bigg].
\end{equation*}
Together with  \eref{triangle-equality}, the above equality shows that $\Omega(u_\ell \wedge u_r) \subset \Omega(u) \subset \Omega(u_\ell \vee u_r)$. Since $u_\ell \wedge u_r$, $u$, and $u_\ell \vee u_r$ are harmonic in their respective positivity sets by \lref{no-crossing}, then the previous set ordering and maximum principle implies $u_\ell \wedge u_r \leq u \leq u_{\ell} \vee u_r$.

  {\bf Part \partref{lr-limits-redefine}.}  Since $u_\ell(t) = u_r(t) = u(t) = F(t)$ on $\partial U$ the boundary condition is satisfied by $v(t)$. By \partref{lr-limits-minimizer} any $v(t) \in \mathcal U(t)$ is globally stable.

       Finally we need to check the dissipation relation \eref{diss-inequality} for $v$. We do this in two steps, the dissipation on the open interval $(t_0,t_1)$ plus the dissipation at the endpoints.

Fix $t_0 < t_1$ and we claim that
  \begin{equation}\label{e.open-interval-diss-v}
   \mathcal{J}(u_r(t_0))-\mathcal{J}(u_\ell(t_1))  + \int_{t_0}^{t_1}2\dot{F}(s)P(s) \ ds \geq \Diss(u_r(t_0),u_\ell(t_1))
   \end{equation}
  To prove this apply the dissipation relation for $u(t)$ with sequences $t_{0,k}$ and $t_{1,k}$ with $t_{0,k} \searrow t_0$ and $t_{1,k} \nearrow t_1$  \[ \mathcal{J}(u(t_{0,k}))-\mathcal{J}(u(t_{1,k}))  + \int_{t_{0,k}}^{t_{1,k}}2\dot{F}(s)P(s) \ ds \geq  \Diss(u(t_{0,k}), u(t_{1,k})).\]
 Sending $k \to \infty$ and using the continuities from \partref{lr-limits} shows \eref{open-interval-diss-v}.

 Now consider the endpoint dissipations, by \partref{lr-limits-minimizer},
 \[ \mathcal{J}(u_r(t_0))+\Diss(v(t_0),u_r(t_0)) = \mathcal{J}(v(t_0)) \ \hbox{ and } \  \mathcal{J}(v(t_1))+\Diss(u_\ell(t_1),v(t_1)) = \mathcal{J}(u_\ell(t_1)) \]
 since every $v(t_0) \trianglelefteq u_r(t_0)$ for every $v(t_0) \in \mathcal{U}(t_0)$ and $u_\ell(t_1) \trianglelefteq v(t_1) $ for every $v(t_1) \in \mathcal{U}(t_1)$ in the temporal partial ordering (defined in \eref{time-ordering-def}).
 
 Adding these to \eref{open-interval-diss-v} gives
 \begin{align*}
 &\mathcal{J}(v(t_0)) - \mathcal{J}(v(t_1)) + \int_{t_0}^{t_1} 2 \dot{F}(s) P(s) \ ds  \\
 & \quad \quad \geq  \Diss(v(t_0),u_r(t_0))+\Diss(u_r(t_0),u_\ell(t_1))+ \Diss(u_\ell(t_1),v(t_1))\\
 &\quad \quad \geq \Diss(v(t_0),v(t_1))
 \end{align*}
 by applying the triangle inequality \lref{diss-triangle-eq} in the last step. This completes the proof.
\end{proof}

\section{The dynamic slope condition for energy solutions}\label{s.energy-soln-motion-law} % sec 7
In this section we show that the energy dissipation balance law implies a weak notion of the dynamic slope condition \eref{dynamic-slope-condition-intro}
\begin{equation*} |\grad u|^2 = 1\pm \mu_\pm \quad \hbox{ if } \quad \pm V_n(t,x) >0.
\end{equation*}
Recall that $V_n(t,x)$ is the outward normal velocity of $\Omega(u(t))$ at $x \in \partial \Omega(u(t))$.

Recall by \pref{lr-limits} part \partref{lr-limits-redefine} if $u$ is an arbitrary energy solution on $U \times [0,T]$ then the envelopes $u^*$ and $u_*$ are also energy solutions and have a simple representation in terms of the left and right limits at each time $u_*(t) = u_\ell(t) \wedge u_r(t)$ and $u^*(t) = u_\ell(t) \vee u_r(t)$.

The main theorem of this section is that if $u(t)$ is an energy solution then $u^*(t)$  and $u_*(t)$ satisfy the dynamic slope condition (up to sets of surface measure zero) in the sub- and supersolution sense respectively.

For the statement we will use the notions of positive and negative velocity based on inner and outer touching space-time cones and the notions of sub and superdifferential, see \fref{cone-pics}. This notion of viscosity solutions is stronger than the usual comparison definition of level set velocity, since we test more free boundary points that have weaker space-time regularity. Nonetheless, in the rate-independent evolution the time variable mostly plays a role of a parameter and so it seems natural to test space and time directions differently. See the companion paper \cite{FKPii} where we proved a comparison principle and regularity results using this notion.

\begin{figure}
\begin{minipage}{.48\textwidth}
\begin{tikzpicture}[scale = 1]
\filldraw[color = gray!40] (-2,0) -- (0,0) .. controls (-.25,-.25) .. (-2,-1);
\filldraw[pattern=vertical lines] (-1,-1) -- (0,0) -- (1,-1);
\draw[black](0,0) .. controls (-.25,-.25) .. (-2,-1);
\filldraw[black] (0,0) circle (.025);
\draw[black] (-1,-1) -- (0,0) -- (1,-1);
\draw[black,dashed] (-2,0) -- (2,0);

\draw[->] (-2.25,-1) -- (-2.25,0);

\node[right] at (2,0) {$t=t_0$};
\node[left] at (-2.25,-.5) {$t$};
\node at (-1.4,-.3) {$\Omega$};
\node[below] at (0,-1) {$|x-x_0| \leq c (t_0-t)$};
\end{tikzpicture}
\end{minipage}
\begin{minipage}{.48\textwidth}
\begin{tikzpicture}[scale = 1]
\filldraw[color = gray!40] (-2,0) -- (0,0) .. controls (.25,-.25) .. (2,-1) -- (-2,-1);
\filldraw[pattern=vertical lines] (-1,-1) -- (0,0) -- (1,-1);
\draw[black](0,0) .. controls (.25,-.25) .. (2,-1);
\filldraw[black] (0,0) circle (.025);
\draw[black] (-1,-1) -- (0,0) -- (1,-1);
\draw[black,dashed] (-2,0) -- (2,0);

\draw[->] (-2.25,-1) -- (-2.25,0);

\node[right] at (2,0) {$t=t_0$};
\node[left] at (-2.25,-.5) {$t$};
\node at (-1.4,-.3) {$\Omega$};
\node[below] at (0,-1) {$|x-x_0| \leq c (t_0 - t)$};
\end{tikzpicture}
\end{minipage}
\caption{Left: velocity $c$ cone touches $\Omega(t)$ from the outside at $(t_0,x_0)$, interpreted as $V_n(t_0,x_0) \geq c$.  Right: velocity $c$ cone touches $\Omega(t)$ from the inside at $(t_0,x_0)$, interpreted as $V_n(t_0,x_0) \leq - c$. }
\label{f.cone-pics}
\end{figure}
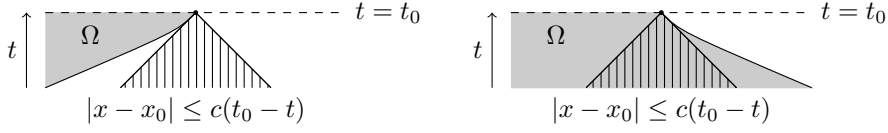

\begin{definition}\label{d.positive-velocity-cone-dynamic-slope}
Given a time varying family of domains $\Omega(t)$ we write for some $x_0 \in \partial \Omega(t_0)$
\begin{align*}
V_n(t_0,x_0) > 0 \qquad \hbox{(resp. $V_n(t_0,x_0) < 0$)}
\end{align*}
 if there are $\eta>0$ and $r_0>0$ such that \begin{equation}\label{cone_positive-dynamic-slope}
\{x: \ |x-x_0| \leq \eta(t_0-t)\} \subset \Omega(t)^{\complement} \qquad \hbox{(resp. $\Omega(t)$)} \ \hbox{ for } \ \ t_0 - r_0 \leq t < t_0.
\end{equation} 
See \fref{cone-pics}.
\end{definition}

\begin{definition}\label{d.supersubdifferential-dynamic-slope}
Given a non-negative continuous function $u$ on $U$, for each $x_0 \in \partial \Omega(u)\cap U$ define
\begin{align*}
D_+ u(x_0) &= \left\{ p \in  \R^d :  u(x) \leq p \cdot (x-x_0) + o(|x-x_0|) \ \hbox{ in } \  \overline{\Omega(u)}\right\}
\intertext{and}
D_- u(x_0) &= \left\{ p \in  \R^d :  u(x) \geq p \cdot (x-x_0) - o(|x-x_0|)\right\}.
\end{align*}
\end{definition}

\begin{theorem}\label{t.ae-viscosity-prop}
 Suppose that $u$ is an upper semicontinuous energy solution \EE{} on $U \times [0,T]$. Then for every $t\in(0,T]$ the set of points where the outward motion law fails
 \[\Gamma^+(u,t) := \{x \in \partial \Omega(t): \ V_n(t,x) > 0, \ |p|^2 < 1+\mu_+ \ \hbox{ for some } \ p \in D_+ u(t,x)\}\]
 has $\mathcal{H}^{d-1}$ measure zero.
  
  Similarly, if $u$ is a lower semicontinuous energy solution \EE{} on $U \times [0,T]$, then for every $t\in(0,T]$ the set of points where the inward motion law fails
    \[\Gamma^-(u,t) := \{x \in \partial \Omega(t): \ V_n(t,x) < 0, \ |p|^2 > 1-\mu_- \ \hbox{ for some } \ p \in D_- u(t,x)\}\]
 has $\mathcal{H}^{d-1}$ measure zero.
\end{theorem}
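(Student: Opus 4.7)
The plan is to prove the outward (subsolution) assertion; the inward assertion follows symmetrically by working with $u_*$ in place of $u^*$. The core strategy is to combine the refined minimizer characterizations of \pref{lr-limits}\partref{lr-limits-minimizer} with the Alt--Caffarelli regularity theory for Bernoulli minimizers. The crucial input is that, since $u$ is USC, $u(t_0) = u_\ell(t_0) \vee u_r(t_0)$ and $u(t_0)$ minimizes $\mathcal{E}(u_\ell(t_0), \cdot)$ over $F(t_0) + H^1_0(U)$; by \lref{inward-outward-min-props} this upgrades to full $\mathcal{J}_{1+\mu_+}$-minimality of $u(t_0)$ in every ball contained in $\R^d \setminus \overline{\Omega(u_\ell(t_0))}$.

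Building on this, I would split the free boundary as $\partial\Omega(u(t_0)) = \Gamma_{\rm new} \cup \Gamma_{\rm shared}$, where
\[
\Gamma_{\rm new} := \partial\Omega(u(t_0)) \setminus \overline{\Omega(u_\ell(t_0))}, \qquad \Gamma_{\rm shared} := \partial\Omega(u(t_0)) \cap \partial\Omega(u_\ell(t_0)),
\]
noting that $\Omega(u_\ell(t_0)) \cap \partial\Omega(u(t_0)) = \emptyset$ by openness of the positive set together with $u_\ell(t_0) \le u(t_0)$. On a neighborhood of any $x_0 \in \Gamma_{\rm new}$ the function $u(t_0)$ is a full $\mathcal{J}_{1+\mu_+}$-minimizer, so classical Alt--Caffarelli theory yields smoothness of the reduced free boundary $\partial^*\Omega(u(t_0))$ with $|\nabla u(t_0)|^2 = 1+\mu_+$, and the singular set has Hausdorff dimension at most $d-1-\varepsilon_0$. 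In particular $|p|^2 = 1+\mu_+$ for every $p \in D_+ u(t_0, x_0)$ at $\mathcal{H}^{d-1}$-a.e.\ $x_0 \in \Gamma_{\rm new}$, so $\Gamma^+(u, t_0) \cap \Gamma_{\rm new}$ is $\mathcal{H}^{d-1}$-null.

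It remains to bound $\Gamma^+(u, t_0) \cap \Gamma_{\rm shared}$. Here the idea is to show that the cone condition $V_n(t_0, x_0) > 0$ is incompatible with $x_0 \in \Gamma_{\rm shared}$ in a measure-theoretic sense. For any $x_0 \in \Gamma_{\rm shared}$, nondegeneracy of $u_\ell(t_0)$ at $x_0 \in \partial\Omega(u_\ell(t_0))$ (\lref{initial-fb-regularity}) gives $|\Omega(u_\ell(t_0)) \cap B_\rho(x_0)| \ge c \rho^d$, while the cone at scale $\rho$---taking $s_\rho = t_0 - \rho/\eta$---forces $\Omega(u(s_\rho)) \cap B_\rho(x_0) = \emptyset$; thus $\Omega(u(\cdot))$ must ``flood'' $B_\rho(x_0)$ by at least $c\rho^d$ on the time interval $[s_\rho, t_0]$. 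Running a Vitali cover of $\Gamma^+(u, t_0) \cap \Gamma_{\rm shared}$ at scale $\rho$ and applying the energy dissipation equality \lref{energy-diss-eq} on $[s_\rho, t_0]$ against a competitor obtained by filling each covering ball with rescaled $u(t_0)$ values yields a lower bound on the dissipation involving the slope deficit, which is only consistent with \lref{energy-soln-time-reg} as $\rho \to 0$ if $\mathcal{H}^{d-1}(\Gamma^+ \cap \Gamma_{\rm shared}) = 0$.

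The main obstacle is the delicate bookkeeping of the dissipation coefficient at the shared boundary: perturbations near $x_0 \in \Gamma_{\rm shared}$ may pass through both $\Omega(u_\ell(t_0))$---where dissipation picks up $\mu_-$ for retreat---and its complement, where outward motion costs $\mu_+$. Obtaining a clean sign for the resulting energy change thus requires a quantitative use of the cone to show that the relevant perturbation volume lies predominantly outside $\overline{\Omega(u_\ell(t_0))}$. Equivalently, one needs a stronger pointwise separation between $\Omega(u(s_\rho))$ and $\Omega(u_\ell(t_0))$ near $x_0$ than what is supplied by the abstract BV-in-time theory; this can plausibly be extracted by testing global stability of $u(s_\rho)$ against rescalings of $u(t_0)$ (as in \lref{energy-diss-eq}) and exploiting the no-crossing property of \lref{no-crossing}. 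Once this quantitative step is in place, the remaining argument is a fairly standard covering/energy-dissipation contradiction in the spirit of Alt--Caffarelli.
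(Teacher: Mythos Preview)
Your treatment of $\Gamma_{\rm new}$ is correct and matches the paper's \cref{jump-subsoln-cond}: on any ball disjoint from $\overline{\Omega(u_\ell(t_0))}$ the function $u(t_0)$ is a full $\mathcal{J}_{1+\mu_+}$ minimizer, so the slope condition holds there in the viscosity sense (invoking full Alt--Caffarelli regularity is more than needed, but does the job).

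The argument for $\Gamma_{\rm shared}$, however, has a genuine gap. The ``obstacle'' you flag --- that an inward perturbation near $x_0\in\Gamma_{\rm shared}$ may straddle $\partial\Omega(u_\ell(t_0))$ and therefore pick up a mixture of $\mu_+$ and $\mu_-$ --- is not a technical nuisance but a structural feature of working relative to $u_\ell(t_0)$: since $x_0\in\partial\Omega(u_\ell(t_0))$, the reference set touches the perturbation region at every scale, and neither the no-crossing lemma nor ``testing stability of $u(s_\rho)$ against rescalings of $u(t_0)$'' produces the quantitative separation you need. Moreover your competitor, described as ``filling each covering ball with rescaled $u(t_0)$ values,'' points in the wrong direction: an outward perturbation of $u(s_\rho)$ tested against stability at $s_\rho$ only recovers the outward-minimizer bound $|\nabla u|^2\le 1+\mu_+$, not the saturation you want.

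The paper resolves this by a different decomposition. It first treats times $t_0$ at which $P(u(\cdot))$ is left continuous, and there compares not with $u_\ell(t_0)$ but with $u(t_{-1})$, $t_{-1}=t_0-\delta$, via the energy dissipation inequality on $[t_{-1},t_0]$. The competitor is an \emph{inward} perturbation $u_\delta\le u(t_0)$ built in \lref{barrier-construction}: at each $x_0\in\Gamma^+$ one takes the superdifferential plane, shifts it inward by $O(\delta)$, and bends it so that it is a harmonic barrier touching $u(t_0)$ from above on a cylinder of scale $\eta\delta$. The cone condition $V_n>0$ is used precisely to guarantee $\Omega(u(t_{-1}))\subset\Omega(u_\delta)$ (see \eref{positive-speed-outcome2}), so the perturbation region lies entirely in $\Omega(u(t_{-1}))^{\complement}$ and the dissipation coefficient is unambiguously $\mu_+$ --- this is how the obstacle you identify is avoided. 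Plugging $u_\delta$ into stability at $t_{-1}$ and the dissipation inequality on $[t_{-1},t_0]$ (the $A+B+C$ splitting in the proof) yields $(\mu_+ - c)|\Omega(t_0)\setminus\Omega(u_\delta)|\ge \mu_+|\Omega(t_0)\setminus\Omega(u_\delta)| - o(\delta)$, a contradiction since $|\Omega(t_0)\setminus\Omega(u_\delta)|\ge c\delta$. Jump times are then reduced to this case by redefining $u(t_0):=u_\ell(t_0)$ (\pref{lr-limits}\partref{lr-limits-redefine}) and observing $\Gamma^+(u,t_0)\subset\Gamma^+(\tilde u,t_0)$.
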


We will only prove the result for $\Gamma^+$. The proof for $\Gamma^-$ is analogous.

\begin{remark}
The sub and superdifferential may both be empty at some free boundary points. However, for energetic solutions, we can follow standard blow-up arguments (see \cite[Section 3.3]{CaffarelliSalsa}) to show that the sub and super-differential are both non-trivial $\mathcal{H}^{d-1}$-a.e. on $\partial \Omega(u(t))$. 

Recall that $\partial \Omega(u(t))$ is a finite perimeter set and $\partial \Omega(u(t))$ has finite $\mathcal{H}^{d-1}$ measure (\lref{initial-fb-regularity}) for each $t>0$. Therefore the reduced boundary $\partial_*\{u(t)>0\}$ has full $\mathcal{H}^{d-1}$ measure on $\partial \{u(t)>0\}$. 

The sub and superdifferentials are both non-trivial at every point of the reduced boundary.  More precisely, at $x_0 \in\partial_*\{u>0\}$ with measure theoretic normal $n_0$ consider the blow-up sequence
\[u_r(x) = \frac{u(x_0+rx)}{r}\]
which are uniformly bounded, uniformly non-degenerate, and uniformly Lipschitz. All subsequential blow up limits (not necessarily unique) must then be viscosity solutions of
\[\Delta v = 0 \ \hbox{ in } \ \{x \cdot n_0>0\} \ \hbox{ and } \ 1-\mu_- \leq |\grad v|^2 \leq 1+\mu_+ \ \hbox{ on } \ \partial \{x \cdot n_0 >0\}.\]
From this one can conclude that for any (subsequential, locally uniform) blow up limit
\[(1-\mu_-)^{1/2}(x \cdot n_0)_+ \leq v(x) \leq (1+\mu_+)^{1/2}(n_0\cdot x)_+\]  
and this implies that at least
\[(1+\mu_+)^{1/2} n_0 \in D_+u(x_0) \ \hbox{ and } \ (1-\mu_-)^{1/2} n_0 \in D_-u(x_0),\]
i.e. the sub and superdifferential are nontrivial on the measure theoretic reduced boundary.
\end{remark}

Before we proceed to the details let us give a description of the proof, which also explains why we can only prove the solution condition in an almost everywhere sense.

 We start by presenting the formal argument, valid in the case that everything is $C^1$ in space and time. Computing the time derivative directly and integrating by parts
\[ \frac{d}{dt}\mathcal{J}(u(t)) = \int_{\partial \Omega(t)} (1-|\grad u|^2) V_n \ dS +2\dot{F}(t)P(t)\]
but also differentiating the energy dissipation balance from \lref{energy-diss-eq} yields
\[ \frac{d}{dt}\mathcal{J}(u(t)) = 2 \dot{F}(t) P(t) - \int_{\partial \Omega(t)} \mu_+(n) (V_n)_+ + \mu_-(n) (V_n)_- \ dS.\]
Combining the above two identities we find
\begin{equation}\label{e.formal-energy-argument-conclusion}
 \int_{\partial \Omega(t)} (1+\mu_+(n)-|\grad u|^2) (V_n)_+ + (|\grad u|^2 - 1 + \mu_-(n))(V_n)_- \ dS = 0. 
 \end{equation}
 Both terms in the above integral are non-negative (Corollary~\ref{c.energy-stability-implies-viscosity-soln}), and so they must actually be zero pointwise.

Of course we cannot exactly use the formal argument. Even without jumps it is tricky to justify taking a time derivative.  Instead we need to make a similar kind of energy argument with discrete differences.  Furthermore, even if we could justify the identity \eref{formal-energy-argument-conclusion}, without continuity of $\grad u$ we could only derive the slope condition in the surface measure a.e. sense. Recall that we do not know  $C^1$ regularity of general energy solutions. The regularity theory of \cite{FKPii} applies only to viscosity solutions and we are trying to prove a viscosity solution type property. This is why we can only achieve the dynamic slope condition in the almost everywhere sense.

Although we do need to deal carefully with the jumps, the situation is actually better when the free boundary jumps: in this case we can guarantee that slope condition is satisfied pointwise \emph{everywhere}. 
\begin{corollary}\label{c.jump-subsoln-cond}

Let $F>0$ and suppose that $u\geq u_\ell$ (or $u\leq u_\ell$) is a minimizer of
\begin{equation}\label{energy_above}
\mathcal{E}(u_\ell,u) = \mathcal{J}(u) + \textup{Diss}(u_\ell,u) \ \hbox{ over } \ F+H^1_0(U).
\end{equation}
Then for any $x_0 \in \partial \Omega(u) \setminus \overline{\Omega(u_\ell)}$ (or $x_0 \in \partial\Omega(u) \cap \Omega(u_\ell)$)
\[|p|^2 \geq  (1+\mu_+) \  \hbox{ for all } \  p \in D_+u(x_0) \  \ \ (\hbox{or } |p|^2\leq (1-\mu^-)  \  \hbox{ for all } \  p \in D_-u(x_0)).\]

\end{corollary}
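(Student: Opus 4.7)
The plan is first to localize the variational problem to a pure $\mathcal{J}_{1 \pm \mu_\pm}$ minimization near $x_0$ and then to argue by blow-up. Consider the case $u \geq u_\ell$ with $x_0 \in \partial\Omega(u) \setminus \overline{\Omega(u_\ell)}$ (the dual case is symmetric). I would pick $r > 0$ so small that $\overline{B_r(x_0)} \cap \overline{\Omega(u_\ell)} = \emptyset$. For any $v \in F + H^1_0(U)$ that agrees with $u$ outside $B_r(x_0)$, the receding contribution $\mu_-|\Omega(u_\ell) \setminus \Omega(\cdot)|$ to the dissipation is unchanged, and a short calculation using $\mathbf{1}_{\Omega(u_\ell)}|_{B_r(x_0)} \equiv 0$ yields
\[\mathcal{E}(u_\ell, v) - \mathcal{E}(u_\ell, u) = \mathcal{J}_{1+\mu_+}(v; B_r(x_0)) - \mathcal{J}_{1+\mu_+}(u; B_r(x_0)).\]
Hence $u$ is a local minimizer of $\mathcal{J}_{1+\mu_+}$ in $B_r(x_0)$ (in particular both inward and outward). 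In the dual case $u \leq u_\ell$ with $x_0 \in \partial\Omega(u) \cap \Omega(u_\ell)$, picking $\overline{B_r(x_0)} \subset \Omega(u_\ell)$ and repeating the calculation (now only the receding term changes) shows that $u$ is a local minimizer of $\mathcal{J}_{1-\mu_-}$ in $B_r(x_0)$.

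Next I would argue by contradiction. Suppose $p \in D_+ u(x_0)$ with $|p|^2 < 1+\mu_+$. Consider the rescalings $u_\rho(x) := \rho^{-1} u(x_0 + \rho x)$; by the uniform Lipschitz and non-degeneracy bounds of \lref{initial-fb-regularity}, a subsequence $\rho_k \to 0$ produces $u_{\rho_k} \to u_0$ locally uniformly on $\R^d$ with local Hausdorff convergence of positivity sets, $u_0 \geq 0$, $u_0(0) = 0$, $0 \in \partial \Omega(u_0)$, and $u_0$ is a global minimizer of $\mathcal{J}_{1+\mu_+}$ on $\R^d$. By the Weiss monotonicity formula for Bernoulli minimizers $u_0$ may be taken $1$-homogeneous. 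The defining inequality for $p \in D_+ u(x_0)$ passes through the rescaling to $u_0(x) \leq p \cdot x$ for every $x \in \R^d$, which combined with $u_0 \geq 0$ gives $\Omega(u_0) \subset \{p \cdot x > 0\}$. As a positive $1$-homogeneous harmonic function on the open cone $\Omega(u_0)$ vanishing on its boundary, $u_0|_{S^{d-1}}$ is the first Dirichlet eigenfunction of $-\Delta_{S^{d-1}}$ on $\omega := \Omega(u_0) \cap S^{d-1}$ with eigenvalue $d-1$. Strict monotonicity of the first Dirichlet eigenvalue under domain inclusion, together with the fact that $d-1$ is the first eigenvalue precisely for the hemisphere, forces $\omega$ to equal the hemisphere $\{p \cdot x > 0\} \cap S^{d-1}$, so $u_0(x) = \alpha (p \cdot x)_+/|p|$ and the free boundary condition pins $\alpha = \sqrt{1+\mu_+}$. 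Testing $u_0(x) \leq p \cdot x$ at $x = p/|p|$ then gives $\sqrt{1+\mu_+} \leq |p|$, the desired contradiction. The $u \leq u_\ell$ case is identical: the blow-up satisfies $u_0(x) \geq p \cdot x$, so $\Omega(u_0) \supset \{p \cdot x > 0\}$, and the symmetric eigenvalue argument plus the slope $\sqrt{1-\mu_-}$ yields $|p|^2 \leq 1 - \mu_-$.

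The main technical obstacle I anticipate is the classification step for the $1$-homogeneous blow-up: Weiss' monotonicity (to secure $1$-homogeneity) together with the sharp Dirichlet eigenvalue rigidity on the sphere (to pin the cone to a half-space) is classical for one-phase Bernoulli minimizers but not developed in the body of the paper, so it would need to be cited or recorded. An alternative avoiding any blow-up would be to adapt the direct variational argument of \lref{inward-outward-min-implies-vs} using a non-smooth sliding-cone barrier $\varphi(x) = p \cdot (x-x_0) + \varepsilon|x-x_0|$ restricted to $\overline{\Omega(u)}$; this is delicate because such a $\varphi$ is only sub-harmonic rather than strictly super-harmonic and the inequality $\varphi \geq u$ holds only within a cone around $\hat p$, so that the set of points cut by $\varphi - \delta$ must be controlled by a non-degeneracy/density argument rather than appearing automatically as in Lemma \ref{l.inward-outward-min-implies-vs}.
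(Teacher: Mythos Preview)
Your first paragraph is exactly the localization step the paper carries out via \lref{inward-outward-min-props}: since $B_r(x_0)$ is disjoint from $\overline{\Omega(u_\ell)}$, the augmented energy reduces to $\mathcal{J}_{1+\mu_+}$ in that ball and $u$ is a full (in particular inward) minimizer there. From this point the paper simply invokes \lref{inward-outward-min-implies-vs} to conclude the viscosity subsolution condition $|\nabla u|^2 \geq 1+\mu_+$, which is the desired bound on $D_+u(x_0)$.

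Your second paragraph takes a genuinely different route: blow-up, Weiss monotonicity to force $1$-homogeneity, and the spherical eigenvalue rigidity to pin the cone to a half-space. This is correct and is the classical classification machinery for one-phase minimizers, but it is considerably heavier than what the paper does, and as you note it imports results not developed in the text. The paper's direct variational perturbation buys a self-contained argument; your blow-up approach buys a conceptually clean picture of why the slope is exactly $\sqrt{1+\mu_+}$ at such points.

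Your worry in the third paragraph about the direct approach is overstated. The obstacle you identify, that the naive cone barrier $p\cdot(x-x_0)+\varepsilon|x-x_0|$ is subharmonic, is real, but it is not how the paper proceeds. The barrier construction in Step~1 of the proof of \lref{barrier-construction} shows precisely how to pass from $p\in D_+^{\sigma,r_0}u(x_0)$ to a smooth \emph{harmonic} test function $\psi_\delta$ via an anisotropic quadratic correction $a_2(\eta_t\delta)^{-1}(|x'|^2-(d-1)x_d^2)$ in place of the cone term. With that barrier in hand the energy comparison of \lref{inward-outward-min-implies-vs} goes through verbatim, so no blow-up or external classification is needed.
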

\begin{proof}
Apply \lref{inward-outward-min-props} and \lref{inward-outward-min-implies-vs}.
\end{proof}

In order to make the main ideas of the proof of \tref{ae-viscosity-prop} more clear we will state in a separate lemma the construction of the energy competitor.

First we explain the idea of the energy competitor and then give a precise statement in \lref{barrier-construction} below. Suppose that the advancing part of the dynamic slope condition fails at some time $t_0$ on a set of positive $\mathcal{H}^{d-1}$ measure. Then for times $t_{-1} = t_0 - \delta$ sufficiently close to $t_0$, we can perform an inward perturbation of $u(t_0)$ in the normal direction in a region with measure \emph{at least} $O(\delta)$, and without crossing $u(t_{-1})$.  This perturbation will also have slope strictly smaller than the pinning interval endpoint value $(1+\mu_+)^{1/2}$ in the perturbation region. This allows us to estimate the change in the Dirichlet energy.

\begin{lemma}\label{l.barrier-construction}
 Let $u$ be upper semi-continuous on $[0,T]$, harmonic in $\Omega(u(t))$ at each time, and satisfying the conclusions of \lref{initial-fb-regularity} (e.g. $u$ an upper semicontinuous energy solution). 
 
 Suppose for some $t_0 \in (0,T]$ 
\[\mathcal{H}^{d-1}\left(\bigg\{x \in \partial \Omega(t_0): \ V_n(t_0,x) > 0, \ |p|^2 < 1+\mu_+ \ \hbox{ for some } \ p \in D_+ u(t_0,x)\bigg\}\right) > 0. \] 
Then there exists $c>0$ such that the following holds: for all sufficiently small $\delta = t_0 - t_{-1} >0$  there is a function  $u_{\delta}\in H^1(U)$  with $u_\delta\geq 0$ and $u_\delta = F(t_0)$ on $\partial U$ such that 
\begin{equation}\label{e.perturbation-properties}
 \Omega(u(t_{-1})) \subset \Omega(u_\delta) \subset \Omega( u(t_0)), \ |\Omega(u(t_0)) \setminus \Omega(u_\delta)| \geq c\delta
 \end{equation}
and
\begin{equation}\label{e.dirichlet-energy-barrier-ineq}
\int_{\Omega(u_\delta)} |\grad u_\delta|^2 \ dx - \int_{\Omega(t_0)} |\grad u(t_0)|^2 \ dx \leq (1+\mu_+ - c)|\Omega(u(t_0)) \setminus \Omega(u_\delta)|.
\end{equation}
\end{lemma}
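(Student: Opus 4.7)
First, reduce to a uniform subset. Each point of $\Gamma^+(u,t_0)$ comes with a cone aperture $\eta>0$, a radius $r_0>0$, and a supergradient $p \in D_+u(t_0,\cdot)$ with $|p|^2<1+\mu_+$, together with some remainder rate in the superdifferential definition. I would apply countable additivity to extract a Borel subset $\Gamma' \subset \Gamma^+(u,t_0)$ with $\mathcal{H}^{d-1}(\Gamma')>0$ on which all these parameters are uniform: there exist fixed $\eta,c_0,r_0>0$ such that for each $x_0 \in \Gamma'$ one has $\{|x-x_0|\le \eta(t_0-t)\} \subset \Omega(u(t))^c$ for $t\in[t_0-r_0,t_0)$, and there is some $p(x_0)$ with $|p(x_0)|^2\le 1+\mu_+-2c_0$ satisfying the quantitative bound $u(t_0,x) \le p(x_0)\cdot(x-x_0) + \tfrac{c_0}{2}|x-x_0|$ on $B_{r_0}(x_0)\cap\overline{\Omega(u(t_0))}$.

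Next, build the competitor. For $\delta>0$ small, set $r_\delta := \kappa\delta$ where $\kappa>0$ is a large constant to be chosen, with $\kappa \le \eta/4$ so that $B_{2r_\delta}(x_j) \subset \Omega(u(t_{-1}))^c$ via the cone condition. The Vitali $5r$-covering lemma applied to $\{B_{r_\delta}(x) : x \in \Gamma'\}$ produces a finite disjoint family $\{B_{r_\delta}(x_j)\}$ with $x_j \in \Gamma'$ and $\Gamma' \subset \bigcup_j B_{5r_\delta}(x_j)$, so that $\sum_j r_\delta^{d-1} \gtrsim \mathcal{H}^{d-1}(\Gamma')$. Take smooth bumps $\chi_j$ with $\chi_j \equiv 1$ on $B_{r_\delta/2}(x_j)$, $\operatorname{supp}\chi_j \subset B_{r_\delta}(x_j)$, $0\le \chi_j\le 1$ and $|\nabla\chi_j| \le C/r_\delta$; the supports are disjoint, so $\chi := \sum_j \chi_j \le 1$. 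Define
\[
u_\delta := \bigl(u(t_0)-\delta\chi\bigr)_+.
\]
Clearly $u_\delta \in H^1(U)$, $u_\delta = F(t_0)$ on $\partial U$, and $\Omega(u_\delta) \subset \Omega(u(t_0))$. The dry region $A := \Omega(u(t_0))\setminus\Omega(u_\delta)$ lies inside $\bigcup_j B_{r_\delta}(x_j)$; by the cone condition each $B_{r_\delta}(x_j) \subset \Omega(u(t_{-1}))^c$, so $A \cap \Omega(u(t_{-1})) = \emptyset$. Combined with $\Omega(u(t_{-1})) \subset \Omega(u(t_0))$ for $\delta$ small (a consequence of upper semicontinuity of $u$, non-degeneracy, and the characterization $u^* = u_\ell \vee u_r \ge u_\ell$ together with the uniform convergence $u(t_{-1}) \to u_\ell(t_0)$ from \pref{lr-limits}), this yields $\Omega(u(t_{-1})) \subset \Omega(u_\delta)$. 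For $|A| \gtrsim \delta$, in each $B_{r_\delta/2}(x_j)$ the quantitative superdifferential forces $\{u(t_0) < \delta\}$ to contain a slab of measure $\gtrsim \delta r_\delta^{d-1}/|p(x_j)|$; summing and using $\sum_j r_\delta^{d-1} \gtrsim \mathcal{H}^{d-1}(\Gamma')$ gives $|A| \ge c\delta$.

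For the Dirichlet energy estimate, expand $u_\delta = u(t_0)-\delta\chi$ on $\Omega(u_\delta)$ and integrate the cross term by parts using harmonicity of $u(t_0)$ in $\Omega(u(t_0))\supset\Omega(u_\delta)$ with $\chi=0$ on $\partial U$:
\[
\int_{\Omega(u_\delta)} \nabla u(t_0)\cdot\nabla\chi\,dx = \int_{\partial\Omega(u_\delta)\cap U} \chi\,\partial_\nu u(t_0)\,dS,
\]
with $\nu$ outward to $\Omega(u_\delta)$. Separately, applying Green's identity in $A$ (using harmonicity of $u(t_0)$ in $A$, the vanishing $u(t_0)=0$ on $\partial\Omega(u(t_0))$, and the boundary condition $u(t_0)=\delta\chi$ on $\partial\Omega(u_\delta)\cap U$) gives $\int_A |\nabla u(t_0)|^2\,dx = -\delta\int_{\partial\Omega(u_\delta)\cap U}\chi\,\partial_\nu u(t_0)\,dS$. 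Combining eliminates the boundary integral and yields the clean identity
\[
\int_U |\nabla u_\delta|^2\,dx - \int_U |\nabla u(t_0)|^2\,dx = \int_A |\nabla u(t_0)|^2\,dx + \delta^2 \int_U |\nabla\chi|^2\,dx.
\]
The second term is bounded by $C\delta^2 \sum_j r_\delta^{d-2} \lesssim (\delta/r_\delta)\,|A| = (1/\kappa)|A|$, which is at most $(c_0/2)|A|$ once $\kappa$ is large. For the first term, the quantitative superdifferential together with non-degeneracy and harmonicity pins down $|p(x_j)|\bigl((x-x_j)\cdot n_j\bigr)_+$ as the unique blow-up of $u(t_0)$ at each $x_j\in\Gamma'$; a standard blow-up compactness argument (after a further Egorov-style refinement of $\Gamma'$ to make the convergence rate uniform in $j$) then gives $|\nabla u(t_0)|^2 \le 1+\mu_+ - 2c_0 + o_\delta(1)$ in an integrated sense over $A\cap B_{r_\delta}(x_j)$ as $\delta \to 0$. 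Adding the two contributions yields $\int|\nabla u_\delta|^2-\int|\nabla u(t_0)|^2 \le (1+\mu_+-c_0)|A|$, proving the lemma with $c := c_0$.

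The main technical obstacle is the strict slope bound $|\nabla u(t_0)|^2 \le 1+\mu_+-2c_0+o_\delta(1)$ on $A$ with a rate uniform in $j \in \{1,\dots,N\}$; the Lipschitz and viscosity subsolution bound only give $|\nabla u|^2\le 1+\mu_+$, so the strict improvement must come from the superdifferential via a blow-up compactness argument (identifying the half-plane profile and quantifying the approach), together with a final Egorov-style refinement of $\Gamma'$ that upgrades the pointwise-in-$x_j$ convergence into uniform convergence.
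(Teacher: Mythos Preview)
Your energy identity
\[
\int_U |\nabla u_\delta|^2 - \int_U |\nabla u(t_0)|^2 = \int_A |\nabla u(t_0)|^2 + \delta^2\int_U|\nabla\chi|^2
\]
is correct and elegant, but the proposal has a genuine gap precisely where you flag the ``main technical obstacle'': the bound $\int_A|\nabla u(t_0)|^2\le (1+\mu_+-2c_0+o_\delta(1))|A|$. Blow--up compactness at $x_j$ gives only locally uniform $C^0$ convergence of the rescalings $v_k(z)=u(t_0,x_j+r_\delta z)/r_\delta$; gradients converge only on compact subsets of the limiting positivity set. The set $A$ in rescaled coordinates is the strip $\{0<v_k\le 1/\kappa\}$, which hugs $\partial\{v_k>0\}$, exactly where $C^1$ convergence fails without $C^{1,\alpha}$ free boundary regularity. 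Nothing in the hypotheses (Lipschitz, non--degeneracy, density, perimeter) rules out oscillation of $|\nabla u(t_0)|$ up to $(1+\mu_+)^{1/2}$ on a positive--density portion of $A$. An Egorov refinement makes the $o(|x-x_0|)$ rate uniform in $x_0$, but that is still zeroth--order information and does not upgrade to uniform gradient control in the strip.

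There is a second, related difficulty in your error bookkeeping. You need $\delta^2\int|\nabla\chi|^2\lesssim \kappa^{-1}|A|$, which unpacks to $|A\cap B_{r_\delta}(x_j)|\gtrsim \delta\, r_\delta^{d-1}$. But the quantitative superdifferential with linear error $\tfrac{c_0}{2}|x-x_j|$ is scale--invariant, so at scale $r_\delta=\kappa\delta$ the error is $\sim c_0\kappa\delta\gg\delta$ and gives no control on $\{u(t_0)\le\delta\}$; without a \emph{lower} bound on $u(t_0)$ (you only have density, not a subdifferential) the slab estimate is unjustified. The only robust lower bound is $|A\cap B_{r_\delta}(x_j)|\gtrsim\delta^d$ from density plus Lipschitz, which for $d\ge3$ makes $\delta^2\int|\nabla\chi|^2/|A|\gtrsim\kappa^{d-2}$, the wrong direction in $\kappa$.

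The paper sidesteps both problems by \emph{not} trying to control $|\nabla u(t_0)|$ on $A$. Instead, at each $x_j$ it builds an explicit harmonic barrier $\psi_{x_j,\delta}$ (a tilted plane plus a quadratic bending term) with $|\nabla\psi_{x_j,\delta}|^2\le 1+\mu_+-\tfrac12\eta$ \emph{by construction}, which lies above $u(t_0)$ on the boundary of a small cylinder and has $\{\psi_{x_j,\delta}\le0\}\supset B_{c\eta\delta}(x_j)$. Setting $u_\delta=u(t_0)\wedge\min_j\psi_{x_j,\delta}$ produces a Lipschitz function that is \emph{superharmonic} in $\Omega(u(t_0))$, and then the energy inequality comes from \lref{intbyparts2} \eref{intbyparts2-supersoln}, which bounds the Dirichlet energy difference by $\int_{\Omega(u(t_0))\setminus\Omega(u_\delta)}|\nabla u_\delta|^2=\int_A|\nabla\psi|^2$ rather than $\int_A|\nabla u(t_0)|^2$. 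The strict gradient gain is read off the barrier, not off $u(t_0)$, and no cutoff gradient term appears.
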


\begin{remark}
The constant $c>0$ in above Lemma is not very easily quantified, it depends delicately on the hypothesis that $\mathcal{H}^{d-1}(\Gamma^+(u,t_0))>0$ (not just on the measure itself, but on the consequences of being positive measure).
\end{remark}

We defer the proof of  \lref{barrier-construction} to the end of the section.  It is worth emphasizing that this part is the technical heart of the proof of \tref{ae-viscosity-prop}.

Now we give the proof of the weak solution property \tref{ae-viscosity-prop} via an energy argument using \lref{barrier-construction}.  \pref{lr-limits} also plays a central role allowing us to reduce to the case of left continuous times.

\begin{proof}[Proof of \tref{ae-viscosity-prop}]
Fix $t_0 \in (0,T]$, we first assume that $P(u(t))$ is left continuous at $t_0$.  Jump times will be considered at the end, building on the result at left continuous times and using the upper semi-continuity of $u$ as well as the other regularity properties of energy solutions from \pref{lr-limits}. 

{\bf Step 1.} Assume for now that
\[|P(u(t_0)) - P(u(t))| \leq \sigma(t_0 - t) \ \hbox{ for } \ t \leq t_0\]
with some modulus of continuity $\sigma$. Suppose that
\[\mathcal{H}^{d-1}\left(\Gamma^+(u,t)\right) > 0. \] 
Then \lref{barrier-construction} yields $u_\delta$, $t_{-1},t_0$ for small $\delta>0$.  We define
\[ v(x) := \frac{F(t_{-1})} {F(t_0)}u(t_0,x) \ \hbox{ and } \ v_\delta(x) := \frac{F(t_{-1})} {F(t_0)} u_\delta(x).\]

From the energy dissipation inequality \eref{diss-inequality} evaluated on $[t_{-1},t_0]$, we derive
\begin{align}
\textup{Diss}(\Omega(t_{-1}),\Omega(t_0)) & \leq \mathcal{J}(u(t_{-1})) - \mathcal{J}(u(t_0)) + \int_{t_{-1}}^{t_0} 2 \dot{F}(t)P(t) \ dt\notag\\
&=[\mathcal{J}(u(t_{-1})) - \mathcal{J}(v_\delta(t_0))]+[\mathcal{J}(v_\delta(t_0)) - \mathcal{J}(v(t_0))] \notag\\
&\quad +\left[\mathcal{J}(v(t_0)) - \mathcal{J}(u(t_0))+ \int_{t_{-1}}^{t_0} 2 \dot{F}(t)P(t) \ dt\right] \notag \\
&= A+B+ C.\label{e.energy-diss-expand-udelta}
\end{align}
We next explain, individually, how to bound $A,B,C$ to yield a contradiction.

Since $v_\delta = F(t_{-1})$ on $\partial U$, we can apply the stability property \dref{energy_solution}\eqref{e.stability} at time $t_{-1}$ to obtain
\[A=\mathcal{J}(u(t_{-1})) - \mathcal{J}(v_\delta) \leq \textup{Diss}(\Omega(t_{-1}),\Omega_\delta(t_0)).  \]

Next the energy inequality \eref{dirichlet-energy-barrier-ineq} from \lref{barrier-construction} yields
\[B:=\mathcal{J}(v_\delta(t_0)) - \mathcal{J}(v(t_0)) \leq \left( \frac{F(t_{-1})^2} {F(t_0)^2}(1+\mu_+ - c) - 1 \right)|\Omega(t_0) \setminus \Omega_\delta(t_0)|.\]
Recall that $c>0$ is independent of $\delta>0$, so from the Lipschitz continuity of $\log F(t)$ we conclude that
\[B \leq (\mu_+-c + o_\delta(1))|\Omega(t_0) \setminus \Omega_\delta(t_0)|.\]

Lastly we address $C$. We have
\begin{align}
\mathcal{J}(u(t_0)) - \mathcal{J}(v(t_0))&= \int_U (1-\frac{F(t_{-1})}{F(t_0)} ) |Du|^2(t_0)  dx \notag\\
&= \frac{F^2(t_0)-F^2(t_{-1})}{F(t_0)} P(t_0)  \notag\\
&\geq \int_{t_{-1}}^{t_0} 2 \dot{F}(t)P(t)  dt - 2\|\dot{F}\|_{L^\infty}\sigma(t_0 - t_{-1})|t_0-t_{-1}|\notag,\\
&=  \int_{t_{-1}}^{t_0} 2 \dot{F}(t)P(t)  dt - o_\delta(1)\delta. \label{e.deriv-hits-bdrycond}
\end{align}
This is the key place where we use the left continuity hypothesis on $P$.

Now, combining the previous three inequalities into \eref{energy-diss-expand-udelta}, we have obtained
\[ \textup{Diss}(\Omega(t_{-1}),\Omega(t_0)) \leq \textup{Diss}(\Omega(t_{-1}),\Omega_\delta(t_0)) + (\mu_+ -c+ o_\delta(1))|\Omega(t_0) \setminus \Omega_\delta(t_0)| + o_\delta(1) \delta  \]
On the other hand, since $\Omega(u(t_{-1})) \subset \Omega_\delta(t_0) \subset \Omega(t_0)$ from \lref{barrier-construction}, we have
\begin{align*}
 \textup{Diss}(\Omega(t_{-1}),\Omega(t_0)) -  \textup{Diss}(\Omega(t_{-1}),\Omega_\delta(t_0)) &=  \mu_+|\Omega(t_0) \setminus \Omega_\delta(t_{-1})|- \mu_+|\Omega_\delta(t_0) \setminus \Omega(t_{-1})|\\
 &=\mu_+|\Omega(t_0) \setminus \Omega_\delta(t_0)|.
\end{align*}
Putting the above estimates together, it follows that
\[ \mu_+|\Omega(t_0) \setminus \Omega_\delta(t_0)| \leq (\mu_+ -c+ o_\delta(1))|\Omega(t_0) \setminus \Omega_\delta(t_0)| + o_\delta(1) \delta.\]
Since \lref{barrier-construction} crucially guarantees that $|\Omega(t_0) \setminus  \Omega_{\delta}(t_0)| \geq c \delta$, we can divide through by $|\Omega(t_0) \setminus \Omega_\delta(t_0)|$ and find
\[
\mu_+ \leq   \mu_+ -c + o_\delta(1)
\]
Sending $\delta  = (t_0 - t_{-1})$ to zero yields $ 0 \leq - c$, which is a contradiction.

{\bf Step 2.} (Jump times) Suppose that $P(u(t))$ is not left continuous at $t_0$.  By \pref{lr-limits} \partref{lr-limits} $u$ has left and right limits at time $t_0$, $u_\ell(t_0)$ and $u_r(t_0)$. Furthermore, by \pref{lr-limits} part \partref{lr-limits-jumps} and since $u$ is upper semicontinuous we must have $u(t_0) = u_\ell(t_0) \vee u_r(t_0)$.

By \pref{lr-limits} part \partref{lr-limits-redefine}
\[\tilde{u}(t) := \begin{cases} u(t) & t< t_0 \\
u_\ell(t_0) & t = t_0.
\end{cases}\]
is itself an energy solution on $[0,t_0]$ and $\tilde{u}(t)$ and $P(\tilde{u}(t))$ are left continuous at $t_0$.  The previous arguments give
\[\mathcal{H}^{d-1}(\Gamma^+(\tilde{u},t_0)) = 0.\]

We aim to show $\Gamma^+(u,t_0) \subset \Gamma^+(\tilde{u},t_0)$ which will complete the proof showing $\mathcal{H}^{d-1}(\Gamma^+(u,t_0)) = 0$. By \cref{jump-subsoln-cond}
\[\Gamma^+(u,t_0) \setminus \partial \Omega(u_\ell(t_0)) = \emptyset.\]
On the other hand since $u$ touches $\tilde{u}$ from above, at each $x_0 \in \partial \Omega_\ell(t_0) \cap \partial \Omega(t_0)$
\[V_n(t_0,x_0; u) > 0 \hbox{ implies } V_n(t_0,x_0; \tilde u) > 0 \hbox{,\quad and } \ D_+u(t_0,x_0) \subset D_+\tilde{u}(t_0,x_0).\]
Thus
\[\Gamma^+(u,t_0) \subset \Gamma^+(\tilde{u},t_0),\]
which concludes the proof.
\end{proof}

\begin{remark}
The regularity of the pressure $P(t)$ seems to play an important role.  In \eref{deriv-hits-bdrycond} the regularity of $P(t)$ dictates the size of a positive error term which needs to be balanced by the positive $\mathcal{H}^{d-1}$ measure of the set where the viscosity solution condition fails.
\end{remark}

Before continuing to the proof of the barrier construction \lref{barrier-construction}, it is useful to separate out one further measure-theoretic Lemma which says that if the subsolution condition fails on a set of positive surface measure, then it fails so quantitatively.

Let us first define quantified versions of the velocity and sub/superdifferentials.

Here we consider $\Omega(t) = \Omega(u(t))$.
If for given $r_0, \eta > 0$ \eqref{cone_positive-dynamic-slope} holds with $\Omega(t)^\complement$ (resp. $\Omega(t)$) we write
\[V_n^{r_0}(t_0,x_0) \geq \eta \qquad \hbox{(resp. $\leq -\eta$)}.\]

For given $r_0, \sigma >0$ we define
\[D^{\sigma,r_0}_+ u(x_0) := \left\{ p \in  \R^d : u(x) \leq p \cdot (x-x_0) + \sigma|x-x_0| \ \hbox{ in } \ B_{r_0}(x_0) \cap \overline{\{u>0\}} \right\} , \]
and 
\[D^{\sigma,r_0}_- u(x_0) := \left\{ p \in  \R^d : u(x) \geq p \cdot (x-x_0) - \sigma|x-x_0| \ \hbox{ in } \ B_{r_0}(x_0) \cap \overline{\{u>0\}} \right\} .\]
 Lastly we define a quantified set where the dynamic slope condition strictly fails:
\begin{equation}
\Gamma^+_{\eta,\sigma,r_0}(u,t) := \left\{ x \in \partial \Omega(u(t)): \begin{array}{cc}V^{r_0}_n(t,x) \geq\eta \ \hbox{ and } \  \exists p \in D_+^{\sigma,r_0} u(t,x) \\ \hbox{s.t.} \ |p|^2 - (1+\mu_+)  < -\eta\end{array}\right\}.
\end{equation}
The subdifferential version is defined similarly. In the above definition we are keeping more parameters than minimally necessary in order to clarify certain delicate points in the proof.

\begin{lemma}\label{l.monotone-convergence}
The family of sets $\Gamma^+_{\eta,r_0,\sigma}(u,t)$ is monotone decreasing with respect to $\eta$ and $r_0$ and monotone increasing with respect to $\sigma$.  Furthermore
\[\Gamma^+(u,t) = \bigcup_{\eta>0}\bigcap_{\sigma>0}\bigcup_{r_0>0} \Gamma^+_{\eta,\sigma,r_0}(u,t)\]
and thus, by monotone convergence theorem, if $\mathcal{H}^{d-1}(\Gamma^+(u,t))>0$ then there is $\eta>0$ so that for all $\sigma>0$ there is $r_0(\sigma)>0$ such that $\mathcal{H}^{d-1}(\Gamma^+_{\eta,\sigma,r_0}(u,t))>0$.
\end{lemma}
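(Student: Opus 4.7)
The plan is to verify the three monotonicity properties directly from the definitions, then prove the set-theoretic identity with $\Gamma^+(u,t)$, and finally deduce the positive-measure conclusion by ordinary continuity of the measure $\mathcal H^{d-1}$.

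For the monotonicity, I would unpack the three defining conditions of $\Gamma^+_{\eta,\sigma,r_0}(u,t)$. The cone condition $V^{r_0}_n(t,x)\geq \eta$ is stronger when $\eta$ is larger (bigger aperture) and stronger when $r_0$ is larger (longer time interval), so it is monotone decreasing in $(\eta,r_0)$. The set $D^{\sigma,r_0}_+u(x_0)$ grows with $\sigma$ and shrinks with $r_0$, and the existence of some $p\in D^{\sigma,r_0}_+u(x_0)$ with $|p|^2<1+\mu_+-\eta$ is therefore weaker (i.e.\ the set of such $x$ is larger) when $\sigma$ is larger, $r_0$ is smaller, or $\eta$ is smaller. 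Combining these gives monotone decreasing in $\eta$ and $r_0$, monotone increasing in $\sigma$.

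The main obstacle is the identity
\[\Gamma^+(u,t)=\bigcup_{\eta>0}\bigcap_{\sigma>0}\bigcup_{r_0>0}\Gamma^+_{\eta,\sigma,r_0}(u,t).\]
The inclusion $\subset$ is straightforward: if $x\in\Gamma^+(u,t)$ then $V_n(t,x)>0$ gives some $\eta_0,r>0$ with $V^{r}_n(t,x)\geq\eta_0$, while a $p\in D_+u(t,x)$ with $|p|^2<1+\mu_+$ gives a positive slack $1+\mu_+-|p|^2$; choosing $\eta$ smaller than both $\eta_0$ and this slack, then for each $\sigma>0$ taking $r_0=r_0(\sigma)\leq r$ small enough that $p\in D^{\sigma,r_0}_+u(t,x)$, places $x$ in $\Gamma^+_{\eta,\sigma,r_0(\sigma)}(u,t)$. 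The reverse inclusion $\supset$ is the technical heart. One obtains for a fixed $\eta>0$ and each $\sigma>0$ some radius $r_0(\sigma)$ and some $p_\sigma\in D^{\sigma,r_0(\sigma)}_+u(t,x)$ with $|p_\sigma|^2\leq 1+\mu_+-\eta$. Since $|p_\sigma|$ is uniformly bounded, a diagonal/Bolzano--Weierstrass argument extracts $p_{\sigma_k}\to p$ with $|p|^2\leq 1+\mu_+-\eta<1+\mu_+$. One then checks $p\in D_+u(t,x)$ by, for any $\varepsilon>0$, picking $k$ large so that both $\sigma_k<\varepsilon/2$ and $|p_{\sigma_k}-p|<\varepsilon/2$, which yields $u(x)\leq p\cdot(x-x_0)+\varepsilon|x-x_0|$ for $x\in B_{r_0(\sigma_k)}(x_0)\cap\overline{\{u>0\}}$. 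The $V_n$ condition follows directly since $V_n^{r_0(\sigma)}(t,x)\geq\eta$ already implies $V_n(t,x)>0$.

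With the identity in hand, the final assertion is obtained by three successive applications of continuity of $\mathcal H^{d-1}$, all valid because $\mathcal H^{d-1}(\partial\Omega(u(t)))<\infty$ by \lref{initial-fb-regularity}. The outer union over $\eta\downarrow 0$ is monotone increasing, so $\mathcal H^{d-1}(\Gamma^+(u,t))>0$ forces some $\eta>0$ with $\mathcal H^{d-1}\bigl(\bigcap_\sigma\bigcup_{r_0}\Gamma^+_{\eta,\sigma,r_0}\bigr)>0$. The intersection over $\sigma\downarrow 0$ is monotone decreasing (with finite reference measure), so for this $\eta$ and every $\sigma>0$ we get $\mathcal H^{d-1}\bigl(\bigcup_{r_0}\Gamma^+_{\eta,\sigma,r_0}\bigr)>0$. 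The inner union over $r_0\downarrow 0$ is monotone increasing, so finally there is $r_0=r_0(\sigma)>0$ with $\mathcal H^{d-1}\bigl(\Gamma^+_{\eta,\sigma,r_0}\bigr)>0$, as desired.
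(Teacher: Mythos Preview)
Your proposal is correct and follows the same line as the paper for the monotonicity, the inclusion $\Gamma^+(u,t)\subset\bigcup_\eta\bigcap_\sigma\bigcup_{r_0}\Gamma^+_{\eta,\sigma,r_0}$, and the measure argument via continuity of $\mathcal H^{d-1}$ along monotone families.

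The one genuine difference is that you also supply the reverse inclusion $\supset$ via a Bolzano--Weierstrass argument on the $p_\sigma$, whereas the paper's proof only checks the forward inclusion $\subset$ and then invokes monotone convergence. The paper implicitly relies on the fact that only $\subset$ is needed for the positive-measure conclusion (since $\mathcal H^{d-1}(\Gamma^+)>0$ then forces positive measure of the right-hand side, and the monotone structure does the rest). Your compactness argument is the natural way to complete the claimed set equality as stated; it is a useful addition, though not required for the downstream application in \lref{barrier-construction}.
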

\begin{proof}

The monotonicities follow from the set definition.
Let us check the set formula. Let $x \in \Gamma^+(u,t)$, which means
\[V_n(t,x) > 0 \ \hbox{ and there is $p \in D_+u(t,x)$ with } \ |p|^2 < 1+\mu_+.\]
 By definition of $D_+u$
\[u(t,x) \leq p \cdot x + o(|x-x_0|) \ \hbox{ for } \ x \in \overline{\Omega(u(t))}\]
and so for every $\sigma>0$ there is $r_1(\sigma)>0$ sufficiently small so that
\[u(t,x) \leq p \cdot x + \sigma |x-x_0| \ \hbox{ for } \ x \in \overline{\Omega(u(t))} \cap B_{r_1}(x)\]
Namely $p \in D_+^{\sigma,r_1}u(t,x)$. By the definition of $V_n$
\[V^{r_2}_n(t,x) \geq \eta_2 > 0 \ \hbox{ for some } \ r_2,\eta_2>0. \]
Call $\eta_1 := (1+\mu_+) - |p|^2 >0$. Taking $r_0 = \min\{r_1,r_2\}$ and $\eta = \min\{\eta_1,\eta_2\}$ we find that $x \in \Gamma^+_{\eta,\sigma,r_0}(u,t)$.
\end{proof}

\begin{proof}[Proof of \lref{barrier-construction}]
 By \lref{monotone-convergence} and the hypothesis of the Lemma there is $\eta>0$ such that for all $\sigma>0$ there is positive $r_0(\sigma)$ with
 \begin{equation}\label{e.positive-measure-eta-sigma-r0}
 \mathcal{H}^{d-1}(\Gamma^+_{\eta,\sigma,r_0}(u,t_0)) \geq \frac{1}{2}\mathcal{H}^{d-1}(\Gamma^+(u,t_0))>0.
 \end{equation}
  We will take $\sigma = c(d)\eta$, where the dimensional constant $c(d)$ will be determined in the course of the proof (under \eqref{e.a1-a1-cond1}). This choice of $\sigma$ also fixes $r_0 = r_0(d,\eta)>0$, we will usually drop the dependence on the dimension when it is not important and write (for example) $r_0(\eta)$. Let us consider $\delta \in (0, r_0(\eta)]$ and $t_{-1} := t_0 - \delta$ for the rest of the proof.   By monotonicity of the sets $\Gamma^+_{\eta,\sigma,r_0}(u,t_0)$ in $\eta$, we may assume that $\eta < \frac{1}{2}$.

We outline the proof, which is divided into three steps.  In the first step we use the speed and slope condition at a single point to construct a strict supersolution replacement in a local neighborhood which is below $u(t_0)$ by $O(\delta^d)$ in measure but still above $u(t_{-1})$.   The construction is fairly standard, using the first order asymptotic information from the quantified superdifferential, sliding the linearization inward while also bending up in the tangential directions and bending down in the normal direction in order to create a local superharmonic perturbation. In the second step we perform a typical Vitali-type covering argument.  In the final step we perform the local supersolution replacement in each disjoint ball from the covering to create the perturbation $u_\delta$, which is now $O(\delta)$ below $u(t_0)$ because we have shifted inward by $O(\delta)$ on a set of positive surface measure.

{\bf Step 1.} (Construction of a barrier based on single reference point)  In order to make certain parameter choices more clear in the proof below we will adopt the notation $\eta_t = \eta_x = \eta$ and use $\eta_t$ when the lower bound of the velocity is being used, and $\eta_x$ when the lower bound of the slope condition is being used. Let $x_0\in \Gamma^+_{\eta,\sigma,r_0}(u,t_0)$. For simplicity we may assume that  $x_0 = 0$ and $p_0 = \alpha e_d$ for some $\alpha \geq 0$, so that
\begin{equation}\label{e.u-upper-exp-alpha}
u(t_0,x) \leq \alpha x_d + \sigma|x| \ \hbox{ in } \ B_{r_0}(0) \cap \overline{\Omega(t_0)}
\end{equation}
and, since we can always increase $\alpha$ if necessary, we may assume that
\[\alpha  = (1+\mu_+)^{1/2} -\eta_{x}.\]
  Note that, since $u$ is nonnegative,  \eref{u-upper-exp-alpha} yields that $\alpha (x_d)_- \leq \sigma r \hbox{ on } \ B_{r}(0) \cap \overline{\Omega(t_0)}$, namely
 \begin{equation}\label{e.xd-neg-part-bd}
B_{r}(0) \cap \overline{\Omega(t_0)} \subset \{x_d \geq - \frac{\sigma}{\alpha}r\} \ \hbox{ for all } \ r \leq r_0.
\end{equation}

Furthermore, again from the definition of $\Gamma^+_{\eta,\sigma,r_0}(u,t_0)$, 
\begin{equation}\label{e.positive-speed-outcome}
B_{\eta_t(t_0-t)}(0) \subset \R^d \setminus \Omega(t) \ \hbox{ for } \ t \geq t_0 - r_0.
\end{equation}
Recall that we assumed that $\delta = t_0 - t_{-1}$ is smaller than $r_0(\eta)>0$.

Our perturbation will be performed in $e_d$-aligned cylinders which are anisotropic, slightly wider in the tangential directions than the normal direction. Denoting $x' = (x_1,\dots,x_{d-1})$ and $B_r'$ the ball of radius $r$ in $\R^{d-1}$,
\[\textup{Cyl}_{r}  = B_r' \times (-c_dr,c_dr).\]
 The dimensional constant $c_d := \frac{1}{\sqrt{2(d-1)}}$ is chosen so that on the side boundary of the cylinder $\partial_{side}\textup{Cyl}_{r}:= \partial B_{r} \times (-c_dr,c_dr)$
 \begin{equation}\label{e.cylinder-fatness-choice}
 |x'|^2 - (d-1)x_d^2 \geq r^2 - (d-1)c_d^2 r^2 \geq \frac{1}{2}r^2 \ \hbox{ on } \ x \in \partial_{side}\textup{Cyl}_{r}.
 \end{equation}
 We also call 
 \[\partial_{top}\textup{Cyl}_{r}:=  B_{r} \times \{x_d = c_dr\} \ \hbox{ and } \  \partial_{bot}\textup{Cyl}_{r}:=  B_{r} \times \{x_d = -c_dr\}.\]
 Obviously $\textup{Cyl}_r \subset B_{r_0}$ for $r \leq r_0/\sqrt{2}$.

  We aim to make a localized perturbation in $\textup{Cyl}_{\frac{1}{\sqrt{2}}\eta_t \delta}$. Note that this cylinder is chosen since, by \eref{positive-speed-outcome},
  \begin{equation}\label{e.positive-speed-outcome2}
  \textup{Cyl}_{\frac{1}{\sqrt{2}}\eta_t \delta} \subset B_{\eta_t \delta} \subset \R^d \setminus \Omega(t_{-1}).
  \end{equation}
 Since we assume $\delta = t_0 - t_{-1} \leq r_0$ and $\eta<1$, so $\eta_t \delta \leq r_0$ as well.

   We shift the plane $\alpha x_d$ inward by $O(\delta)$ in the normal direction $e_d$ while also bending upwards in the tangential directions so that the barrier is below $u(t_0)$ but above $u(t_{-1})$.

   More precisely, define the barrier
\begin{equation}
 \psi_{\delta}(x) := (\alpha+\tfrac{\eta_x}{4}) (x_d - a_1\eta_t\delta ) + a_2(\eta_t\delta)^{-1}(|x'|^2 - (d-1)x_d^2).
 \end{equation}
 We will see that if we choose $a_i := c(d,\mu_+) \eta_x, i=1,2$ then we can guarantee the following important properties of $\psi_\delta$:
 \begin{enumerate}[label = (\roman*)]
 \item\label{part.harmonic-parabola} $\psi_\delta$ is harmonic;
 \item\label{part.bends-up} $\psi_\delta(x) > u(t_0,x)$ on  $\partial \textup{Cyl}_{\frac{1}{\sqrt{2}}\eta_t \delta} \cap \overline{\Omega(t_0)}$;
\item\label{part.gradient-bound} $|\grad \psi_\delta(x)|^2 \leq 1 + \mu_+ - \tfrac{1}{2}\eta_x \ \hbox{ in } \textup{Cyl}_{\frac{1}{\sqrt{2}}\eta_t \delta} $;
\item\label{part.exterior-ball} There is $c>0$ depending on $d$ and $\mu_+$ such that 
\[B_{c\eta_x\eta_t\delta}(0) \subset \{\psi_\delta \leq 0\}.\]
 \end{enumerate}
 
 Property \partref{harmonic-parabola}. This is straightforward, but we do emphasize that the superharmonicity is important and needed later to deal with the energy computation in Step 3 below.   

Property \partref{bends-up}. We check the bottom, sides and top of the cylinder separately.  On the top boundary $x \in \partial_{top}\textup{Cyl}_{\frac{1}{\sqrt{2}}\eta_t\delta}$ we can use the extra room on the slope: \begin{align*}
\psi_\delta(x) &\geq \alpha x_d + c(d)\eta_x\eta_t \delta - C(d)(a_1+a_2)\eta_t \delta\\
&\geq \alpha x_d + c(d)[\eta_x - C(d)(a_1+a_2)]\eta_t \delta.
\end{align*}
Thus if we take 
\begin{equation}\label{e.a1-a1-cond1}
a_1 +a_2 \leq c(d)\eta_x
\end{equation} then, again on $x \in \partial_{top}\textup{Cyl}_{\frac{1}{\sqrt{2}}\eta_t\delta}$,
\begin{align*}
\psi_\delta(x) &\geq \alpha x_d + c(d)\eta_x \eta_t \delta \\
&\geq \alpha x_d+c(d)\eta_x |x|  \geq \alpha x_d + \sigma |x| \geq u(t_0,x).
\end{align*}
where this computation fixes the choice of the dimensional constant in $\sigma = c(d)\eta_x$.

On the sides $\partial_{side}\textup{Cyl}_{\frac{1}{\sqrt{2}}\eta_t\delta} \cap \overline{\Omega(t_0)}$ we can use the upwards bending in the tangential directions. Using \eref{cylinder-fatness-choice}, we have.
\begin{align*}
\psi_\delta(x) &\geq \alpha x_d -\tfrac{1}{4}\eta_x (x_d)_--(1+\mu_+)^{1/2}a_1\eta_t\delta + \frac{1}{2}a_2\eta_t \delta  \\
&= \alpha x_d + \sigma |x| +\underbrace{ \left[\tfrac{1}{2}a_2\eta_t \delta - \sigma |x|-\tfrac{1}{4}\eta_x (x_d)_- - (1+\mu_+)^{1/2}a_1\eta_t \delta \right] }_\text{$:=A$}.
\end{align*}
 \eref{xd-neg-part-bd} yields that 
  \[ A \geq c\left[a_2 - C(d)\sigma - C(\mu_+,d)a_1 \right] \eta_t \delta \hbox{ on }  x \in \partial_{side}\textup{Cyl}_{\frac{1}{\sqrt{2}}\eta_t\delta} \cap \overline{\Omega(t_0)}.
   \]
We want this to be non-negative, so along with condition \eref{a1-a1-cond1}  we need to choose $\sigma$, $a_1$ and $a_2$ so that
\[a_1 + a_2 \leq c(d)\eta_x \ \hbox{ and } \ C(d)\sigma + C(\mu_+,d)a_1 \leq a_2.\]
To satisfy both inequalities, we  choose $\sigma = c(d) \eta_x$, $a_1 = c(d,\mu_+) \eta_x$ and $a_2 = c(d)\eta_x$.  Applying these choices of parameters, we find
\[\psi_\delta(x) \geq \alpha x_d + \sigma |x| \geq u(t_0,x) \ \hbox{ on } \ \partial_{side}\textup{Cyl}_{\frac{1}{\sqrt{2}}\eta_t\delta} \cap \overline{\Omega(t_0)}.\]

Finally, the bottom boundary $\partial_{bot}  \textup{Cyl}_{\frac{1}{\sqrt{2}}\eta_t\delta}$ is compactly contained in the zero level set of $u(t_0)$. Namely the set $(x_d)_- = c_d\eta_t \delta$ does not intersect $\overline{\Omega(t_0)}$ by \eref{xd-neg-part-bd} and fixing the choice of dimensional constant in the original specification $\sigma = c(d) \eta$.

Property \partref{gradient-bound}. For this we just compute the gradient and bound with triangle inequality \begin{align*}
|\grad \psi_\delta- (\alpha + \tfrac{1}{4}\eta_x)e_d|  \leq C(d)a_2 \quad \hbox{ in } \textup{Cyl}_{\frac{1}{\sqrt{2}}\eta_t\delta}.
\end{align*}
Choosing the dimensional constant in $a_2 = c(d)\eta_x$ smaller if necessary, 
\[|\grad \psi_\delta| \leq \alpha + \tfrac{1}{2}\eta_x \leq (1+\mu_+)^{1/2} - \tfrac{1}{2}\eta_x \quad \hbox{ in } \textup{Cyl}_{\frac{1}{\sqrt{2}}\eta_t\delta}.    \]

Property \partref{exterior-ball}.    Note that  $\psi_\delta$ is monotone in $\textup{Cyl}_{\frac{1}{\sqrt{2}}\eta_t\delta}$ on the cone of directions, namely for the directions $e \in S^{d-1}$ such that   $e \cdot e_d \geq \frac{1}{2} \geq C(d)a_2$.
Since  $\psi_\delta(a_1\eta_t\delta e_d) = 0$ we find that
\[B_{2a_1\eta_t\delta e_d}(0) \subset \{\psi_\delta \leq 0\}.\]

{\bf Step 2.} (Covering setup) Since Hausdorff measure is inner regular, there is a compact subset $K \subset \Gamma^+_{\eta,\sigma,r_0}(u,t_0)$ with
\[\mathcal{H}^{d-1}(K) \geq \frac{1}{2}\mathcal{H}^{d-1}(\Gamma^+_{\eta,\sigma,r_0}(u,t_0)) \geq \frac{1}{4}\mathcal{H}^{d-1}(\Gamma^+(u,t_0)).\]
Since $K$ is compact, there is $r_1>0$ such that
\begin{equation}\label{e.hausdorff-def}\inf\{ \sum_{j=1}^\infty \rho_j^{d-1} : K \subset \bigcup_{j=1}^\infty B_{\rho_j}(x_j), \ \rho_j \leq r_1\} \geq \frac{1}{2}\mathcal{H}^{d-1}(K).
\end{equation}

Let $3\eta_t\delta \leq r_1$.  By the Vitali covering lemma there is a finite collection of disjoint balls $B_j$ of radius $\eta_t\delta$ centered at points $x_j \in K \subset \partial \Omega(t_0)$ with
\[K \subset \bigcup_j 3B_j.\]
Call $\tilde{B}_j = B_{c\eta_x\eta_t \delta}(x_j)$ with constant $1>c(d,\mu_+)>0$ from property \partref{exterior-ball} above. Since $\tilde{B}_j \subset B_j$ they are a disjoint collection as well.  By \eref{hausdorff-def}
\[\sum_j 3^{d-1} \delta^{d-1} \geq \frac{1}{2}\mathcal{H}^{d-1}(K).\]
By measure non-degeneracy of the free boundary \lref{initial-fb-regularity}
\[|\Omega(t_0) \cap \tilde{B}_j| \geq c \eta^{-2d}\delta^{d},\]
where the constant $c$ depends on $d$ and $\mu_+$.  

Since $\tilde{B}_j's$ are disjoint, we obtain the following by absorbing universal constants into $c$:
\[
|\Omega(t_0) \cap \bigcup_j \tilde{B}_{j}| = \sum_j |\Omega(t_0) \cap \tilde{B}_{j}| \geq \sum_j c \eta^{-2d}\delta^{d} \geq c\eta^{-2d}\mathcal{H}^{d-1}(K) \delta,
\]
or,
\begin{equation}\label{e.measure-diff-lower-bound}
|\Omega(t_0) \cap \bigcup_j \tilde{B}_{j}|  \geq c\eta^{-2d}\mathcal{H}^{d-1}(\Gamma^+(u,t_0)) \delta.
\end{equation}

{\bf Step 3.} (Construction of the comparison functions) 
Let $\delta \leq \min\{ (3\eta_t)^{-1}r_1,r_0\}$ so that both Step 1 and Step 2 apply, recall $r_1$ was fixed by \eref{hausdorff-def} and $r_0(\eta)$ was fixed at the beginning of the proof below \eref{positive-measure-eta-sigma-r0}. Now for each $x_j \in \Gamma^+(u,t_0)$ from the covering constructed in Step 2, call $\psi_{x_j,\delta}$ to be the single-point barrier function constructed in Step 1. The $\psi_{x_j,\delta}$ are defined in $B_j$, extend them to be equal to $+\infty$ outside of their respective $B_j$. 

Consider the comparison function
\[ u_\delta(x)  = 
u(t_0,x) \wedge \bigwedge_{\substack{j}}\psi_{x_j,\delta}(x) \ \hbox{ defined for } \ x \in \Omega(t_0). \]
By properties \partref{harmonic-parabola} and \partref{bends-up}, and by the disjointness of the $B_j$, we can conclude that $u_\delta$ is Lipschitz continuous and superharmonic in $\Omega(t_0)$.    

  Now by property \partref{exterior-ball} (first containment below), property \partref{bends-up} (second containment below), and \eref{positive-speed-outcome2} (third containment below), applied for each $j$
\begin{equation}\label{e.strict-outward-motion}
\bigcup_{j} \tilde{B}_j\cap \Omega(u(t_0)) \subset \Omega(u(t_0)) \setminus \Omega(u_\delta) \subset \bigcup_{j} B_j \subset \R^d \setminus \Omega(u(t_{-1})).
\end{equation}
Thus we conclude that
\[\Omega(u(t_{-1})) \subset \Omega(u_\delta) \subset \Omega(u(t_0))\]
  and also, by \eref{measure-diff-lower-bound},
\[
|\Omega(u(t_0)) \setminus \Omega(u_\delta)| \geq \sum_j |\tilde{B}_j \cap \Omega(u(t_0))| \geq c\eta^{-2d} \mathcal{H}^{d-1}(\Gamma^+(u,t_0)) \delta. 
\]
The previous two displayed equations establish \eref{perturbation-properties}, the first claim of the Lemma.

To conclude we still need to establish the inequality on the difference of the Dirichlet energy \eref{dirichlet-energy-barrier-ineq}. Recall that $u_\delta$ is superharmonic and Lipschitz continuous in $\Omega(t_0)$, so we can apply \lref{intbyparts2} with $v_0 = u_\delta$ and $v_1 = u(t_0)$.  This results in
\begin{align*}
\int_{\Omega(u_\delta)} |\grad u_\delta|^2 \ dx - \int_{\Omega(t_0)} |\grad u(t_0)|^2 \ dx &\leq \int_{\Omega(t_0) \setminus \Omega(u_\delta)}|D{u_\delta}|^2 \ dx\\
&\leq\int_{\Omega(t_0) \setminus \Omega(u_\delta)}(1+\mu_+ - \tfrac{1}{2}\eta_x) \ dx
\end{align*}
where we applied property \partref{gradient-bound} for the second inequality.  This proves \eref{dirichlet-energy-barrier-ineq} and finishes the proof of the Lemma.
\end{proof}

\section{Limit of the minimizing movement scheme} \label{s.energy-soln-existence} % sec 6--6.1

This section is dedicated to the proof of \tref{existence}. We will study the minimizing movements scheme introduced in \eref{minimizing-movement-scheme}, which we recall here for convenience:
\begin{equation*}\label{e.minimizing-movement-scheme-2}
u_{\delta}^k \in \mathop{\textup{argmin}} \left\{ \mathcal{J}(w) + \Diss(u^{k-1}_\delta, w): w\in F(k\delta)+  H^1_0(U)\right\}.\end{equation*}
Then interpolate discontinuously to define, for all $t \in [0,T]$,
 \[u_{\delta}(t):= u_{\delta}^k \ \hbox{ and } \ F_\delta(t) = F(k\delta) \ \hbox{ if } \ t\in [k\delta, (k+1)\delta).\]
  We will analyze the behavior of the minimizing movements scheme as $\delta \to 0$.

We apply established ideas \cite{alberti2011,MainikMielke} to show that, up to a subsequence, the $u_\delta$ converge pointwise in time to an energy solution \EE{}. Our argument will be quite similar to that in Alberti and DeSimone \cite[Section 4]{alberti2011} which was itself inspired by Mainik and Mielke \cite{MainikMielke}.  The conclusion of these arguments is that, up to a subsequence, the $u_\delta(t)$ and $\Omega(u_\delta(t))$ converge pointwise \emph{everywhere} in time in uniform / Hausdorff distance norm in space to a (global) energetic solution.

\begin{proof}[Proof of \tref{existence}] We start by proving uniform (in $\delta>0$) BV-in-time bounds for the minimizing movements scheme via a discrete Gr\"onwall type argument similar to \lref{energy-soln-time-reg}.  This establishes the necessary compactness to show that, up to a subsequence, the $u_\delta(t)$ and $\Omega(u_\delta(t))$ converge at \emph{every} time in uniform / Hausdorff distance norm to a (global) energetic solution.    The convergence at \emph{every} time, which follows from an application of Helly's selection theorem, is a key detail to establish the energy solution property.  

  The proof will use many of the same ideas that were developed in \sref{basic-energy} in the continuous time case, so we will refer to the relevant arguments above in many places to significantly shorten the presentation.

{\bf Part \partref{existence-BV-bounds}.} A discrete version of the energy dissipation inequality will play a central role in the proof.  We claim that for each $k \geq 1$
\begin{equation}\label{e.one-step-discrete-diss-ineq}
\Diss(u_{\delta}^{k-1}, u_{\delta}^k) \leq \int_{(k-1)\delta}^{k\delta} 2 (1+g_\delta(t))\dot{F}(t)P_{\delta}(t) \ dt  + \mathcal{J}(u^{k-1}_{\delta}) - \mathcal{J}(u^k_{\delta})
\end{equation}
where the error term in the integrand has
\begin{equation}\label{e.gdelta-bd}
|g_\delta(t)| \leq \exp(\|(\log F)'\|_\infty \delta) - 1 \to 0 \ \hbox{ uniformly as $\delta \to 0$,}
\end{equation}
and we used the notation $P_{\delta}(t):= \int _{\partial U } \frac{\partial  u^{[t/\delta]}_{\delta} }{\partial n} dS$.

To prove this  by using the minimizing scheme we build a comparable version of $u^{k-1}_{\delta}$ to $u^k _{\delta}$, by defining $\tilde{u}:= \frac{F(k\delta)}{F((k-1)\delta)} u^{k-1}_{\delta}$. Then we have 
\[
\Diss(u_{\delta}^{k-1}, u^k_{\delta}) + \mathcal{J}(u^k_{\delta}) \leq \Diss (u_{\delta}^{k-1}, \tilde{u}) +\mathcal{J}(\tilde{u})=\mathcal{J}(\tilde{u}) = \mathcal{J}(\tilde{u}) - \mathcal{J}(u_\delta^{k-1}) + \mathcal{J}(u_\delta^{k-1}). 
\]
By integration by parts we can rewrite
\begin{align*}
\mathcal{J}(\tilde{u}) - \mathcal{J}(u^{k-1}_{\delta}) &= \frac{F_k^2 - F_{k-1}^2}{F_{k-1}}\int_{\partial U } \frac{\partial  u^{k-1}_{\delta}}{\partial n} dS\\ &= \int_{(k-1)\delta}^{k\delta} 2 \frac{F(t)}{F_{k-1}} \dot{F}(t)P_{\delta}(t) \ dt.
\end{align*}
Then define $g_\delta(t):= \frac{F(t)}{F_{k-1}} - 1$ for $[t] = k-1$ which satisfies \eref{gdelta-bd} by fundamental theorem of calculus and the inequality $1-e^{-k} \leq e^k-1$ for $k>0$.  That completes our proof of \eref{one-step-discrete-diss-ineq}.

Next we sum up the one-step dissipation inequality \eref{one-step-discrete-diss-ineq} to get, for any $0 \leq t_0 \leq t_1 \leq T$,
\begin{equation}\label{e.full-discrete-diss-ineq}
\Dissbar(u_\delta(t);[t_0,t_1]) \leq \int_{t_0}^{t_1} 2(1+g_\delta(t))\dot{F}(t) P_{\delta}(t) dt  + \mathcal{J}(u_\delta(t_1)) - \mathcal{J}( u_\delta(t_0)) + C\delta
\end{equation} 
where $C\delta$ accounts for the error from $t_0$ and $t_1$ not necessarily being integer multiples of $\delta$ and we are using the uniform Lipschitz bound of $u_\delta(t)$ from \lref{initial-fb-regularity}.

From \eref{full-discrete-diss-ineq} we can derive uniform $BV([0,T];L^1(U) \times \R \times \R)$ bounds on the map $t \mapsto (\chi_{\Omega_{\delta}}(t),P_\delta(t),\mathcal{J}_\delta(t))$ by the same arguments as in \lref{energy-soln-time-reg}.

{\bf Part \partref{existence-helly}.} Due to the above compactness, Helly's selection principle \cite[Theorem 5.1]{alberti2011} yields that, along a subsequence, 
\[(\chi_{\Omega_{\delta}}(t),P_\delta(t),\mathcal{J}_\delta(t)) \to (\chi_{\Omega(t)},P(t),\mathcal{J}(t)) \ \hbox{ in } \ L^1(U) \times \R \times \R \ \hbox{ for \emph{every}} \ t \in [0,T].\] Then arguing as in the proof of part \partref{lr-limits} of \pref{lr-limits} we can also show that $u_\delta(t) \to u(t)$ uniformly in $U$ for each $t \in [0,T]$ and we can show the consistency of the limits 
\[\Omega(t) = \Omega(u(t)), \ \mathcal{J}(t) = \mathcal{J}(u(t)), \ \hbox{ and } \ P(t) = P(u(t)).\]
  Furthermore the convergence of $\Omega_\delta(t) \to \Omega(t)$ can be upgraded to Hausdorff distance convergence using the uniform convergence of the $u_\delta(t) \to u(t)$ and the uniform non-degeneracy of $u_\delta(t)$.

{\bf Part \partref{existence-solution-props}.} Now let us show that $u(t)$ is an energy solution. To show the stability property, observe that for any $v\in u(t) + H_0^1(U)$ we can take $k= [t/\delta]$ and compare $u_\delta(t) = u_{\delta}^k$ with $\tilde{v}:= \frac{F(\delta k)}{F(t)} v(t)$, so that
\[
\Diss(u_{\delta}^{k-1}, u^k_{\delta}) + \mathcal{J}(u^k_{\delta}) \leq \Diss (u_{\delta}^{k-1}, \tilde{v}) +\mathcal{J}(\tilde{v}).
\]
Note that, due to the possibility of a jump in the limit we do not necessarily know that $\Diss(u_{\delta}^{k-1}, u^k_{\delta})  \to 0$.  So to get everything in terms of $u_\delta(t) = u^k_\delta$ we apply the triangle inequality \lref{diss-triangle-eq}
\begin{align*}
\mathcal{J}(u_\delta(t)) = \mathcal{J}(u^k_{\delta}) &\leq \Diss (u_{\delta}^{k-1}, \tilde{v}) - \Diss(u_{\delta}^{k-1}, u^k_{\delta}) +\mathcal{J}(\tilde{v})\\
&\leq \Diss (u^k_{\delta},\tilde{v})+\mathcal{J}(\tilde{v})\\
&=\Diss (u_\delta(t),\tilde{v})+\mathcal{J}(\tilde{v}).
\end{align*}
Now sending $\delta\to 0$  along the convergent subsequence yields the desired stability inequality.

Last we show the energy dissipation inequality. For any $0 \leq t_0 < t_1 \leq T$ we can apply \eref{full-discrete-diss-ineq} to find
\[
\Diss(u_\delta(t_0),u_\delta(t_1))  \leq \int_{t_0}^{t_1} 2(1+g_\delta(t))\dot{F}(t) P_{\delta}(t) dt  + \mathcal{J}(u_\delta(t_1)) - \mathcal{J}( u_\delta(t_0))+ C\delta.
\]
Using the consistency results from part \partref{existence-helly} above and sending $\delta \to 0$ gives the dissipation inequality for $u$. The convergence of the left hand side just follows from the $L^1$ convergence of the indicator functions, and the convergence of the integral is by dominated convergence theorem.
\end{proof}

\appendix

\section{Technical results}

\subsection{Energy difference quotient formulae}
We give and prove several formulas for the difference of the Dirichlet energy under varying positivity set.  We have not seen these particular formulas before in the literature, but they provide a very convenient way to go from energy minimization to viscosity solution properties.

   \begin{lemma}\label{l.intbyparts2}
    Suppose that $v_0,v_1 \in H^1(U)$ with $v_0 \leq v_1$ and $v_0 = v_1 \geq 0$ on $\partial U$.  Call $\Omega_j = \{v_j>0\} \cap U$.
    \begin{itemize}
    \item If $v_1$ is subharmonic in $\Omega_1$ then
    \begin{equation}\label{e.intbyparts2-subsoln} \int_{\Omega_0} |\grad v_0|^2 \ dx- \int_{\Omega_1} |\grad v_1|^2 \ dx \geq \int_{\Omega_1 \setminus \Omega_0}|\nabla v_1|^2 \ dx.\end{equation}
    \item If $v_0$ is superharmonic in $\Omega_1$ then
    \begin{equation}\label{e.intbyparts2-supersoln} \int_{\Omega_0} |\grad v_0|^2 \ dx - \int_{\Omega_1} |\grad v_1|^2 \ dx \leq  \int_{\Omega_1 \setminus \Omega_0}|\nabla {v_0}|^2 \ dx
    \end{equation}
    \end{itemize}
    \end{lemma}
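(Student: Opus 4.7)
The plan is to reduce both inequalities to a single application of the weak form of sub/superharmonicity tested against the function
\[
\phi := v_1 - v_0^+ \in H^1(U),
\]
where $v_0^+ = \max(v_0, 0)$. First I would verify $\phi \ge 0$ a.e. in $U$: on $\{v_0 \ge 0\}$ this is $v_0 \le v_1$; on $\{v_0 < 0\}$ we have $v_0^+ = 0 \le v_1$, using nonnegativity of $v_1$ (implicit in the statement, and automatic in the applications, where $v_1$ arises as a nonnegative solution of a free boundary problem). Second I would check $\phi \in H^1_0(\Omega_1)$, i.e.\ that $\phi = 0$ a.e.\ on $U \setminus \Omega_1$ and on $\partial U$: on $U \setminus \Omega_1$ one has $v_1 \le 0$, hence $v_1 = 0$, which forces $v_0 \le 0$ and therefore $v_0^+ = 0$; on $\partial U$ the boundary hypothesis gives $v_0^+ = v_0 = v_1$, so $\phi = 0$ as a trace. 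A useful computation for what follows is the chain-rule identity $\nabla v_0^+ = \nabla v_0 \,\mathbf{1}_{\{v_0 > 0\}}$ a.e., which gives $\int_{\Omega_1} \nabla v_0 \cdot \nabla v_0^+ \,dx = \int_{\Omega_0} |\nabla v_0|^2 \,dx$ and, symmetrically, $\int_{\Omega_1} \nabla v_1 \cdot \nabla v_0^+ \,dx = \int_{\Omega_0} \nabla v_0 \cdot \nabla v_1 \,dx$.

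For \eqref{e.intbyparts2-supersoln}, I would apply the weak form of $-\Delta v_0 \ge 0$ in $\Omega_1$ against $\phi \ge 0$ to get
\[
0 \le \int_{\Omega_1} \nabla v_0 \cdot \nabla \phi \, dx = \int_{\Omega_1} \nabla v_0 \cdot \nabla v_1 \, dx - \int_{\Omega_0} |\nabla v_0|^2 \, dx.
\]
Combining this with the trivial expansion $0 \le \int_{\Omega_1} |\nabla(v_1 - v_0)|^2 \, dx$, which rearranges to $2 \int_{\Omega_1} \nabla v_0 \cdot \nabla v_1 \, dx \le \int_{\Omega_1} |\nabla v_0|^2\, dx + \int_{\Omega_1} |\nabla v_1|^2\, dx$, and splitting $\int_{\Omega_1} |\nabla v_0|^2 = \int_{\Omega_0} |\nabla v_0|^2 + \int_{\Omega_1 \setminus \Omega_0} |\nabla v_0|^2$, a short cancellation yields the stated bound. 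Part \eqref{e.intbyparts2-subsoln} is the mirror image: test the weak form of $\Delta v_1 \ge 0$ against the same $\phi \ge 0$ to deduce $\int_{\Omega_1} |\nabla v_1|^2 \,dx \le \int_{\Omega_0} \nabla v_0 \cdot \nabla v_1\, dx$, then combine with $0 \le \int_{\Omega_0} |\nabla(v_1 - v_0)|^2 \, dx$ and the split $\int_{\Omega_1} |\nabla v_1|^2 = \int_{\Omega_0} |\nabla v_1|^2 + \int_{\Omega_1 \setminus \Omega_0} |\nabla v_1|^2$.

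The main obstacle is the verification that $\phi \in H^1_0(\Omega_1)$ with the correct sign, in particular that $v_0^+$ vanishes on $\partial \Omega_1 \cap U$; this is where the monotonicity $v_0 \le v_1$ together with the nonnegativity of $v_1$ enter in an essential way. Once $\phi$ is legitimized, the rest is a weak-form divergence computation paired with the nonnegativity of $|\nabla(v_1 - v_0)|^2$, and the argument sidesteps any need for regularity of $\partial \Omega_1$ or classical integration by parts across rough interfaces.
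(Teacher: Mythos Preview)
Your proposal is correct and follows essentially the same approach as the paper. Both arguments hinge on the same test function $\phi = v_1 - (v_0)_+ \in H^1_0(\Omega_1)$, $\phi \geq 0$, fed into the weak form of sub/superharmonicity, together with the nonnegativity of $\int |\nabla(v_1-v_0)|^2$ (over $\Omega_0$ for \eqref{e.intbyparts2-subsoln}, over $\Omega_1$ for \eqref{e.intbyparts2-supersoln}); the only difference is the order in which these ingredients are combined, and you are somewhat more explicit than the paper in checking admissibility of $\phi$ and in flagging the implicit nonnegativity assumption on $v_1$.
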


\begin{proof}
 
For \eref{intbyparts2-subsoln} we start with
\begin{align*}
\int_{\Omega_0} |\grad v_0|^2 \ dx - \int_{\Omega_1} |\grad v_1|^2 \ dx &= \int_{\Omega_0 } |\grad v_0|^2 - |\grad v_1|^2 \ dx + \int_{\Omega_1 \setminus \Omega_0} - |\grad v_1|^2 \ dx
\end{align*}
Then we compute the first term on the right
\begin{align*}
 \int_{\Omega_0} |\grad v_0|^2 - |\grad v_1|^2 \ dx &=  \int_{\Omega_0 } |\grad v_1+\grad (v_0 - v_1)|^2-|\grad v_1|^2  \ dx \\
 &=\int_{\Omega_0 } 2\grad v_1\cdot \grad(v_0 - v_1) + |\grad(v_0-v_1)|^2 \ dx \\
 &\geq \int_{\Omega_0 } 2\grad v_1\cdot \grad(v_0 - v_1) \ dx\\
 &= \int_{\Omega_1} 2\grad v_1\cdot \grad ((v_0)_+ - v_1) \ dx +\int_{\Omega_1 \setminus \Omega_0} 2 |\grad v_1|^2 \ dx\\
 &\geq \int_{\Omega_1 \setminus \Omega_0} 2 |\grad v_1|^2 \ dx.
\end{align*}
The last inequality in the sequence above is is due to $v_1$ being $H^1$ subharmonic in $\Omega_1$. Note that $v_1 - (v_0)_+$ is a valid test function for the subharmonicity condition since it is non-negative in $\Omega_1$ and in $H^1_0(\Omega_1)$.

For \eref{intbyparts2-supersoln} we start with
\begin{align*}
\int_{\Omega_0} |\grad v_0|^2 \ dx - \int_{\Omega_1} |\grad v_1|^2 \ dx &= \int_{\Omega_1 } |\grad v_0|^2 - |\grad v_1|^2 \ dx + \int_{\Omega_1 \setminus \Omega_0} - |\grad v_0|^2 \ dx
\end{align*}
Then we compute the first term on the right
\begin{align*}
 \int_{\Omega_1} |\grad v_0|^2 - |\grad v_1|^2 \ dx &=  \int_{\Omega_1 } |\grad v_0|^2-|\grad v_0+\grad (v_1-v_0)|^2  \ dx \\
 &=\int_{\Omega_1 } -2\grad v_0\cdot \grad(v_1 - v_0) - |\grad(v_1-v_0)|^2 \ dx \\
 &\leq \int_{\Omega_1} -2\grad v_0\cdot \grad(v_1 - (v_0)_+) \ dx +\int_{\Omega_1 \setminus \Omega_0} 2 |\grad v_0|^2 \ dx\\
 &\leq \int_{\Omega_1 \setminus \Omega_0} 2 |\grad v_0|^2 \ dx.
\end{align*}
The last inequality in the sequence above is due to $v_0$ being $H^1$ superharmonic in $\Omega_1$. Note that $v_1 - (v_0)_+$ is a valid test function for the superharmonicity condition since it is  in $H^1_0(\Omega_1)$ and is non-negative in $\Omega_1$.

In the smooth case one can also check the related identities
\begin{align*}
 &\int_{\Omega_0} |\grad v_0|^2 \ dx- \int_{\Omega_1} |\grad v_1|^2 \ dx \\
 & \quad =  \int_{\Omega_1} 2\Delta v_1 (v_1 - (v_0)_+) \ dx +  \int_{\Omega_0}  |\nabla(v_1 - v_0)|^2 \ dx + \int_{\Omega_1 \setminus \Omega_0}|\nabla v_1|^2 \ dx
 \end{align*}
and
 \begin{align*} 
 &\int_{\Omega_0} |\grad v_0|^2 \ dx - \int_{\Omega_1} |\grad v_1|^2 \ dx =  \\
 &\quad \int_{\Omega_1} 2(v_1 - (v_0)_+)\Delta v_0  \ dx-\int_{\Omega_1}  |\grad(v_1 - v_0)|^2 \ dx+\int_{\Omega_1 \setminus \Omega_0}|\grad{v_0}|^2 \ dx.
  \end{align*}

\end{proof}

\subsection{Sharp triangle inequality for the dissipation distance}
Here we give a sharp triangle inequality for the dissipation distance.  The triangle inequality is important for most of the standard theory of rate-independent energetic evolutions, although we remark that it does not seem to generalize well to the general anisotropic case.  The sharp triangle inequality is used in \pref{lr-limits} to show that if an energetic solution jumps multiple times then $u(t)$ must always be in between $u_\ell(t)$ and $u_r(t)$.
\begin{lemma}[Dissipation distance sharp triangle inequality]\label{l.diss-triangle-eq}
Let $A$, $B$, and $C$ be arbitrary finite measure regions in $\R^n$. Then
\[\textup{Diss}(A,B) + \textup{Diss}(B,C) - \textup{Diss}(A,C) = (\mu_-+\mu_+)\bigg[|B \setminus (A\cup C)| + |(A \cap C) \setminus B|\bigg]\]
\end{lemma}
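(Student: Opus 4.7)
The plan is to reduce the identity to a classical fact about symmetric differences by splitting the dissipation distance into symmetric and antisymmetric parts. Writing $\mu_+ = \tfrac{\mu_++\mu_-}{2}+\tfrac{\mu_+-\mu_-}{2}$ and $\mu_- = \tfrac{\mu_++\mu_-}{2}-\tfrac{\mu_+-\mu_-}{2}$, a quick computation gives the decomposition
\[
\textup{Diss}(X,Y) = \tfrac{\mu_++\mu_-}{2}|X\triangle Y| + \tfrac{\mu_+-\mu_-}{2}(|Y|-|X|).
\]
Summing this over the three pairs $(A,B)$, $(B,C)$, $(A,C)$ with the appropriate signs, the antisymmetric (volume-difference) contributions telescope and cancel, so
\[
\textup{Diss}(A,B) + \textup{Diss}(B,C) - \textup{Diss}(A,C) = \tfrac{\mu_++\mu_-}{2}\Bigl(|A\triangle B| + |B\triangle C| - |A\triangle C|\Bigr).
\]

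It then suffices to establish the pointwise identity
\[
\mathbf{1}_{A\triangle B} + \mathbf{1}_{B\triangle C} - \mathbf{1}_{A\triangle C} = 2\,\mathbf{1}_{B\setminus(A\cup C)} + 2\,\mathbf{1}_{(A\cap C)\setminus B},
\]
and then integrate. Using $\mathbf{1}_{X\triangle Y} = x+y-2xy$ for $x = \mathbf{1}_X$, $y = \mathbf{1}_Y$ taking values in $\{0,1\}$, the left side expands to $2b - 2ab - 2bc + 2ac = 2[b(1-a)(1-c) + ac(1-b)]$ with $a = \mathbf{1}_A$, $b=\mathbf{1}_B$, $c=\mathbf{1}_C$, which is exactly the indicator expression on the right. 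Alternatively one can simply check all eight cases $(a,b,c)\in\{0,1\}^3$ by hand.

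There is no substantial obstacle: once one notices the antisymmetric cancellation coming from the fact that we are inverting the sign in front of the middle term $\textup{Diss}(A,C)$ in just the right way, the identity becomes a purely combinatorial statement about set indicators. The only conceptual point worth emphasizing is that the factor $\mu_+ + \mu_-$ on the right appears because both exceptional configurations, $B\setminus (A\cup C)$ and $(A\cap C)\setminus B$, contribute a full advance/recede cycle over the path $A\to B\to C$ that could have been avoided by the direct path $A\to C$.
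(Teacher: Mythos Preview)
Your proof is correct and takes a genuinely different, more conceptual route than the paper. The paper keeps the $\mu_+$ and $\mu_-$ contributions separate throughout, writing the left-hand side as $\int \mu_+[{\bf 1}_{B \setminus A} +{\bf 1}_{C \setminus B}- {\bf 1}_{C \setminus A}]+ \mu_-[{\bf 1}_{A \setminus B}+{\bf 1}_{B \setminus C} - {\bf 1}_{A \setminus C}]\,dx$ and then showing by direct indicator-function algebra that each bracketed expression equals ${\bf 1}_{B\setminus(A\cup C)} + {\bf 1}_{(A\cap C)\setminus B}$. Your decomposition $\textup{Diss}(X,Y) = \tfrac{\mu_++\mu_-}{2}|X\triangle Y| + \tfrac{\mu_+-\mu_-}{2}(|Y|-|X|)$ into symmetric and antisymmetric parts is nicer in that it \emph{explains} why only the sum $\mu_++\mu_-$ appears in the answer: the antisymmetric volume-difference term telescopes away regardless of the sets, so the identity reduces to a statement about the $L^1$ metric on indicator functions. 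The paper's approach, on the other hand, yields slightly more information in passing (the separate $\mu_+$ and $\mu_-$ indicator identities), though that extra information is not used anywhere.
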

\begin{figure}[t]
\begin{tikzpicture}[thin,scale = 0.4]

% Define ellipse coordinates and radii
\def\radiusBS{1}
\def\radiusA{2}
\def\radiusBL{5}
\def\radiusC{3}
\coordinate (A) at (0,0);
\coordinate (BS) at (0,1.5);
\coordinate (BL) at (4,0);
\coordinate (C) at (0,0);

% Calculate intersections

\begin{scope}
    \fill[gray!50] (0,0) ellipse (\radiusBL cm and \radiusBS cm);
    \clip (0,0) ellipse (\radiusC cm and \radiusC cm);
    \fill[white] (0,0) ellipse (\radiusC cm and \radiusC cm);
\end{scope}

\begin{scope}
    \fill[gray!10] (0,0) ellipse (\radiusC cm and \radiusC cm);
    \clip (0,0) ellipse (\radiusA cm and \radiusA cm);
    \fill[white] (0,0) ellipse (\radiusC cm and \radiusC cm);
\end{scope}

\begin{scope}
    \fill[gray!30] (0,0) ellipse (\radiusA cm and \radiusA cm);
    \clip (0,0) ellipse (\radiusA cm and \radiusA cm);
    \fill[white] (0,0) ellipse (\radiusBL cm and \radiusBS cm);
\end{scope}

% Draw ellipses
\draw[draw=black] (0,0) ellipse (\radiusC cm and \radiusC cm);
\draw[draw=black] (0,0) ellipse (\radiusA cm and \radiusA cm);
\draw[draw=black] (0,0) ellipse (\radiusBL cm and \radiusBS cm);

%\node at (A) {$A \cap C$};
%\node at (BS) {$(A \cap C) \setminus B$};
\draw[<-,dotted] (BS) -- ++(2,2) node[right]{$(A \cap C) \setminus B$};
%\node at (BL) {$B \setminus (A\cup C)$};
\draw[<-,dotted] (BL) -- ++(2,0) node[right]{$B \setminus (A \cup C)$};
%\node at (C) {$A \cup C$};

\end{tikzpicture}
\caption{Diagram of the sharp triangle inequality for the dissipation distance. Here $A \cap C$ is the inner circle, $A \cup C$ is the outer circle, and $B$ is the ellipse.}
\label{f.set-theory}
\end{figure}
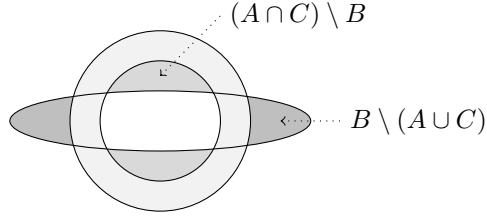
\begin{proof}
The result is purely set algebraic computations, see \fref{set-theory} for the geometric idea. We compute
\begin{align}
&\textup{Diss}(A,B)+\textup{Diss}(B,C) - \textup{Diss}(A,C) \notag\\
&= \int \mu_+{\bf 1}_{B \setminus A} + \mu_-{\bf 1}_{A \setminus B}+\mu_+{\bf 1}_{C \setminus B}+\mu_-{\bf 1}_{B \setminus C} - \mu_+{\bf 1}_{C \setminus A} - \mu_-{\bf 1}_{C \setminus A} \ dx\notag\\
&= \int \mu_+[{\bf 1}_{B \setminus A} +{\bf 1}_{C \setminus B}- {\bf 1}_{C \setminus A}]+ \mu_-[{\bf 1}_{A \setminus B}+{\bf 1}_{B \setminus C} - {\bf 1}_{A \setminus C}]  \ dx. \label{e.expanded-triangle-id}
\end{align}
Now we just compute by force the first term in brackets on the previous line
\begin{align*}
{\bf 1}_{B \setminus A} +{\bf 1}_{C \setminus B}- {\bf 1}_{C \setminus A} & = {\bf 1}_{B} - {\bf 1}_{A \cap B}+{\bf 1}_{C}- {\bf 1}_{ B \cap C}- [{\bf 1}_{C} - {\bf 1}_{ A \cap C}]\\
&  ={\bf 1}_{B}+{\bf 1}_{ A \cap C} - {\bf 1}_{ B \cap C} - {\bf 1}_{ B \cap A}\\
& = {\bf 1}_{B}+{\bf 1}_{ A \cap C}{\bf 1}_{B} - {\bf 1}_{ B \cap C} - {\bf 1}_{ B \cap A} +{\bf 1}_{ A \cap C}(1-{\bf 1}_{B}) \\
&  = {\bf 1}_{B}(1+{\bf 1}_{ A \cap C} - {\bf 1}_{ C} - {\bf 1}_{  A}) + {\bf 1}_{ (A \cap C) \setminus B}\\
& = {\bf 1}_{B}(1-{\bf 1}_{A \cup C})+ {\bf 1}_{ (A \cap C) \setminus B}\\
& = {\bf 1}_{B\setminus (A \cup C)}+ {\bf 1}_{ (A \cap C) \setminus B}.
\end{align*}
The second bracketed term in \eref{expanded-triangle-id} works out quite similarly and is identical after the second step above giving the same result.

Plugging these identities back into \eref{expanded-triangle-id} gives the result.
\end{proof}

\bibliography{monotone-articles}

\end{document}